\theoremstyle{plain}
\newtheorem{Thm}{Theorem}[section]
\newtheorem{Lem}[Thm]{Lemma}
\newtheorem{Prop}[Thm]{Proposition}
\newtheorem{Cor}[Thm]{Corollary}
\newtheorem{Conj}[Thm]{Conjecture}
\theoremstyle{definition}
\newtheorem{Defi}[Thm]{Definition}
\newtheorem{Rem}[Thm]{Remark}
\newtheorem{Ex}[Thm]{Example}
\tikzstyle{vertex}=[circle, draw, inner sep=0pt, minimum size=6pt] 
\newcommand{\vertex}{\node[vertex]}
\newcommand{\RR}{\mathbb{R}} 
\newcommand{\NN}{\mathbb{N}} 
\newcommand{\PPP}{\mathcal{P}} 
\newcommand{\AAA}{\mathcal{A}} 
\newcommand{\CCC}{\mathcal{C}} 
\newcommand{\FFF}{\mathcal{F}} 
\newcommand{\LLL}{\mathcal{L}} 
\newcommand{\HHH}{\mathcal{H}} 
\newcommand{\KKK}{\mathcal{K}} 
\title{A new minimal chordal completion}
\author{
\textsc{Jihoon Choi}%
\footnote{Department of Mathematics Education,
Cheongju University, Cheongju 28503, Korea.
\textit{E-mail}: \texttt{jihoon@cju.ac.kr}}
\quad
\textsc{Soogang Eoh}%
\footnote{Department of Mathematics Education,
Seoul National University, Seoul 08826, Korea.
\textit{E-mail}: \texttt{mathfish@snu.ac.kr}}
\quad
\textsc{Suh-Ryung Kim}%
\footnote{Department of Mathematics Education,
Seoul National University, Seoul 08826, Korea.
\textit{E-mail}: \texttt{srkim@snu.ac.kr}}
}
\begin{document}
\maketitle
\begin{abstract}
In this paper, we present a minimal chordal completion $G^*$ of a graph $G$ satisfying the inequality $\omega(G^*) - \omega(G) \le i(G)$ for the non-chordality index $i(G)$ of $G$.
In terms of our chordal completions, we partially settle the Hadwiger conjecture and the Erd\H{o}s-Faber-Lov\'{a}sz Conjecture, and extend the known $\chi$-bounded class by adding to it the family of graphs with bounded non-chordality indices.
\end{abstract}
\noindent
{\it Keywords.} hole cover, local chordalization, NC property, non-chordality index, chordal graph, graph coloring

\noindent
{{{\it 2010 Mathematics Subject Classification.} Primary 05C75, 05C15; Secondary 05C38, 05C69.}}

\section{Introduction}
Throughout this paper, we only consider simple graphs. We mean by a \emph{clique} of a graph both the vertex set of a complete subgraph and the complete subgraph itself.
The \emph{clique number} of a graph $G$ is defined to be the number of vertices in a maximum clique and denoted by $\omega(G)$. A \emph{hole} of a graph $G$ is an induced cycle of length at least $4$ in $G$. A \emph{chordal graph} is a graph without hole.

A graph $H$ is called a \emph{chordal completion} of a graph $G$, if $H$ is a chordal spanning supergraph of $G$.
See~\cite{heggernes2006minimal} for a survey on chordal completion.
In this paper, we present  a chordal completion of a graph $G$ which is efficient in the following sense:
Only edges joining two vertices on holes of $G$ are added to obtain our chordal completion (Theorem~\ref{thm:lco}).
Furthermore, we show that any minimal chordal completion of a graph can be obtained by joining two vertices on holes of $G$ by edges (Proposition~\ref{prop:local}).
As a matter of fact, for a nonnegative integer $k$, we give a sufficient condition for a graph $G$ which has a chordal completion $G^*$ satisfying the inequality $\omega(G^*) - \omega(G) \le k$ (Theorem~\ref{thm:lco}).
This is a strong point of our chordal completion which differentiate it from other chordal completions. For example, it is shown that a graph $G$ has treewidth at most $k$ if and only if it has a chordal completion $G^*$ satisfying $\omega(G^*) \le k+1$.
Yet, this characterization gives no information on  $\omega(G)$, accordingly no significant information on $\omega(G^*) - \omega(G)$.

The notion ``NC property''  plays a key role throughout this paper.
We say that a hole $H$ \emph{contains} a vertex $v$ (resp.\ an edge $e$) if $v$ (resp.\ $e$) is a vertex (resp.\ an edge) on $H$.
We denote the set of holes in a graph $G$ by $\mathcal{H}(G)$ and the set of holes in $G$ containing $u$ by $\HHH(G, u)$.

A nonempty subset $X$ of $V(G)$ is called a \emph{hole cover} of $G$ provided that every hole in $G$ contains at least one vertex of $X$.
Note that, if $G$ has no hole, that is, $G$ is a chordal graph, then any nonempty vertex set is a hole cover of $G$.

For a vertex $u$ of a graph $G$, we say that $u$ satisfies the \emph{non-consecutive property} (\emph{NC property} for short) if any hole in $\HHH(G, u)$ and any hole not in $\HHH(G, u)$ do not share consecutive edges.
A vertex subset $\CCC$ of $G$ is said to satisfy the \emph{NC property in $G$} if every vertex in $\CCC$ satisfies the NC property and every hole in $G$ contains at most one vertex in $\CCC$.
We say that a graph satisfies the \emph{NC property} if it has a hole cover satisfying the NC property.
It is easy to see that
\begin{itemize}
  \item[($\natural$)] If a hole cover $\CCC$ of $G$ satisfies the NC property in $G$, then a nonempty set $\CCC \setminus \AAA$ is a hole cover satisfying the NC property in $G - \AAA$ for any proper subset $\AAA$ of $V(G)$ not including $\CCC$.
\end{itemize}
Then it is immediately true that any induced subgraph of a graph satisfying the NC property satisfies the NC property.
See Figure~\ref{fig:noneg} for a graph not satisfying the NC property.
To see why, suppose to the contrary that there exists a hole cover $\CCC$ of $G$ with the NC property.
To cover the hole $H_2$, $\CCC$ must contain a vertex on $H_2$.
Suppose that a vertex in $V(H_1) \cap  V(H_2)$ is contained in $\CCC$.
Since $\CCC$ is a hole cover satisfying the NC property, a vertex in $V(H_3)\setminus V(H_2)$ must be contained in $\CCC$ to cover $H_3$.
Then, however, those two vertices are on the hole of length $8$ surrounding $H_2$ and $H_3$, which contradicts the assumption that $\CCC$ satisfies the NC property.
Even if a vertex in $V(H_2) \cap  V(H_3)$ is contained in $\CCC$, we may reach a contradiction by applying a similar argument to the holes $H_1$ and $H_2$.
Therefore we may conclude that there is no hole cover of $G$ satisfying the NC property.
If we delete the vertex $x$, we obtain a graph satisfying the NC property as $\{u,v,w\}$ is a hole cover of the new graph with the NC property.

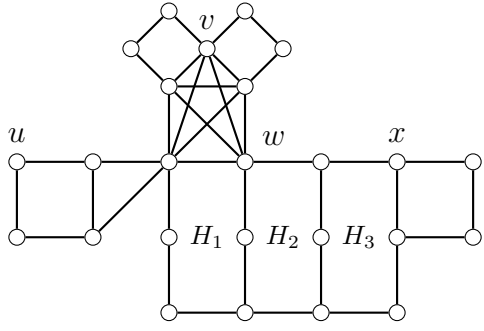
\begin{figure}
  \centering

\begin{tikzpicture}[x=1cm, y=1cm]

    \draw (2.5,-1) node{\footnotesize $H_1$};
    \draw (3.5,-1) node{\footnotesize $H_2$};
    \draw (4.5,-1) node{\footnotesize $H_3$};

    \vertex (a1) at (0,0) [label=above:$u$]{};
    \vertex (a2) at (0,-1) [label=above:$$]{};
    \vertex (a3) at (1,-1) [label=above:$$]{};
    \vertex (a4) at (1,0) [label=above:$$]{};

    \vertex (b1) at (2,0) [label=above:$$]{};
    \vertex (b2) at (2,-1) [label=above:$$]{};
    \vertex (b3) at (2,-2) [label=above:$$]{};

    \vertex (b4) at (3,0) [label=above right:$w$]{};
    \vertex (b5) at (3,-1) [label=right:$$]{};
    \vertex (b6) at (3,-2) [label=right :$$]{};

    \vertex (b7) at (4,0) [label=above:$$]{};
    \vertex (b8) at (4,-1) [label=above:$$]{};
    \vertex (b9) at (4,-2) [label=above:$$]{};

    \vertex (b10) at (5,0) [label=above:$x$]{};
    \vertex (b11) at (5,-1) [label=above:$$]{};
    \vertex (b12) at (5,-2) [label=above:$$]{};

    \vertex (b13) at (6,0) [label=above:$$]{};
    \vertex (b14) at (6,-1) [label=above:$$]{};

    \vertex (c1) at (2,1) [label=above:$$]{};
    \vertex (c2) at (1.5,1.5) [label=above:$$]{};
    \vertex (c3) at (2,2) [label=above:$$]{};
    \vertex (c4) at (2.5,1.5) [label=above:$v$]{};
    \vertex (c5) at (3,2) [label=above:$$]{};
    \vertex (c6) at (3.5,1.5) [label=above:$$]{};
    \vertex (c7) at (3,1) [label=above:$$]{};

    \path
    (a1) edge [-,thick] (a2)
    (a2) edge [-,thick] (a3)
    (a3) edge [-,thick] (a4)
    (a4) edge [-,thick] (a1)

    (b1) edge [-,thick] (a3)
    (b1) edge [-,thick] (a4)

    (b1) edge [-,thick] (b2)
    (b2) edge [-,thick] (b3)

    (b4) edge [-,thick] (b5)
    (b5) edge [-,thick] (b6)

    (b7) edge [-,thick] (b8)
    (b8) edge [-,thick] (b9)

    (b10) edge [-,thick] (b11)
    (b11) edge [-,thick] (b12)

    (b13) edge [-,thick] (b14)

    (b1) edge [-,thick] (b4)
    (b4) edge [-,thick] (b7)
    (b7) edge [-,thick] (b10)
    (b10) edge [-,thick] (b13)

    (b11) edge [-,thick] (b14)

    (b3) edge [-,thick] (b6)
    (b6) edge [-,thick] (b9)
    (b9) edge [-,thick] (b12)

    (c1) edge [-,thick] (b1)
    (c1) edge [-,thick] (b4)
    (c4) edge [-,thick] (b1)
    (c4) edge [-,thick] (b4)
    (c7) edge [-,thick] (b1)
    (c7) edge [-,thick] (b4)

    (c1) edge [-,thick] (c2)
    (c2) edge [-,thick] (c3)
    (c3) edge [-,thick] (c4)
    (c4) edge [-,thick] (c1)
    (c4) edge [-,thick] (c5)
    (c5) edge [-,thick] (c6)
    (c6) edge [-,thick] (c7)
    (c7) edge [-,thick] (c1)
    (c7) edge [-,thick] (c4)

	;

\end{tikzpicture}

  \caption{A graph $G$ not satisfying the NC property }\label{fig:noneg}
\end{figure}

Let $G$ be a graph with a hole $H$.
For a vertex $u$ on $H$, we \emph{locally chordalize} the hole $H$ by $u$ in the following manner: we join $u$ and each vertex on $H$ nonadjacent to $u$.

For a graph $G$ and a vertex $u$ satisfying the NC property, locally chordalizing all the holes in $\HHH(G, u)$ does not create any new hole (Theorem~\ref{thm:chordalize}).
Based on this observation, we found that, a hole cover $\CCC$ of a graph $G$ can be partitioned into $\CCC_1, \ldots, \CCC_k$ for some positive integer $k$ so that
\begin{itemize}
  \item[(i)] $\CCC_i$ is a hole cover of the graph $G_{i}$ satisfying the NC property,
  \item[(ii)] $G_{i}^*$ is chordal,
\end{itemize}
where $G_0 = G_0^* = G - \CCC$, $G_{i}$ is the graph defined by $V(G_i) = V(G_{i-1}^*) \cup \CCC_{i}$ and
      \[
         E(G_i) =
         E(G_{i-1}^*) \cup E \left( G - \bigcup_{j=i+1}^k \CCC_j\right),
   \]
and $G_i^*$ is a chordal completion of $G_i$ obtained by applying local chordalizations recursively by the vertices in $\CCC_i$ for each $i=1$, $\ldots$, $k$ (Theorem~\ref{thm:existence}).
Our chordal completion is $G_k^*$ obtained for a hole cover with the smallest number $k$ of partitions in Theorem~\ref{thm:existence}.
The smallest number $k$ is called the non-chordality index of $G$ and denoted by $i(G)$ (see Definition~\ref{def:non-chordality index}).


%

For a given graph $G$, a function $f : V (G) \to \{1, 2,  \ldots, k \}$ is called a \emph{proper $k$-coloring} of $G$ if $f(u) \neq f(v)$ for any adjacent vertices $u$ and $v$.
The \emph{chromatic number} $\chi(G)$ of a graph G is defined to be the least positive integer $k$ such that there exists a proper $k$-coloring of $G$.
It is well-known that, for a graph $G$,
\begin{equation}\label{eqn:chi,l,dp}
\chi(G) \le \chi_l(G) \le \chi_{DP}(G)
\end{equation}
where $\chi_l(G)$ and $\chi_{DP}(G)$ are the list-chromatic number and the DP-chromatic number of $G$, respectively (see \cite{vizing1976vertex}, \cite{erdos1979choosability}, \cite{dvovrak2017correspondence} for the definitions).

For a positive integer $k$, a graph $G$ is \emph{$k$-degenerate} if any subgraph of $G$ contains a vertex having at most $k$ neighbors in it. Dvo\v{r}\'{a}k and Postle~\cite{dvovrak2017correspondence} observed that if a graph $G$ is $k$-degenerate, then
$\chi_{DP}(G) \le k+1$.
It is easy to check that every chordal graph $G$ is $(\omega(G)-1)$-degenerate and so
\begin{equation*}\label{eqn:chordal}
\omega(G) \le \chi(G) \le \chi_l(G) \le \chi_{DP}(G) \le \omega(G).
\end{equation*}
Therefore,
\begin{itemize}
  \item[($\S$)] for a chordal graph $G$, $\chi(G)=\chi_l(G)=\chi_{DP}(G)=\omega(G)$.
\end{itemize}

The observation ($\S$) directed our attention to the idea that, for a chordal completion $G^*$ of a graph $G$, the chromatic number of $G$ is bounded above by the clique number of $G^*$.

By \eqref{eqn:chi,l,dp}, an upper bound of $\chi_{DP}(G)$ (resp.\ $\chi_l(G)$) is an upper bound of $\chi_l(G)$ (resp.\ $\chi(G)$).
In this vein, it is interesting to check whether or not $\chi_{DP}(G) \le k$ (resp.\ $\chi_l(G) \le k$) when $\chi_l(G) \le k$ (resp.\ $\chi(G) \le k$) for a positive integer $k$.


We obtain sharp upper bounds for the chromatic number, the list chromatic number, and the DP chromatic number of a graph in terms of non-chordality index and partially settle the Hadwiger conjecture (Theorems~\ref{thm:coloring1}, \ref{thm:coloring2}, and \ref{thm:omega+i is sharp}).
Other than obtaining sharp upper bounds for chromatic numbers, we partially settle the Erd\H{o}s-Faber-Lov\'{a}sz Conjecture (Theorem~\ref{thm:erdos}), and extend the known $\chi$-bounded class by adding to it the family of graphs with bounded non-chordality indices. (Theorem~\ref{thm:chibound}).

%
%
%
%
%

\section{Graphs with the NC property}

In this section, we devote ourselves to proving the following theorem.

\begin{Thm}\label{thm:coloring1}
Let $G$ be a graph with the NC property.
Then $\chi_{DP}(G) \le \omega(G) + 1$.
If $G$ is $K_n$-minor-free, then $\chi_{DP}(G) \le n-1$.
\end{Thm}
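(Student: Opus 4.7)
The plan is to reduce both claims to clique bounds on a chordal completion $G^*$ of $G$ and then invoke the observation ($\S$) that $\chi_{DP}(H)=\omega(H)$ for chordal $H$.

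Since $G$ satisfies the NC property, fix a hole cover $\CCC$ of $G$ that satisfies the NC property. Applying the construction of Theorem~\ref{thm:existence} with $k=1$ and $\CCC_1=\CCC$ produces a chordal completion $G^*$ of $G$ whose every extra edge (i.e.\ edge in $E(G^*)\setminus E(G)$) results from locally chordalizing some hole of $G$ by a vertex of $\CCC$; in particular every extra edge is incident to a vertex of $\CCC$.

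For the first assertion I would prove $\omega(G^*)\le\omega(G)+1$. Take a clique $K$ of $G^*$; if $K$ is already a clique of $G$ there is nothing to show, so assume $K$ uses at least one extra edge, which forces $K\cap\CCC\ne\emptyset$. The NC property is then used twice: first, no extra edge can join two vertices of $\CCC$, for this would force two vertices of $\CCC$ onto a common hole in violation of the one-$\CCC$-vertex-per-hole condition; second, the non-consecutive condition forbids the induced cycles that would otherwise arise if extra edges incident to two distinct $\CCC$-vertices of $K$ coexisted. Combining these, I would isolate a single vertex $u_K\in K\cap\CCC$ incident to all of the extra edges of $K$, so that $K\setminus\{u_K\}$ is a clique of $G$ and hence $|K|\le\omega(G)+1$. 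Chordality of $G^*$ together with ($\S$) then yields $\chi_{DP}(G)\le\chi_{DP}(G^*)=\omega(G^*)\le\omega(G)+1$. The most delicate step here is the case analysis verifying that a single $u_K$ always suffices, and that is where the non-consecutive condition (rather than just the one-vertex-per-hole condition) is essential.

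For the second assertion I would show that $K_n$-minor-freeness of $G$ forces $\omega(G^*)\le n-1$. Since $G^*$ is chordal, its clique number equals its Hadwiger number, so it suffices to exhibit a $K_{|K|}$-minor of $G$ for every clique $K=\{x_1,\dots,x_m\}$ of $G^*$. Beginning with branch sets $B_i=\{x_i\}$, for each extra edge $x_ix_j\in K$—necessarily with some $x_i\in\CCC$ and $x_j$ on a hole $H$ through $x_i$ that was locally chordalized by $x_i$—I would append one of the two $x_i$-$x_j$ arcs of $H$ to a suitable branch set, so that every ``minor edge'' of $K$ is realized by an edge of $G$ between the corresponding branch sets. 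The main obstacle will be selecting these arcs coherently across distinct extra edges so that the branch sets remain pairwise vertex-disjoint; the non-consecutive condition is precisely what prevents the collisions between arcs coming from holes through different $\CCC$-vertices. The resulting branch sets realize a $K_m$-minor of $G$, so $K_n$-minor-freeness forces $m\le n-1$. Hence $\omega(G^*)\le n-1$, and $\chi_{DP}(G)\le\chi_{DP}(G^*)=\omega(G^*)\le n-1$ completes the proof.
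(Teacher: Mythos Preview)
Your overall architecture matches the paper's: build $G^*=\widehat{G}(\CCC)$, show $\omega(G^*)\le\omega(G)+1$ by finding a single $u_K\in K\cap\CCC$ with $K\setminus\{u_K\}$ a $G$-clique (this is Theorem~\ref{thm:newclique}, yielding Corollary~\ref{cor:newclique}), and show that every clique of $G^*$ is a minor of $G$ (Theorem~\ref{thm:knminor}, yielding Corollary~\ref{cor:knminor}). Your sketch for the first assertion is on target; the paper's version of the ``delicate step'' is Lemma~\ref{lem:newedge} (two adjacent $\CCC$-vertices cannot have extra edges sharing their non-$\CCC$ endpoint) followed by a short $4$-cycle argument forcing two $\CCC$-vertices onto a common hole.

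For the second assertion your diagnosis of the obstacle is off, and this matters. Once the first part is established, \emph{all} extra edges of $K$ are incident to one vertex $u\in\CCC$, so there are no ``holes through different $\CCC$-vertices'' to reconcile; the non-consecutive condition has already done its work in part one and is not invoked again here. The genuine obstacle is that the $(u,v_i)$-arc you choose must avoid the \emph{other} clique vertices $v_j\in K\setminus\{u\}$. The paper's key observation---which your sketch is missing---is that since $K\setminus\{u\}$ is a clique of $G$ and a hole is chordless, each hole through $u$ meets $K\setminus\{u\}$ in at most two vertices; hence one of the two $(u,v_i)$-arcs is internally disjoint from $K$. Overlaps among the chosen arcs for different $i$ are harmless, since they are all contracted into $u$'s branch set and sequential contraction turns later arcs into walks that can still be contracted; so disjointness of branch sets is automatic once each arc avoids $K\setminus\{u,v_i\}$. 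With this correction your plan coincides with the paper's.
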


%
\noindent
As a corollary of Theorem~\ref{thm:coloring1}, we can prove a special case of Four Color Theorem.

\begin{Cor}\label{cor:planar}
For a planar graph $G$ with the NC property, $\chi_{DP}(G) \le 4$.
\end{Cor}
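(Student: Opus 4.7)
The plan is to deduce this directly from the second clause of Theorem~\ref{thm:coloring1}. The NC hypothesis is already baked in, so the only thing to supply is a suitable value of $n$ for which $G$ is $K_n$-minor-free. Since $G$ is planar, Wagner's theorem (equivalently, Kuratowski's theorem together with the observation that a $K_5$-minor forces a $K_5$-subdivision in some related sense) gives that $G$ contains no $K_5$ minor. Applying Theorem~\ref{thm:coloring1} with $n = 5$ then yields $\chi_{DP}(G) \le n - 1 = 4$.

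So the proof I would write is essentially one line: invoke the planarity of $G$ to conclude $K_5$-minor-freeness, then quote the $K_n$-minor-free clause of Theorem~\ref{thm:coloring1}. There is no real obstacle; the substantive work is all inside Theorem~\ref{thm:coloring1} itself, and this corollary is just the specialization $n = 5$ to the classical minor-closed family of planar graphs.

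It is worth noting (though not strictly necessary for the proof) that this bound is stronger than what the Four Color Theorem gives for planar graphs with the NC property: the Four Color Theorem yields $\chi(G) \le 4$, whereas here we get the stronger $\chi_{DP}(G) \le 4$, and moreover without invoking the Four Color Theorem at all. This is the conceptual content of the corollary, so I would mention it in a short remark following the one-line proof.
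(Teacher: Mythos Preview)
Your proposal is correct and matches the paper's approach exactly: the paper states Corollary~\ref{cor:planar} immediately after Theorem~\ref{thm:coloring1} without a separate proof, leaving implicit precisely the argument you give (planar $\Rightarrow$ $K_5$-minor-free by Wagner's theorem, then apply the second clause of Theorem~\ref{thm:coloring1} with $n=5$). Your remark about this strengthening the Four Color Theorem to $\chi_{DP}$ also aligns with the paper's framing of the corollary as ``a special case of Four Color Theorem.''
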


\begin{Lem}\label{lem:chord}
Given a graph $G$ and a cycle $C$ of $G$ with length at least four,
suppose that a section $Q$ of $C$ forms an induced path of $G$ and contains a path $P$  with length at least two none of whose internal vertices is incident to a chord of $C$ in $G$.
Then $P$ can be extended to a hole $H$ in $G$ so that $V(P) \subsetneq V(H) \subset V(C)$ and $H$ contains a vertex on $C$ not on $Q$.
\end{Lem}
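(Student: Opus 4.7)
The plan is to extend $P$ into a hole by attaching a shortest path in a suitable induced subgraph. Let $x,y$ denote the endpoints of $P$ and $\mathrm{int}(P)$ its set of internal vertices. I would set $G' := G[V(C)\setminus \mathrm{int}(P)]$ and let $R$ be a shortest $x$--$y$ path in $G'$; the arc of $C$ running from $y$ around to $x$ while avoiding $\mathrm{int}(P)$ lies in $G'$, so such an $R$ exists, and being a shortest path, $R$ is automatically induced in $G$. Then I would take $H := P \cup R$.

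The crux of the argument is that $R$ is forced to use a vertex of $V(C)\setminus V(Q)$. Since $Q$ is an induced path of $G$, the only edges of $G$ with both ends in $V(Q)$ are the edges of $Q$ itself; removing $\mathrm{int}(P)$ from $Q$ therefore splits $V(Q)\setminus \mathrm{int}(P)$ into two components, one containing $x$ and the other containing $y$. Consequently any $x$--$y$ path in $G'$ must leave $V(Q)$, and in particular $R$ contains a vertex of $V(C)\setminus V(Q)$. This simultaneously yields the inclusions $V(P)\subsetneq V(H)\subseteq V(C)$ and the requirement that $H$ meets $V(C)\setminus V(Q)$.

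To finish, I must verify that $H$ is an induced cycle of length at least $4$. The length bound follows from $|V(P)|\ge 3$, $|V(R)|\ge 3$, and $V(P)\cap V(R)=\{x,y\}$. For induced-ness I would rule out three kinds of potential chords: chords of $P$ are excluded because $P$ is a subpath of the induced path $Q$; chords of $R$ are excluded because $R$ is a shortest path; and the possible edge $xy$ is excluded because $x,y$ are non-consecutive in the induced path $Q$ (as $P$ has length at least $2$). Finally, any edge in $G$ between $\mathrm{int}(P)$ and $V(R)\setminus\{x,y\}\subseteq V(C)\setminus V(P)$ would be a chord of $C$ incident to an internal vertex of $P$, contradicting the hypothesis on $P$. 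The only substantive step is the component-separation argument that forces $R$ to leave $V(Q)$; once that is in hand, the rest is routine chord bookkeeping.
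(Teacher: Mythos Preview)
Your proof is correct and follows essentially the same approach as the paper: both take a shortest $x$--$y$ path in $G[V(C)\setminus\mathrm{int}(P)]$, concatenate it with $P$, and verify the result is a hole that leaves $Q$. The only cosmetic difference is in the last step: the paper argues directly that a cycle contained in $V(Q)$ would force a chord of the induced path $Q$, whereas you phrase the same fact as a component-separation argument in $G[V(Q)\setminus\mathrm{int}(P)]$.
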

\begin{proof}
Let $v_i$ and $v_j$ be the origin and the terminal of $P$.
Since $P$ is an induced path of length at least two, $v_i$ and $v_j$ are nonadjacent.
Now we take a shortest $(v_j,v_i)$-path $P'$ with some  vertices on the $(v_i,v_j)$-section of $C$ other than $P$.
Since $v_i$ and $v_j$ are nonadjacent, $P'$ has length at least two.
Therefore $PP'$ is a cycle of length at least four.
By the hypothesis, none of the internal vertices of $P$ is incident to a chord of $C$.
In addition, $P$ and $P'$ are induced paths, so $H:=PP'$ is actually a hole in $G$.
Note that $V(H) \subset V(C)$.
If every vertex on $H$ were on $Q$, then $Q$ would have a chord as $V(H) \subset V(Q)$, which is impossible.
Therefore $H$ contains a vertex on $C$ not on $Q$.
\end{proof}

Given a graph $G$ and nonempty vertex sets $S_1$ and $S_2$, we denote the set of edges joining vertices of $S_1$ and vertices of $S_2$ by $[S_1, S_2]$.
For simplicity, we use $[v, S]$ instead of $[\{v\}, S]$ for a vertex $v$ and a nonempty vertex set $S$ of a graph $G$.

\begin{Lem}\label{lem:hole}
Given a graph $G$, suppose that there exist a hole $H$, an induced path $P$, and two nonadjacent vertices $u$ and $v$ on $H$ not on $P$ satisfying the properties that
\begin{itemize}
  \item[(i)] $v$ is nonadjacent to any vertex on $P$ in $G$;
  \item[(ii)] there exist an internal vertex on a $(u,v)$-section of $H$ and an internal vertex on the other $(u,v)$-section of $H$ such that each of them is adjacent to a vertex on $P$.
\end{itemize}
Then there is a hole not containing $u$ but containing two consecutive edges on $H$ incident to $v$ and containing a vertex on $P$ but not on $H$.
\end{Lem}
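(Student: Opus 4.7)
The plan is to build the hole required by the lemma as an explicit cycle. Let $S_1$ and $S_2$ denote the two $(u,v)$-sections of $H$, and let $v_1,v_2$ be the $H$-neighbors of $v$ on $S_1,S_2$ respectively; since $u,v$ are nonadjacent, $v_i\ne u$. By hypothesis~(ii) each $S_i$ has an interior vertex with a neighbor in $V(P)$; let $x_i$ be such a vertex that is \emph{closest to $v$} along $S_i$, so $x_i$ is internal to $S_i$ and $x_i\ne u$. Among all pairs $(p_1,p_2)$ with $p_i\in N(x_i)\cap V(P)$, fix one for which the $(p_1,p_2)$-subpath $P'$ of $P$ has the fewest edges (possibly $p_1=p_2$, in which case $P'$ is a single vertex). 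Form the closed walk
\[
  C' \;:\; v \to v_1 \to \cdots \to x_1 \to p_1 \to \cdots \to p_2 \to x_2 \to \cdots \to v_2 \to v,
\]
in which the $v_1\to x_1$ and $x_2\to v_2$ portions are the corresponding subsections of $S_1$ and $S_2$, and $p_1\to\cdots\to p_2$ is $P'$.

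I would then verify, in order: (a) $C'$ avoids $u$; (b) $C'$ is chordless; (c) $C'$ meets $V(P)\setminus V(H)$; (d) $|V(C')|\ge 4$. For (a): the $S_i$-subsections terminate at the interior vertex $x_i$ of $S_i$, and $V(P')\subseteq V(P)$ misses $u$ since $u\notin V(P)$. For (b): subsections of the induced paths $S_i$ and $P$ are themselves induced, so no chord lives within a single segment; $v$'s only $H$-neighbors are $v_1,v_2$ and by~(i) $v$ has no neighbor on $P$, so no chord is incident to $v$; any two vertices in $V(S_1)\setminus\{u,v\}$ and $V(S_2)\setminus\{u,v\}$ lie on the hole $H$ but are non-consecutive on it, hence non-adjacent in $G$; interior vertices of an $S_i$-subsection strictly closer to $v$ than $x_i$ have no neighbor on $V(P)$ by the choice of $x_i$; and chords from $x_i$ into the interior of $P'$ (or to the far endpoint $p_{3-i}$) are excluded by the minimality of $P'$. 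These same checks also rule out repeated vertices, so $C'$ is a simple cycle.

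For (c): suppose $V(P')\subseteq V(H)$. Since $H$ is a hole, $G[V(H)]$ has only the edges of $H$, so $P'$ would be a subpath of $H$, forcing $p_1\in V(S_1)$ (as an $H$-neighbor of $x_1\in V(S_1)\setminus\{u,v\}$) and similarly $p_2\in V(S_2)$; but any path in $H$ from $V(S_1)$ to $V(S_2)$ must traverse $\{u,v\}$, contradicting $\{u,v\}\cap V(P)=\emptyset$. For (d): $v,v_1,v_2\in V(H)$ are pairwise distinct, and any vertex of $V(P')\setminus V(H)$ (which exists by (c)) supplies a fourth vertex on $C'$. The main obstacle is (c): the hypothesis $u,v\notin V(P)$ is used decisively there, for without it $P'$ could lie on $H$ and join $S_1$ to $S_2$ through $u$ or $v$, keeping the entire cycle inside $V(H)$. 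The chord audit in (b) is the most tedious part but is tractable once the two minimality choices — closest $x_i$ to $v$, and then shortest $P'$ — are made in that order so that they cooperate rather than interfere.
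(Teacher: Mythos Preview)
Your proof is correct and follows essentially the same approach as the paper's: both select, on each $(u,v)$-section of $H$, the interior vertex with a neighbor on $P$ that is closest to $v$, then choose neighbors on $P$ so that the connecting subpath of $P$ is shortest, and verify that the resulting cycle through $v$ is a chordless cycle avoiding $u$. Your argument for (c) is slightly more explicit than the paper's (which simply notes that two distinct holes cannot satisfy $V(C)\subseteq V(H)$), and your simplicity check is more careful, but the construction and the key minimality choices are identical.
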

\begin{proof}
Let $P = z_1 z_2 \cdots z_r$ ($r \ge 1$).
By the hypothesis that $u$ and $v$ are not on $P$, $z_i \neq u,v$ for each $i=1,\ldots,r$.
Since $u$ and $v$ are nonconsecutive vertices on $H$, we may give a sequence of $H$ as follows:
\[
H = v x_1 x_2 \cdots x_p u y_q y_{q-1} \cdots y_1 v \ (p,q \ge 1).
\]
For notational convenience, we let $S_x = \{x_1, \ldots, x_p\}$ and $S_y = \{y_1, \ldots, y_q\}$.
Let $\alpha = \min \{i \in \{1,\ldots,p\} \mid [x_i, V(P)] \neq \emptyset  \}$ and $\beta = \min \{j \in \{1,\ldots,q\} \mid [y_j, V(P)] \neq \emptyset \}$.
By the property (ii), $[S_x, V(P)] \neq \emptyset$ and $[S_y, V(P)] \neq \emptyset$ and so $\alpha$ and $\beta$ exist.
Among the vertices on $P$ which are adjacent to $x_\alpha$ and among the vertices on $P$ which are adjacent to $y_\beta$, we take $z_\gamma$ and $z_\delta$ from them, respectively, with the smallest distance on $P$.
Let $P^*$ be the $(z_\gamma, z_\delta)$-section of $P$.
Then $C := v x_1 x_2 \cdots x_\alpha P^* y_\beta y_{\beta-1} \cdots y_1 v$ is a cycle not containing $u$.
We also note that $C$ contains $x_1v$ and $y_1v$, which are consecutive edges on $H$ incident to $v$.
It is easy to check that $C$ has length at least four.
No two vertices in $V(C) \setminus V(P^*)$ or in $V(P^*)$ can form a chord of $C$ since the vertices in $V(C) \setminus V(P^*)$ are on the hole $H$ and $P^*$ is an induced path.
Moreover, a vertex in $V(C) \setminus V(P^*)$ and a vertex in $V(P^*)$ cannot form a chord of $C$ by the choice of $\alpha$, $\beta$, $z_\gamma$, and $z_\delta$.
Therefore we can conclude that $C$ is a hole in $G$.
Since $u$ is not on $C$, $C$ is distinct from $H$.
We note that $C$ and $H$ both are holes and the vertices on $C$ other than the ones on $P^*$ lie on $H$.
Therefore there must be a vertex on $P^*$ not on $H$.
Since $P^*$ is a section of $P$, $C$ contains a vertex on $P$ but not on $H$.
\end{proof}


Let $G$ be a graph with a hole $H$.
For a vertex $u$ on $H$, we recall that locally chordalizing the hole $H$ by $u$ means the following procedure: we join $u$ and each vertex on $H$ nonadjacent to $u$.
We call an edge added in the process of a local chordalization of a hole a \emph{newly added edge}.

\begin{Rem}
Note that, for a graph $G$, locally chordalizing the holes in $\HHH(G, u)$ by a vertex $u$ will destroy all the holes in $\HHH(G, u)$.
That is, if $H \in \HHH(G, u)$, then $H \notin \HHH(G^*, u)$ where $G^*$ is the graph resulting from the local chordalization by $u$.
\end{Rem}

\begin{Thm}\label{thm:chordalize}
Let $G$ be a graph and $u$ be a vertex of $G$ satisfying the NC property.
Then locally chordalizing all the holes in $\mathcal{H}(G, u)$ by $u$ does not create any new hole.
\end{Thm}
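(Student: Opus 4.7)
The plan is to argue by contradiction. Let $G^{*}$ denote the graph obtained from $G$ by locally chordalizing every hole in $\HHH(G,u)$ by $u$, and suppose $H'$ is a hole of $G^{*}$ that is not a hole of $G$. Every newly added edge is incident to $u$ and $G^{*}-u=G-u$, so $H'$ must pass through $u$ and must use at least one new edge at $u$. Write $H'=u\, w_{1}\, a_{1}\, a_{2}\,\cdots\, a_{t}\, w_{2}\, u$ with $t\ge 1$ and set $P':=w_{1}a_{1}\cdots a_{t}w_{2}$. Since $H'$ is induced in $G^{*}$ and no new edge lies on $P'$, the path $P'$ is induced in $G$; moreover each $a_{i}$ is nonadjacent to $u$ in $G$ (otherwise $ua_{i}$ would be an old chord of $H'$) and lies on no hole in $\HHH(G,u)$ (otherwise $ua_{i}$ would be a newly added chord of $H'$). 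Without loss of generality $uw_{2}\notin E(G)$, so $w_{2}$ lies on some hole $H\in\HHH(G,u)$.

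The next step is to apply Lemma~\ref{lem:hole} with this hole $H$, the two nonadjacent vertices $u$ and $v:=w_{2}$ of $H$, and the induced sub-path $P:=w_{1}a_{1}\cdots a_{t-1}$ of $P'$. Since $P'$ is induced, $w_{2}$ is nonadjacent in $G$ to every vertex of $P$, which establishes hypothesis (i); and $u,w_{2}\notin V(P)$ is immediate. If hypothesis (ii) of Lemma~\ref{lem:hole} also holds --- that is, each of the two $(u,w_{2})$-sections of $H$ contains an internal vertex with a neighbor on $V(P)$ --- then the lemma yields a hole $\widetilde{H}$ of $G$ that avoids $u$ and contains the two consecutive edges of $H$ incident to $w_{2}$. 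Because $H\in\HHH(G,u)$ while $\widetilde{H}\notin\HHH(G,u)$, these two holes share two consecutive edges of $G$, contradicting the NC property of $u$ and finishing the proof.

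The main obstacle is therefore to guarantee hypothesis (ii) of Lemma~\ref{lem:hole}, and this is where most of the technical work will go. My plan is a case analysis on the status of $w_{1}$. If $uw_{1}$ is also a new edge, then $w_{1}$ lies on some hole $H_{1}\in\HHH(G,u)$, and a suitable section of $H_{1}$ can be pieced onto $P'$ to supply the missing neighbor of $V(P)$ on whichever side of $H$ would otherwise lack one. If instead $uw_{1}\in E(G)$, I would close $P'$ with the edge $w_{1}u$ together with a shortest $(u,w_{2})$-section $R$ of $H$ to form a cycle of length at least four in $G$, and then invoke Lemma~\ref{lem:chord} on a chord-free sub-section of this cycle to extract an induced sub-hole meeting the section of $H$ opposite to $R$ internally, again providing the required adjacency. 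Along the way I must carefully rule out coincidences such as $w_{1}\in V(H)$ or unwanted intersections of $V(P')$ with $V(H)$, so that the constructed cycles are genuine simple cycles and the extracted holes are distinct from $H$; these verifications, while routine in spirit, constitute the technical core of the argument.
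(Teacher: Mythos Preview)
Your setup and the overall contradiction strategy match the paper's: assume a new hole $H'$ through $u$, observe that its interior vertices $a_i$ lie on no hole of $\HHH(G,u)$, and then use Lemmas~\ref{lem:chord} and~\ref{lem:hole} to derive a contradiction. However, there is a genuine logical gap in your fallback plan.

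In the case $uw_1\in E(G)$ you form the cycle $C$ by closing $P'$ with $uw_1$ and a $(u,w_2)$-section $R$ of $H$, and then say that Lemma~\ref{lem:chord} will ``extract an induced sub-hole meeting the section of $H$ opposite to $R$ internally, again providing the required adjacency.'' This cannot happen: Lemma~\ref{lem:chord} returns a hole whose vertex set is contained in $V(C)$, and the internal vertices of the section of $H$ opposite to $R$ do not lie in $V(C)$. So you will never recover hypothesis~(ii) of Lemma~\ref{lem:hole} this way. What Lemma~\ref{lem:chord} \emph{does} give you (if you take $Q=R\,u\,w_1$ and the length-two path through $u$) is a hole of $G$ containing $u$ together with some $a_i$; since $a_i$ then lies on a hole of $\HHH(G,u)$, the edge $ua_i$ is added in $G^{*}$ and is a chord of $H'$, contradicting that $H'$ is a hole. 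This is the contradiction the paper uses, and it is direct --- you do not need to feed back into Lemma~\ref{lem:hole}. In effect you should swap the roles of the two lemmas: build the cycle $C$ and apply Lemma~\ref{lem:chord} whenever a suitable section exists (which requires splitting on whether $w_1\in V(H)$ and, if not, whether $w_1$ has neighbours on both $(u,w_2)$-sections of $H$), and reserve Lemma~\ref{lem:hole} for the residual subcase where $w_1\notin V(H)$ and $w_1$ is adjacent to internal vertices of \emph{both} sections.

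The two-new-edges case is also more delicate than ``piece a section of $H_1$ onto $P'$.'' When $uw_1$ and $uw_2$ are both new and no single hole of $\HHH(G,u)$ contains both $w_1$ and $w_2$, you need two holes $H_1\ni w_1$ and $H\ni w_2$, and you must first use Lemma~\ref{lem:hole} (twice) to show that $w_1$ misses one side of $H$ and that a whole section of $H$ misses one side of $H_1$; only then can you concatenate an induced $(w_2,u)$-section of $H$ with an induced $(u,w_1)$-section of $H_1$ into a single induced path $Q$, form the cycle $Q\,a_1\cdots a_t\,w_2$, and apply Lemma~\ref{lem:chord} at $u$ to get the same contradiction as above. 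Your one-line description does not capture this, and without these preliminary applications of Lemma~\ref{lem:hole} the concatenated walk need not be an induced path.
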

\begin{proof}
Let $G^*$ be the graph obtained by locally chordalizing all the holes in $\HHH(G, u)$ by $u$.
Suppose to the contrary that $G^*$ has a hole, say $H^*$, not in $G$.
Obviously $H^*$ contains $u$ and at least one newly added edge.
Then, since $u$ is adjacent to exactly two vertices on $H^*$, $H^*$ contains one newly added edge or two newly added edges.

Let $uv$ be a newly added edge and
\[
H^* = uu_1u_2\cdots u_p v u \quad (p \ge 2).
\]
Next, we define a cycle $C$ by considering two cases.

{\it Case 1}. $H^*$ contains $uv$ as the only newly added edge.
By the definition of local chordalization, there exists a hole $H_1$ in $\HHH(G, u)$ containing $v$ on which $u$ and $v$ are not consecutive.
Then $u$ is adjacent to all the vertices on $H_1$ in $G^*$.
However, $u$ is not adjacent to $u_k$ ($k=2, \ldots, p)$ in $G^*$, so we can conclude that $u_k$ ($k=2,\ldots,p$) is not on $H_1$.
If $u_1$ is on $H_1$, then $u_1$ is adjacent to $u$ in $H_1$.
Thus, if $u_1$ is on $H_1$, then $uu_1u_2 \cdots u_p P$ is a cycle in $G$ for the  $(v,u)$-section, denoted by $P$, of $H_1$ not containing $u_1$.

If $u_1$ is not on $H_1$ and $u_1$ is not adjacent to any vertex on one of the $(v,u)$-sections of $H_1$ except $u$, then we denote such a section by $P'$.

Now we define the cycle  $C$ as follows:
\[
C=
\begin{cases}
   uu_1u_2 \cdots u_p P& \parbox{7cm}{if $u_1$ is on $H_1$;} \\[10pt]

   u u_1 u_2 \cdots u_p P' & \parbox{7cm}{if $u_1$ is not on $H_1$ and $u_1$ is not adjacent to any vertex on one of the $(v,u)$-sections of $H_1$ except $u$.}
\end{cases}
\]
See (a) and (b) of Figure~\ref{fig:example} for an illustration.

{\it Case 2}. $H^*$ contains another newly added edge $uw$.
Then $u_1 = w$.
Assume that there is a hole $H_2$ in $G$ which contains $u, v, w$.
Then no two of $u, v, w$ are consecutive on $H_2$.
Let $Q$ be the $(v,w)$-section of $H_2$ containing $u$.
Since $u$ is adjacent to all the vertices on $H_2$ but is not adjacent to $u_i$ in $G^*$,
we may conclude that $u_i$ is not on $H_2$ for each $i=2,\ldots,p$.
Therefore $w u_2 u_3 \cdots u_p Q$ is a cycle in $G$.
Now we let
\[
C = w u_2 u_3 \cdots u_p Q.
\]
See Figure~\ref{fig:example}(c) for an illustration.

It is obvious that the cycle $C$ defined in each case has length at least four and $Pu_1$, $P'u_1$, and $Q$ are induced paths of $G$ including $u$ and the two vertices right next to $u$ on $C$.
Moreover, $u$ is not adjacent to any vertex on $C$ except the two vertices right next to $u$, and the two vertices right next to $u$ on $C$ are not adjacent in $G$.
Thus, by Lemma~\ref{lem:chord}, the path $U$ composed of $u$ and the two vertices right next to it can be extended to a hole $H$ in $G$ so that $V(U) \subsetneq V(H) \subset V(C)$ and $H$ contains a vertex among $u_2, u_3, \ldots, u_p$.
Then $u$ is adjacent to $u_i$ for some $i \in \{2,3,\ldots,p\}$ in $G^*$ by the definition of local chordalization, which contradicts the choice of $H^*$.

Now it remains to consider the following cases:
\begin{itemize}
    \item[(i)] the edge $uv$ is the only newly added edge contained in $H^*$, $u_1$ is not on $H_1$, and there is a vertex on each $(u,v)$-section of $H_1$ which is adjacent to $u_1$ in $G$;
    \item[(ii)] a newly added edge $uw$ other than $uv$ exists in $H^*$ and there is no hole in $G$ which contains all of $u$, $v$, and $w$.
\end{itemize}

We assume the case (i).
The hypothesis of Lemma~\ref{lem:hole} is satisfied by $H_1$ for $H$, $u_1$ for $P$, $u$, and $v$.
Therefore there exists a hole not containing $u$ but containing consecutive edges on $H_1$ incident to $v$.
This contradicts the hypothesis that $u$ satisfies the NC property.
Therefore the case (i) cannot happen.

Now we assume the case (ii).
Since $v$ and $w$ are not consecutive vertices on $H^*$, $w$ is not adjacent to $v$ in $G$.
Since $uv$ and $uw$ are newly added edges, there exist a hole $H_3$ containing $u$ and $v$, and a hole $H_4$ containing $u$ and $w$ in $G$.
By the case (ii) assumption, $w$ is not on $H_3$ and $v$ is not on $H_4$.
Let $H_3 = v x_1 x_2 \cdots x_q u y_r y_{r-1} \cdots y_1 v$ and $H_4 = w z_1 z_2 \cdots z_s u w_t w_{t-1} \cdots w_1 w$ ($q,r,s,t \ge 1$).
See Figure~\ref{fig:example}(d) for an illustration.
Since $u$ is adjacent to all the vertices on $H_3$ (resp.\ $H_4$) and is not adjacent to $u_i$ in $G^*$,
we may conclude that $u_i$ is not on $H_3$ (resp.\ $H_4$) for each $i=2, \ldots, p$.
For notational convenience, we let $S_x = \{x_1, \ldots, x_q\}$, $S_y = \{y_1, \ldots, y_r\}$, $S_z = \{z_1, \ldots, z_s\}$, and $S_w = \{w_1, \ldots, w_t\}$.

Suppose that, in $G$, $[w,S_x] \neq \emptyset$ and $[w,S_y] \neq \emptyset$.
We apply Lemma~\ref{lem:hole} with $H_3$ for $H$, $w$ for $P$, $u$, and $v$ to reach a contradiction as before.
Therefore $[w,S_x] = \emptyset$ or $[w,S_y] = \emptyset$.
Without loss of generality, we may assume $[w,S_x] = \emptyset$.
In addition, $w$ is not adjacent to $v$ in $G$.
Thus $[w,S_x \cup \{v\}] = \emptyset$.

Suppose that $[S_x \cup \{v\}, S_z] \neq \emptyset$ and $[S_x \cup \{v\}, S_w] \neq \emptyset$.
Then we apply Lemma~\ref{lem:hole} with $H_4$ for $H$, $vx_1x_2\cdots x_q$ for $P$, $u$, and $w$ for $v$ to reach a contradiction as before.
Therefore $[S_x \cup \{v\}, S_z] = \emptyset$ or $[S_x \cup \{v\}, S_w] = \emptyset$.
Without loss of generality, we may assume $[S_x \cup \{v\}, S_z] = \emptyset$.
Then $[S_x \cup \{v\}, S_z \cup \{w\}] = \emptyset$.

Now we consider the sequence $Q := v x_1x_2\cdots x_q u z_s z_{s-1} \cdots z_1 w$.
As being sections of $H_3$ and $H_4$, respectively, the two subsequences $v x_1x_2\cdots x_q u$ and $u z_s z_{s-1} \cdots z_1 w$ of $Q$ are induced paths in $G$.
In addition, since $[S_x \cup \{v\}, S_z \cup \{w\}] = \emptyset$, $Q$ is an induced path in $G$.
Consider the cycle $C:=Q u_2 u_{3} \cdots u_p v$.
Since $u$ is on $H_3$, $H_4$, and $H^*$, $u$ is not incident to any chord of $C$ in $G$.
Then we apply Lemma~\ref{lem:chord} with $C$, $Q$, and $x_q u z_s$ for $P$ to reach a contradiction as before.
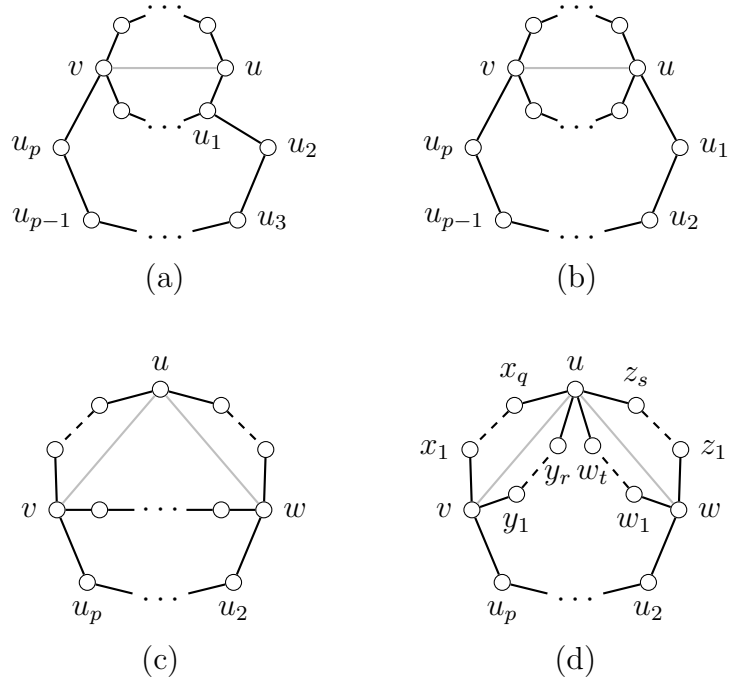
\begin{figure}[t]
\begin{center}
\begin{tikzpicture}[x=0.8cm, y=0.8cm]

    \vertex (u) at (0:1) [label=right:$u$]{};
    \vertex (z2) at (360*1/8:1) [label=above:$$]{};
    \node[style={circle,minimum size=5pt}] (z3) at (360*2/8:1) [label=center:\hskip0.165em $\cdots$]{};
    \vertex (z4) at (360*3/8:1) [label=above:$$]{};
    \vertex (v) at (360*4/8:1) [label=left:$v$]{};
    \vertex (z6) at (360*5/8:1) [label=below:$$]{};
    \node[style={circle,minimum size=5pt}] (z7) at (360*6/8:1) [label=center:\hskip0.165em $\cdots$]{};
    \vertex (z8) at (360*7/8:1) [label=below:$u_1$]{};

    \begin{scope}[yshift=-30]
    \vertex (u4) at (45*4:1.7) [label=left:$u_p$]{};
    \vertex (u5) at (45*5:1.7) [label=left:$u_{p-1}$]{};
    \vertex (u7) at (45*7:1.7) [label=right:$u_3$]{};
    \vertex (u8) at (45*8:1.7) [label=right:$u_2$]{};

    \node[style={circle,minimum size=5pt}] (u6) at (45*6:1.5) [label=center:\hskip0.165em $\cdots$]{};
    \end{scope}

    \draw[-, thick] (u) -- (z2);
    \draw[-, thick, shorten >=2] (z2) -- (z3);
    \draw[-, thick, shorten <=2] (z3) -- (z4);
    \draw[-, thick] (z4) -- (v);
    \draw[-, thick] (v) -- (z6);
    \draw[-, thick, shorten >=2] (z6) -- (z7);
    \draw[-, thick, shorten <=2] (z7) -- (z8);
    \draw[-, thick] (z8) -- (u);

    \draw[-, thick] (v) -- (u4);
    \draw[-, thick] (u4) -- (u5);
    \draw[-, thick, shorten >=5] (u5) -- (u6);
    \draw[-, thick, shorten <=5] (u6) -- (u7);
    \draw[-, thick] (u7) -- (u8);
    \draw[-, thick] (u8) -- (z8); 

    \draw[-, thick, color=lightgray] (v) -- (u);

    \draw (0,-3.5) node{(a)};
\end{tikzpicture}
\qquad \
\begin{tikzpicture}[x=0.8cm, y=0.8cm]

    \vertex (u) at (0:1) [label=right:$u$]{};
    \vertex (z2) at (360*1/8:1) [label=above:$$]{};
    \node[style={circle,minimum size=5pt}] (z3) at (360*2/8:1) [label=center:\hskip0.165em $\cdots$]{};
    \vertex (z4) at (360*3/8:1) [label=above:$$]{};
    \vertex (v) at (360*4/8:1) [label=left:$v$]{};
    \vertex (z6) at (360*5/8:1) [label=below:$$]{};
    \node[style={circle,minimum size=5pt}] (z7) at (360*6/8:1) [label=center:\hskip0.165em $\cdots$]{};
    \vertex (z8) at (360*7/8:1) [label=below:$$]{};

    \begin{scope}[yshift=-30]
    \vertex (u4) at (45*4:1.7) [label=left:$u_p$]{};
    \vertex (u5) at (45*5:1.7) [label=left:$u_{p-1}$]{};
    \vertex (u7) at (45*7:1.7) [label=right:$u_2$]{};
    \vertex (u8) at (45*8:1.7) [label=right:$u_1$]{};

    \node[style={circle,minimum size=5pt}] (u6) at (45*6:1.5) [label=center:\hskip0.165em $\cdots$]{};
    \end{scope}

    \draw[-, thick] (u) -- (z2);
    \draw[-, thick, shorten >=2] (z2) -- (z3);
    \draw[-, thick, shorten <=2] (z3) -- (z4);
    \draw[-, thick] (z4) -- (v);
    \draw[-, thick] (v) -- (z6);
    \draw[-, thick, shorten >=2] (z6) -- (z7);
    \draw[-, thick, shorten <=2] (z7) -- (z8);
    \draw[-, thick] (z8) -- (u);

    \draw[-, thick] (v) -- (u4);
    \draw[-, thick] (u4) -- (u5);
    \draw[-, thick, shorten >=5] (u5) -- (u6);
    \draw[-, thick, shorten <=5] (u6) -- (u7);
    \draw[-, thick] (u7) -- (u8);
    \draw[-, thick] (u8) -- (u);

    \draw[-, thick, color=lightgray] (v) -- (u);

    \draw (0,-3.5) node{(b)};
\end{tikzpicture}
\vskip0.5cm

\hskip0.1cm
\begin{tikzpicture}[x=0.8cm, y=0.8cm]

    \vertex (u) at (90:2) [label=above:$u$]{};

    \vertex (v) at (360*4/8:1.7) [label=left:$v$]{};
    \vertex (u3) at (360*5/8:1.7) [label=below:$u_p$]{};
    \node[style={circle,minimum size=5pt}] (u4) at (360*6/8:1.5) [label=center:\hskip0.165em $\cdots$]{};
    \vertex (u5) at (360*7/8:1.7) [label=below:$u_2$]{};
    \vertex (w) at (360*8/8:1.7) [label=right:${w}$]{};

    \vertex (w1) at (0:1) [label=below:$$]{};

    \vertex (z1) at (30:2) [label=right:$$]{};
    \vertex (z3) at (60:2) [label=right:$$]{};

    \vertex (y1) at (180:1) [label=below:$$]{};
    \node[style={circle,minimum size=5pt}] (y2) at (40:0) [label=center:\hskip0.165em $\cdots$]{};

    \vertex (x1) at (150:2) [label=left:$$]{};
    \vertex (x3) at (120:2) [label=left:$$]{};

    \begin{scope}[yshift=-30]

    \end{scope}

    \draw[-, thick, color=lightgray] (u) -- (v);
    \draw[-, thick] (v) -- (u3);
    \draw[-, thick, shorten >=5] (u3) -- (u4);
    \draw[-, thick, shorten <=5] (u4) -- (u5);
    \draw[-, thick] (u5) -- (w);
    \draw[-, thick, color=lightgray] (w) -- (u);

    \draw[-, thick] (u) -- (x3);
    \draw[-, thick, dashed] (x3) -- (x1);
    \draw[-, thick] (x1) -- (v);
    \draw[-, thick] (v) -- (y1);

    \draw[-, thick] (u) -- (z3);
    \draw[-, thick, dashed] (z3) -- (z1);
    \draw[-, thick] (z1) -- (w);
    \draw[-, thick] (w) -- (w1);

    \draw[-, thick, shorten >=4] (w1) -- (y2);
    \draw[-, thick, shorten >=4] (y1) -- (y2);

    \draw (0,-2.5) node{(c)};

\end{tikzpicture}
\qquad \hskip0.25cm
\begin{tikzpicture}[x=0.8cm, y=0.8cm]

    \vertex (u) at (90:2) [label=above:$u$]{};

    \vertex (v) at (360*4/8:1.7) [label=left:$v$]{};
    \vertex (u3) at (360*5/8:1.7) [label=below:$u_p$]{};
    \node[style={circle,minimum size=5pt}] (u4) at (360*6/8:1.5) [label=center:\hskip0.165em $\cdots$]{};
    \vertex (u5) at (360*7/8:1.7) [label=below:$u_2$]{};
    \vertex (w) at (360*8/8:1.7) [label=right:$w$]{};

    \vertex (w1) at (15:1) [label=below:$w_1$]{};
    \vertex (w3) at (75:1.1) [label=below:$w_t$]{};

    \vertex (z1) at (30:2) [label=right:$z_1$]{};
    \vertex (z3) at (60:2) [label=above:$z_s$]{};

    \vertex (y1) at (165:1) [label=below:$y_1$]{};
    \vertex (y3) at (105:1.1) [label=below:$y_r$]{};

    \vertex (x1) at (150:2) [label=left:$x_1$]{};
    \vertex (x3) at (120:2) [label=above:$x_q$]{};

    \begin{scope}[yshift=-30]

    \end{scope}

    \draw[-, thick, color=lightgray] (u) -- (v);
    \draw[-, thick] (v) -- (u3);
    \draw[-, thick, shorten >=5] (u3) -- (u4);
    \draw[-, thick, shorten <=5] (u4) -- (u5);
    \draw[-, thick] (u5) -- (w);
    \draw[-, thick, color=lightgray] (w) -- (u);

    \draw[-, thick] (u) -- (x3);
    \draw[-, thick, dashed] (x3) -- (x1);
    \draw[-, thick] (x1) -- (v);
    \draw[-, thick] (v) -- (y1);
    \draw[-, thick, dashed] (y1) -- (y3);
    \draw[-, thick] (y3) -- (u);

    \draw[-, thick] (u) -- (z3);
    \draw[-, thick, dashed] (z3) -- (z1);
    \draw[-, thick] (z1) -- (w);
    \draw[-, thick] (w) -- (w1);
    \draw[-, thick, dashed] (w1) -- (w3);
    \draw[-, thick] (w3) -- (u);

    \draw (0,-2.5) node{(d)};

\end{tikzpicture}

\end{center}
\caption{The cycle $C$ defined in the proof of Theorem~\ref{thm:chordalize}.
The gray colored edges represent the newly edges on the hole $H^*$ in $G^*$ and $w$ in (c) and (d) turns out to be $u_1$.}
\label{fig:example}
\end{figure}
\end{proof}



\begin{Cor}\label{cor:chordalize}
Suppose that a graph $G$ has a hole cover $\CCC = \{u_1, \ldots, u_k\}$ satisfying the NC property and that $G_0 = G$ and, for $i=1,\ldots,k$,  $G_i$ is the graph obtained by locally chordalizing the holes in $\HHH(G_{i-1}, u_i)$ by $u_{i}$.
Then $G_k$ is chordal.
Moreover, the resulting chordal graph is independent of the order of $u_1, \ldots, u_k$ by which the local chordalizations are performed.
\end{Cor}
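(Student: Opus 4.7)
The plan is to prove by induction on $i\in\{0,1,\ldots,k\}$ the combined statement $(\star)_i$: $\HHH(G_i)=\HHH(G)\setminus\bigcup_{j\le i}\HHH(G,u_j)$, and each $u_\ell$ with $\ell>i$ satisfies the NC property in $G_i$. Once $(\star)_k$ is established, chordality of $G_k$ follows immediately since $\CCC$ is a hole cover of $G$. The base case $(\star)_0$ is the hypothesis on $\CCC$.

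For the inductive step, the first subgoal is the identity $\HHH(G_{i-1},u_i)=\HHH(G,u_i)$. The forward inclusion follows from $\HHH(G_{i-1})\subseteq\HHH(G)$ in $(\star)_{i-1}$. For the reverse inclusion, since $\CCC$ satisfies the NC property in $G$, every $G$-hole contains at most one vertex of $\CCC$; hence any $H\in\HHH(G,u_i)$ avoids $u_1,\ldots,u_{i-1}$, and no edge added in steps $1,\ldots,i-1$ (each being incident to some $u_j$ with $j<i$) can be a chord of $H$, so $H$ persists as a hole of $G_{i-1}$. With this identity in hand, together with the NC property of $u_i$ in $G_{i-1}$ supplied by $(\star)_{i-1}$, Theorem~\ref{thm:chordalize} applies to step $i$: no new hole is created, every hole in $\HHH(G_{i-1},u_i)$ is destroyed (acquiring a chord through $u_i$), and every hole in $\HHH(G_{i-1})\setminus\HHH(G_{i-1},u_i)$ is preserved because all newly added edges are incident to $u_i$ and thus cannot chord a cycle avoiding $u_i$. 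This gives the first clause of $(\star)_i$. The second clause---that $u_\ell$ ($\ell>i$) retains the NC property in $G_i$---follows from its $G$-NC property because $\HHH(G_i)\subseteq\HHH(G)$ and every surviving hole has the same edge set in $G_i$ as in $G$.

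For order independence, the plan is to show that the set $A_i$ of edges newly added at step $i$ is determined by $G$ and $u_i$ alone. The identity $\HHH(G_{i-1},u_i)=\HHH(G,u_i)$ already makes the set of ``candidate'' edges $\{u_iv:v\in V(H)\text{ for some }H\in\HHH(G,u_i)\}$ order-independent. It remains to check that such a $u_iv$ fails to lie in $E(G_{i-1})$ exactly when it fails to lie in $E(G)$: otherwise $u_iv$ would have been added at some earlier step $j<i$, forcing $v=u_j$ and placing $u_i$ on a $G$-hole through $u_j$, which violates the NC property of $\CCC$ (two $\CCC$-vertices on a common hole). Hence $A_i$ depends only on $G$ and $u_i$, and $E(G_k)=E(G)\cup\bigcup_{u\in\CCC}A(u)$ is the same no matter in which order the vertices of $\CCC$ are processed.

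The main obstacle is pinning down the key identity $\HHH(G_{i-1},u_i)=\HHH(G,u_i)$; after that, the ``no new hole'' conclusion of Theorem~\ref{thm:chordalize} together with the localization of every added edge at the currently processed vertex makes both the destruction of all $G$-holes and the order independence essentially bookkeeping.
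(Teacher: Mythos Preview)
Your proof is correct and follows essentially the same approach as the paper: both hinge on Theorem~\ref{thm:chordalize} (no new holes) together with the key identity $\HHH(G_{i-1},u_i)=\HHH(G,u_i)$, which holds because no hole of $G$ contains two vertices of $\CCC$. For order independence the paper reduces to the case $k=2$ (showing that swapping two consecutive chordalizations yields the same graph), whereas you argue directly that the edge set added at step $i$ depends only on $G$ and $u_i$; these are minor presentational variations on the same idea.
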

\begin{proof}
By induction on the size $k$ of a hole cover satisfying the NC property.
If $k=1$, then $G_k$ is chordal by Theorem~\ref{thm:chordalize}.
Suppose that the statement is true for any graph with a hole cover with size $k-1$ satisfying the NC property.
Now we locally chordalize the holes in $\HHH(G, u_1)$ by $u_1$ to obtain $G_1$.
By Theorem~\ref{thm:chordalize}, $\CCC \setminus \{u_1\}$ is a hole cover of $G_1$.
By ($\natural$), $\CCC \setminus \{u_1\}$ still satisfies the NC property in $G_1$.
Therefore, by the induction hypothesis, $G_k$ is chordal.

It is sufficient to show the uniqueness for the case $k=2$.
Let $G'$ and $G''$ be the graphs obtained by locally chordalizing the holes in $\HHH(G, u_2)$ by $u_2$ and the holes in $\HHH(G', u_1)$ by $u_1$, respectively.

Since $\CCC$ satisfies the NC property, no hole in $G$ contains two vertices in $\CCC$.
Therefore, by Theorem~\ref{thm:chordalize}, $\HHH(G, u_1) = \HHH(G', u_1)$ and $\HHH(G_1, u_2) = \HHH(G, u_2)$, which implies $G_2 = G''$.
%
\end{proof}

Let $G$ be a graph with a hole cover $\CCC$ satisfying the NC property.
Corollary~\ref{cor:chordalize} says that a chordal graph can be obtained by applying local chordalizations recursively by the vertices in $\CCC$ and the resulting chordal graph is the same no matter which order of the vertices is taken.
The uniqueness of the resulting chordal graph allows us to denote it by a notation, say $\widehat{G}({\CCC})$.
In the rest of this paper, we derive some noteworthy theorems by utilizing $\widehat{G}(\CCC)$ for graphs $G$ having hole covers $\CCC$ satisfying the NC property.

\begin{Lem}\label{lem:newedge}
Let $G$ be a graph with a hole cover $\CCC$ satisfying the NC property.
Suppose that vertices $u$ and $w$ in $\CCC$ are adjacent in $G$.
Then any newly added edge incident to $u$ and any newly added edge incident to $w$ are not adjacent in $\widehat{G}(\CCC)$.
\end{Lem}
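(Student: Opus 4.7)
The plan is to argue by contradiction. Suppose $\widehat{G}(\CCC)$ contains two adjacent newly added edges $e$, $e'$ incident to $u$, $w$ respectively. Because newly added edges lie in $E(\widehat{G}(\CCC))\setminus E(G)$ and $uw\in E(G)$, the common endpoint of $e$ and $e'$ can be neither $u$ nor $w$, so there is a vertex $x$ with $e=ux$ and $e'=wx$. By the order-independence from Corollary~\ref{cor:chordalize} I may perform the local chordalizations so that $u$ is processed first and $w$ second. The edge $ux$ then witnesses a hole $H_u\in\HHH(G,u)$ containing $x$ with $u$ and $x$ nonadjacent on $H_u$; the edge $wx$ witnesses a hole through $w$ and $x$ in $G_1$, which by Theorem~\ref{thm:chordalize} is also a hole $H_w$ of $G$ with $w$ and $x$ nonadjacent on $H_w$. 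Since $\CCC$ satisfies the NC property, no hole of $G$ contains two vertices of $\CCC$, so $w\notin V(H_u)$ and $u\notin V(H_w)$; also $u\not\sim x$ and $w\not\sim x$ in $G$, since otherwise $ux$ or $wx$ would already be in $E(G)$.

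I would then split into three cases. \emph{Case 1.} Every $(u,x)$-section of $H_u$ has an internal vertex adjacent to $w$ in $G$. I apply Lemma~\ref{lem:hole} to $H=H_u$ with the one-vertex path $P=w$ and the nonadjacent pair $(u,x)$ in the roles played by $u$ and $v$ of the lemma; condition (i) holds because $x\not\sim w$ in $G$ and condition (ii) is exactly the case hypothesis. The lemma produces a hole $H^*$ of $G$ with $u\notin V(H^*)$, $w\in V(H^*)$, and the two consecutive edges of $H_u$ incident to $x$ both lying on $H^*$. Then $H_u\in\HHH(G,u)$ and $H^*\notin\HHH(G,u)$ share two consecutive edges, contradicting the NC property at $u$. \emph{Case 2.} Symmetrically, if every $(w,x)$-section of $H_w$ has an internal vertex adjacent to $u$ in $G$, applying Lemma~\ref{lem:hole} to $H=H_w$ with $P=u$ contradicts the NC property at $w$.

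\emph{Case 3.} Otherwise, I fix a $(u,x)$-section $T_u$ of $H_u$ no internal vertex of which is adjacent to $w$, and a $(w,x)$-section $T_w$ of $H_w$ no internal vertex of which is adjacent to $u$. I consider the closed walk formed by concatenating the edge $uw$, the section $T_w$ from $w$ to $x$, and the reverse of $T_u$ from $x$ to $u$. If $V(T_u)\cap V(T_w)=\{x\}$ and no edge of $G$ joins an internal vertex of $T_u$ to an internal vertex of $T_w$, then this walk is a simple cycle of length at least five whose only possible chords are of the types ruled out by the choices of $T_u, T_w$ and by the induced-path property of sections of holes; so it is a hole of $G$ containing both $u$ and $w$, contradicting the NC property of $\CCC$. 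Otherwise, I pick a pair $(a,b)\in V(T_u)\times V(T_w)$ with $a\neq u$, $b\neq w$, and either $a=b$ or $ab\in E(G)$, minimising $d_{T_u}(u,a)+d_{T_w}(w,b)$, and I replace the $x$-portion of the walk by the shorter detour through $a$ and $b$. The minimality rules out every remaining chord of the shortened cycle, so the cycle is a hole of $G$ through $u$ and $w$, again contradicting the NC property of $\CCC$.

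The main obstacle will be the bookkeeping in Case~3, namely verifying that the minimally chosen detour yields a simple, chordless cycle. The assumption that $T_u$ (respectively $T_w$) has no internal vertex adjacent to $w$ (respectively $u$) eliminates chords incident to $u$ or $w$; the induced-path property of sections of $H_u$ and $H_w$ eliminates chords staying inside one path; and the minimality of $(a,b)$ rules out the remaining ``crossing'' chords between $V(T_u)$ and $V(T_w)$. With these three observations an application of Lemma~\ref{lem:chord} essentially identical to the one in the proof of Theorem~\ref{thm:chordalize} closes the argument.
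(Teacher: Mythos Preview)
Your proposal is correct and follows the same overall strategy as the paper: assume a common endpoint $x$ of newly added edges $ux$ and $wx$, produce holes $H_u\in\HHH(G,u)$ and $H_w\in\HHH(G,w)$ through $x$, use Lemma~\ref{lem:hole} to dispose of the cases where $w$ sees both $(u,x)$-sections of $H_u$ or $u$ sees both $(w,x)$-sections of $H_w$, and in the remaining case combine a ``clean'' section from each hole with the edge $uw$ to exhibit a hole of $G$ containing both $u$ and $w$.

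The only substantive difference is how the hole is extracted in Case~3. The paper simply concatenates the two clean sections at $x$ to obtain a $(u,w)$-walk $W$ in $G-uw$, takes an induced $(u,w)$-path $S\subseteq W$, and observes that the vertex after $u$ on $S$ cannot lie on the $H_w$-section while the vertex before $w$ cannot lie on the $H_u$-section, forcing $|S|\ge 4$; then $S+uw$ is the desired hole. This replaces your minimisation over pairs $(a,b)$ and the attendant bookkeeping with a one-line argument. Your minimisation does work (the triangle case is excluded because an internal vertex of $T_u$ adjacent to $w$ would violate the choice of $T_u$), but your closing remark about Lemma~\ref{lem:chord} is misplaced: that lemma requires a simple cycle, which your concatenated walk need not be, and in any case the minimisation already delivers the hole directly without it.
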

\begin{proof}
Suppose to the contrary that there exist a newly added edge incident to $u$ and a newly added edge incident to $w$ which are adjacent in $\widehat{G}(\CCC)$.
Let $uv$ and $wv$ be such edges for some $v \in V(G)$.
Then, by the definition of local chordalization, neither $uv$ nor $wv$ is an edge in $G$ and there exist $H_u \in \HHH(G, u)$ and $H_w \in \HHH(G, w)$ sharing the vertex $v$.

To reach a contradiction, suppose that there exist an internal vertex on a $(u,v)$-section of $H_u$ and an internal vertex on the other $(u,v)$-section of $H_u$ each of which is adjacent to $w$.
Then, by Lemma~\ref{lem:hole} with $P = w$, there is a hole in $G$ not containing $u$ but containing two consecutive edges on $H_u$ incident to $v$, which contradicts the hypothesis that $\CCC$ satisfies the NC property.
Therefore there exists one of the $(u,v)$-sections of $H_u$ such that $w$ is not adjacent to any internal vertex on it.
Let $Q$ be such a section.
By symmetry, we may conclude that there exists one of the $(v,w)$-sections of $H_w$ such that $u$ is not adjacent to any internal vertex on it.
Let $R$ be such a section.

Let $W$ be the concatenation of $Q$ and $R$ at $v$.
Then $W$ is a $(u,w)$-walk in $G - uw$.
Now $W$ contains a $(u,w)$-path $S$ as an induced subgraph in $G-uw$.
By the previous argument, the vertex immediately following $u$ on $S$ cannot be on $R$ while the vertex immediately preceding $w$ on $S$ cannot be on $Q$.
Therefore we may conclude that the length of $S$ is at least three.
Thus $S$ and the edge $uw$ form a hole in $G$.
However, this hole contains both $u$ and $w$, which is impossible as $\CCC$ satisfies the NC property.
\end{proof}

\begin{Thm}\label{thm:newclique}
Let $G$ be a graph with a hole cover $\CCC$ satisfying the NC property.
Suppose that a vertex set $K$ forms a clique in $\widehat{G}(\CCC)$ but not in $G$.
Then there exists a vertex $u \in K \cap \CCC$ such that $K \setminus \{u\}$ is a clique in $G$.
\end{Thm}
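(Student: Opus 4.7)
The plan is to show that every edge of $K$ which lies in $\widehat{G}({\CCC})$ but not in $G$---I will call these the \emph{new} edges---is incident to one and the same vertex $u\in K\cap \CCC$; then $K\setminus\{u\}$ will automatically be a clique of $G$.

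The key preliminary is: any two distinct vertices of $K\cap\CCC$ are already adjacent in $G$. By construction every new edge is inserted during one of the iterated local chordalizations, so at least one of its endpoints lies in $\CCC$. Suppose $u,u'\in\CCC$ and that $uu'$ is a new edge: then it is added at some step $i$ with $u_i\in\{u,u'\}$, so a hole of $G_{i-1}$ must contain both $u$ and $u'$. Theorem~\ref{thm:chordalize} guarantees that no new hole is ever created, so this hole is already a hole of $G_{i-1}$ with two $\CCC$-vertices, and $(\natural)$ tells us the remaining hole cover $\{u_i,\dots,u_k\}$ still satisfies the NC property in $G_{i-1}$. Hence the other $\CCC$-endpoint is not in $\{u_{i+1},\dots,u_k\}$; but it also cannot be some $u_j$ with $j<i$, because after step $j$ the vertex $u_j$ is adjacent to every vertex on every hole through it and so lies on no hole of $G_j$, a fortiori none of $G_{i-1}$. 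This contradiction proves the key claim, and the same bookkeeping shows that each new edge has exactly one endpoint in $\CCC$.

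With the key claim in hand, suppose for contradiction that $K$ contains two new edges $e_1=u_1v_1$ and $e_2=u_2v_2$ whose $\CCC$-endpoints $u_1\ne u_2$ are distinct (and $v_1,v_2\notin\CCC$). The key claim gives $u_1u_2\in E(G)$, so Lemma~\ref{lem:newedge} prevents $e_1$ and $e_2$ from sharing a vertex; thus $u_1,v_1,u_2,v_2$ are four distinct vertices of $K$. I then check the remaining three pairs: if $u_1v_2$ were new it would be incident to $u_1\in\CCC$, and together with $u_2v_2$ (new, incident to $u_2\in\CCC$) would contradict Lemma~\ref{lem:newedge} at the common endpoint $v_2$, so $u_1v_2\in E(G)$; by symmetry $u_2v_1\in E(G)$; and $v_1v_2\in E(G)$, else $v_1v_2$ would be a new edge with neither endpoint in $\CCC$. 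Thus $u_1u_2, u_2v_1, v_1v_2, v_2u_1$ all lie in $E(G)$ while the diagonals $u_1v_1$ and $u_2v_2$ do not, so $u_1u_2v_1v_2u_1$ is an induced $4$-cycle of $G$, i.e.\ a hole of $G$ containing the two $\CCC$-vertices $u_1$ and $u_2$, contradicting the NC property of $\CCC$. Therefore all new edges of $K$ share a single $\CCC$-endpoint $u\in K\cap\CCC$, and $K\setminus\{u\}$ is a clique of $G$.

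The only genuinely delicate point in this plan is the key claim, since it is the one place where one has to unfold the multistep construction of $\widehat{G}({\CCC})$ rather than reason about $G$ and $\widehat{G}({\CCC})$ directly. Once that claim is in place, Lemma~\ref{lem:newedge} and the four-vertex computation above force the forbidden induced $C_4$ and finish the proof.
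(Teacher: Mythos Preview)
Your proof is correct and follows essentially the same approach as the paper: establish that $K\cap\CCC$ is a clique in $G$, then use Lemma~\ref{lem:newedge} together with the induced $C_4$ on $u_1u_2v_1v_2$ to reach a contradiction. The only difference is that your argument for the key claim is more elaborate than necessary---once you observe (via iterated Theorem~\ref{thm:chordalize}) that the hole of $G_{i-1}$ containing both $\CCC$-endpoints is already a hole of $G$, the NC property of $\CCC$ in $G$ gives the contradiction directly, without the case split on $j<i$ versus $j>i$.
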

\begin{proof}
Since $K$ is a clique in $\widehat{G}(\CCC)$ but is not a clique in $G$, $K \cap \CCC \neq \emptyset$.
Suppose that $K \cap \CCC$ is not a clique in $G$.
Then there exist two vertices $x$ and $y$ in $K \cap \CCC$ such that $xy \notin E(G)$.
This implies that there exists a hole in $G$ containing both $x$ and $y$, which is impossible by the hypothesis that $\CCC$ satisfies the NC property.
Therefore $K \cap \CCC$ is a clique in $G$.
However, $K$ is not a clique in $G$, so there exist vertices $u \in K \cap \CCC$ and $v \in K \setminus \CCC$ such that $uv$ is a newly added edge.
We claim that every newly added edge whose end vertices belong to $K$ is incident with $u$ by contradiction.
Suppose that there exists a newly added edge $zw$ such that $\{z, w\} \subset K \setminus \{u\}$.
By the definition of $\widehat{G}(\CCC)$, we may assume $z \in \CCC$ and $w \notin \CCC$.
Since $K \cap \CCC$ is a clique in $G$, $zu \in E(G)$.
Then Lemma~\ref{lem:newedge} implies that $v \neq w$, and $uw$ and $zv$ are edges in $G$.
If $vw$ is a newly added edge, then either $v$ or $w$ belongs to $\CCC$, which is not the case.
Therefore $vw \in E(G)$.
Then the cycle $uzvwu$ is obviously a hole in $G$ containing $u$ and $z$, which contradicts the hypothesis that $\CCC$ satisfies the NC property.
Thus we have shown that every newly added edge in $K$ is incident with $u$.
Hence $K \setminus \{u\}$ is a clique in $G$.
\end{proof}

\begin{Cor}\label{cor:newclique}
Let $G$ be a graph with a hole cover $\CCC$ satisfying the NC property.
Then $\omega(\widehat{G}(\CCC)) \le \omega(G) + 1$.
Furthermore the equality holds
if and only if
\begin{itemize}
\item[$(\dag)$] There exists a vertex $u \in \CCC$ such that the set $\left\{\left( \bigcup_{H \in \HHH(G, u)} V(H) \right) \cup N_G(u) \right\} \setminus \{u\}$ contains a maximum clique $K$ of $G$.
\end{itemize}
\end{Cor}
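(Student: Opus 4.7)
The plan is to derive both assertions by a careful application of Theorem~\ref{thm:newclique}. For the inequality, let $K^*$ be a maximum clique of $\widehat{G}(\CCC)$. If $K^*$ happens to be a clique of $G$ already, then $\omega(G) \ge |K^*| = \omega(\widehat{G}(\CCC))$, which is even stronger than the claim. Otherwise, Theorem~\ref{thm:newclique} hands me a vertex $u \in K^* \cap \CCC$ with $K^* \setminus \{u\}$ a clique of $G$, whence $\omega(G) \ge |K^*| - 1 = \omega(\widehat{G}(\CCC)) - 1$.

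For the equality characterization, I would argue both directions. Assuming $(\dag)$, no $v \in K$ equals $u$, and each such $v$ is either a neighbor of $u$ in $G$ or lies on some hole $H \in \HHH(G, u)$; in either case $uv$ belongs to $\widehat{G}(\CCC)$ (already in $G$, or newly added during the local chordalization of $H$ by $u$). Hence $K \cup \{u\}$ is a clique of size $\omega(G) + 1$ in $\widehat{G}(\CCC)$, and combined with the upper bound this forces equality. Conversely, if $\omega(\widehat{G}(\CCC)) = \omega(G) + 1$, then a maximum clique $K^*$ of $\widehat{G}(\CCC)$ has size exceeding $\omega(G)$ and so cannot be a clique of $G$; Theorem~\ref{thm:newclique} supplies $u \in K^* \cap \CCC$ with $K := K^* \setminus \{u\}$ a clique of size $\omega(G)$ in $G$, i.e., a maximum clique not containing $u$. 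For each $v \in K$ the edge $uv$ lies in $\widehat{G}(\CCC)$: if $uv \in E(G)$ then $v \in N_G(u)$; otherwise $uv$ is a newly added edge, and by the definition of local chordalization $v$ must lie on some hole in $\HHH(G, u)$. This places $K$ inside the set in $(\dag)$.

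The only subtle point is confirming that a newly added edge at $u$ in $\widehat{G}(\CCC)$ really does originate from a hole of $\HHH(G, u)$ rather than from some later chordalization. Here I would invoke the order-invariance of Corollary~\ref{cor:chordalize}: it allows me to perform the local chordalization at $u$ first, at which stage the relevant holes are exactly those in $\HHH(G, u)$. Because $\CCC$ satisfies the NC property, no hole of $G$ contains both $u$ and another vertex of $\CCC$, so any later chordalization at $u' \in \CCC \setminus \{u\}$ acts on holes not containing $u$ and introduces no further edges at $u$. This justifies the key step above and completes the argument.
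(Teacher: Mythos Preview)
Your proof is correct and follows the same route as the paper: both derive the inequality and the equality characterization directly from Theorem~\ref{thm:newclique}. Your argument is in fact more detailed than the paper's, which simply asserts the equivalence with $(\dag)$ without spelling out why a newly added edge at $u$ in $\widehat{G}(\CCC)$ necessarily comes from a hole in $\HHH(G,u)$; your use of the order-invariance in Corollary~\ref{cor:chordalize} to justify this point is exactly right.
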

\begin{proof}
By Theorem~\ref{thm:newclique}, $\omega(\widehat{G}(\CCC)) \le \omega(G) + 1$.
Furthermore, by the same theorem,  $\omega(\widehat{G}(\CCC)) = \omega(G) + 1$ if and only if there is a clique $K$ in $\widehat{G}(\CCC)$ of size $\omega(G) + 1$ and there is a vertex $u \in K \cap \CCC$  such that $K \setminus \{u\}$ forms a clique in $G$, which is equivalent to ($\dag$).
\end{proof}


\begin{Thm}\label{thm:knminor}
Let $G$ be a graph with a hole cover $\CCC$ satisfying the NC property.
Then every clique of $\widehat{G}(\CCC)$ is a minor of $G$.
\end{Thm}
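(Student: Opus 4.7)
The plan is to use Theorem~\ref{thm:newclique} to reduce the problem and then build the branch sets explicitly. If $K$ is already a clique in $G$, then taking $X_v = \{v\}$ for each $v \in K$ realizes $K$ as a minor, so assume $K$ is not a clique of $G$. Theorem~\ref{thm:newclique} supplies a vertex $u \in K \cap \CCC$ with $K' := K \setminus \{u\}$ a clique of $G$. Set $A = \{v \in K' : uv \in E(G)\}$ and $B = K' \setminus A$; then the edges $\{uv : v \in B\}$ are precisely the newly added edges of $\widehat{G}(\CCC)$ inside $K$. My target branch sets are $X_v = \{v\}$ for $v \in K'$, together with $X_u = \{u\} \cup \bigcup_{v \in B} (V(P_v) \setminus \{v\})$ for suitably chosen $(u,v)$-paths $P_v$ in $G$ coming from witnessing holes.

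To construct these paths, fix $v \in B$. The edge $uv$ is added during the chordalization step at $u$, so a hole through $u$ and $v$ exists in the graph just before that step; iterating Theorem~\ref{thm:chordalize} shows that every hole of any intermediate graph is a hole of $G$, which yields an honest hole $H_v \in \HHH(G, u)$ containing $v$. The key combinatorial claim is: \emph{at most one vertex of $K \setminus \{u, v\}$ lies on $H_v$, and any such vertex must be a hole-neighbor of $v$ on $H_v$.} Indeed, any $v^* \in K \setminus \{u, v\}$ lying on $H_v$ is $G$-adjacent to $v$ because $K'$ is a clique of $G$, so $v^* v$ must be a hole edge, otherwise it would be a chord; if two such $v_1^*, v_2^*$ existed, they would both be hole-neighbors of $v$, hence non-consecutive on $H_v$ (which has length at least four), yet $G$-adjacent, producing a chord. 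I then take $P_v$ to be whichever $(u,v)$-section of $H_v$ avoids the bad vertex (when one exists), so that the interior of $P_v$ meets $K$ nowhere.

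It remains to confirm that the $X_v$'s form a minor model of $K$ in $G$. The set $X_u$ is connected because each $V(P_v) \setminus \{v\}$ is a $G$-path containing $u$, so all these paths meet at $u$; the branch sets are pairwise disjoint since every interior of a $P_v$ avoids $K$ and distinct singletons are trivially disjoint; and $G$-edges between branch sets are present since $K'$ is a $G$-clique (handling pairs inside $K'$), every $v \in A$ is $G$-adjacent to $u \in X_u$, and every $v \in B$ is $G$-adjacent to its $P_v$-neighbor, which lies in $X_u$. The main obstacle is the combinatorial claim in the second paragraph; once it is in place, everything else is routine bookkeeping.
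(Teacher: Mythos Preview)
Your proof is correct and follows essentially the same approach as the paper: both invoke Theorem~\ref{thm:newclique} to isolate the single vertex $u$, then for each newly added edge $uv$ pick a $(u,v)$-section of a witnessing hole whose interior avoids $K$, and use these paths to realise $K$ as a minor. The only cosmetic differences are that you phrase the construction via branch sets rather than the paper's sequential contractions, and you prove the ``at most one other vertex of $K$ on $H_v$'' fact by the hole-neighbor argument instead of the paper's observation that a hole meets any clique in at most two (necessarily consecutive) vertices.
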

\begin{proof}
Let $K$ be a clique in $\widehat{G}(\CCC)$ of size $n$.
If $K$ is a clique in $G$, then we are done.
Suppose that $K$ is not a clique in $G$.
By Theorem~\ref{thm:newclique}, there exists a vertex $u \in K \cap \CCC$ such that $K \setminus \{u\}$ is a clique in $G$.
Therefore $|V(H) \cap (K \setminus \{u\})| \le 2$ for every $H \in \HHH(G, u)$.
Furthermore, every newly added edge whose end vertices are in $K$ is incident with $u$.

Let $uv_1, \ldots, uv_l$ be the newly added edges whose end vertices are in $K$ and $X = \{v_1, \ldots, v_l\}$.
Take a vertex $v_i \in X$.
Then there exists $H \in \HHH(G, u)$ containing $v_i$.
Since $uv_i$ is a newly added edge, $u$ and $v_i$ are not consecutive on $H$.
Then each of the $(u,v_i)$-sections of $H$ contains at least one internal vertex.
In addition, $V(H) \cap X \subset V(H) \cap (K \setminus \{u\})$.
Since we have shown that $|V(H) \cap (K \setminus \{u\})| \le 2$, $|V(H) \cap X| \le 2$.
Since $v_i  \in V(H) \cap X$, $V(H)$ contains at most one vertex in $X$ other than $v_i$.
Thus one of the $(u,v_i)$-sections of $H$ does not contain any vertex in $K$ as an internal vertex.
Let $P_i$ be such a section.
In $G$, we contract the edges on $P_1$ except the edge incident to $v_1$ to obtain the edge $e_1$ joining $u$ and $v_1$.
Then $P_2$ is transformed to a $(u,v_2)$-walk $W_2$ in the graph $G_1$ resulting from the contractions and still does not contain any vertex in $K$ other than $u$ and $v_2$ by the way of contractions and by the choice of $P_i$.
In $G_1$, we contract the edges on $W_2$ except the edge incident to $v_2$ to obtain the graph $G_2$ and the edge $e_2$ joining $u$ and $v_2$ in $G_2$.
We may repeat this process until we obtain the graph $G_l$ from $G_{l-1}$ and the edge $e_l$ joining $u$ and $v_l$ in $G_l$.
Now, $G_l$ contains the vertices of $K$ and the edges $uv_1, \ldots, uv_l$ so that $K$ is clique of size $n$ in $G_l$.
\end{proof}

\noindent
Now we have the following corollary.

\begin{Cor}\label{cor:knminor}
Let $G$ be a graph with a hole cover $\CCC$ satisfying the NC property.
If $G$ is $K_n$-minor-free, then $\widehat{G}(\CCC)$ is $K_n$-free.
\end{Cor}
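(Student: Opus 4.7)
The plan is to derive this as an essentially immediate contrapositive consequence of Theorem~\ref{thm:knminor}. Assume for contradiction that $\widehat{G}(\CCC)$ is not $K_n$-free, that is, $\widehat{G}(\CCC)$ contains a copy of $K_n$ as a subgraph. Such a copy is precisely a clique of size $n$ in $\widehat{G}(\CCC)$. By Theorem~\ref{thm:knminor}, every clique of $\widehat{G}(\CCC)$ is a minor of $G$, so this clique of size $n$ yields $K_n$ as a minor of $G$, contradicting the hypothesis that $G$ is $K_n$-minor-free. Hence no such clique exists and $\widehat{G}(\CCC)$ is $K_n$-free.

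There is no real obstacle here because Theorem~\ref{thm:knminor} has already done the combinatorial work of translating cliques in $\widehat{G}(\CCC)$ into minors of $G$ via the explicit edge-contraction construction along the sections $P_1, \ldots, P_l$ of holes in $\HHH(G,u)$. The only thing the corollary adds is the contrapositive packaging: being $K_n$-minor-free is preserved in the sense that it rules out $K_n$ even as a \emph{subgraph} of the chordal completion $\widehat{G}(\CCC)$. So the write-up will consist of one short paragraph stating the contrapositive and citing Theorem~\ref{thm:knminor}, with no auxiliary lemmas needed.
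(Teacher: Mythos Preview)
Your proposal is correct and matches the paper's approach exactly: the paper presents this corollary as an immediate consequence of Theorem~\ref{thm:knminor} without even writing out a proof, and your contrapositive argument is precisely the intended reasoning.
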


Now we are ready to give a proof of Theorem~\ref{thm:coloring1}.

\bigskip
\emph{A proof of Theorem~\ref{thm:coloring1}}.
Since $\widehat{G}(C)$ is a chordal completion, $$\chi_{DP}(G) \le \chi_{DP}(\widehat{G}(C)) = \omega(\widehat{G}(C)).$$
By Corollary~\ref{cor:newclique}, $\omega(\widehat{G}(C)) \le \omega(G)+1$, so $\chi_{DP}(G) \le \omega(G)+1$.
Moreover, by Corollary~\ref{cor:knminor}, if $G$ is $K_n$-minor-free, then $\omega(\widehat{G}(C)) \le n-1$ and so $\chi_{DP}(G) \le n-1$.
\hfill {\large $\Box$}

\section{A partial result on the Erd\H{o}s-Faber-Lov\'{a}sz Conjecture}

The following is one of the versions equivalent to the conjecture given by
Erd\H{o}s, Faber, and Lov\'{a}sz in 1972.

\begin{Conj}\label{conj:erdos}
If $G$ is the union of $k$ edge-disjoint copies of $K_k$ for a positive integer $k$, then $\chi(G)=k$.
\end{Conj}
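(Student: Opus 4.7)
The plan is to establish a partial version of Conjecture~\ref{conj:erdos}, namely the case in which the graph $G$ of the stated form also satisfies the NC property, so that the machinery of Section~2 applies. Throughout, I would use the fact that $\omega(G) = k$: the lower bound is immediate from $G \supseteq K_k$, and the upper bound follows from a double-counting argument on edges incident to a hypothetical $(k{+}1)$-clique, combined with the observation that edge-disjoint $K_k$-copies pairwise meet in at most one vertex (two common vertices would force a shared edge). Since $\chi(G) \ge \omega(G) = k$ is automatic, everything reduces to proving $\chi(G) \le k$.

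The strategy is to produce a hole cover $\CCC$ of $G$ satisfying the NC property for which condition $(\dag)$ of Corollary~\ref{cor:newclique} \emph{fails} at every $u \in \CCC$, so that $\omega(\widehat{G}(\CCC)) = \omega(G) = k$ rather than the weaker bound $k+1$. A natural candidate is to take $\CCC$ to consist of vertices each lying in exactly one $K_k$-copy, say $u \in Q_u$. For such a $u$, one has $N_G(u) = V(Q_u)\setminus\{u\}$. Any maximum clique of $G$ lies in some $K_k$-copy $Q'$, and since $|Q_u \cap Q'| \le 1$ by edge-disjointness, $Q'$ contributes at most one vertex to $N_G(u)$; the remaining $k-1$ vertices of $Q'$ would have to come from holes through $u$, which (together with the NC property restricting how holes interact with the $K_k$-structure) can be ruled out. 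Corollary~\ref{cor:newclique} then gives $\omega(\widehat{G}(\CCC)) = k$, and since $\widehat{G}(\CCC)$ is chordal,
\[
    \chi(G) \;\le\; \chi(\widehat{G}(\CCC)) \;=\; \omega(\widehat{G}(\CCC)) \;=\; k,
\]
as required.

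The principal obstacle is the simultaneous existence of a hole cover $\CCC$ meeting all three demands: (i) covering every hole, (ii) satisfying the NC property, and (iii) avoiding condition $(\dag)$ at every chosen vertex. Demand (iii) is the most delicate and rests on a structural understanding of how holes in $G$ pass through and between the $K_k$-copies; in particular, one needs to verify that no hole through $u \in \CCC$ can contribute a full $K_{k-1}$ of $G$ outside of $Q_u$. If such a $\CCC$ cannot be produced under the bare NC hypothesis, a natural fall-back is to use a layered hole cover $\CCC = \CCC_1 \cup \cdots \cup \CCC_{i(G)}$ as in Theorem~\ref{thm:existence} and argue inductively across the layers, at the cost of restricting the partial result to graphs whose non-chordality index is suitably bounded.
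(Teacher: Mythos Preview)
Your proposal contains a fatal structural error. You propose to take $\CCC$ to consist of vertices each lying in exactly one $K_k$-copy $Q_u$, so that $N_G(u)=V(Q_u)\setminus\{u\}$. But this equality says precisely that $u$ is \emph{simplicial}: its neighbourhood is a clique. A simplicial vertex lies on no hole (the two neighbours of $u$ on a hole would have to be nonadjacent), so $\HHH(G,u)=\emptyset$ for every such $u$. Consequently your candidate set $\CCC$ covers no hole at all, and cannot be a hole cover of a non-chordal $G$. The very feature that makes $(\dag)$ easy to rule out at $u$ is exactly what disqualifies $u$ from being useful in a hole cover. Your ``principal obstacle'' is therefore not merely delicate but impossible as stated: demands (i) and (iii) are incompatible for this choice of vertices.

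The paper's route to Theorem~\ref{thm:erdos} exploits the same simplicial vertices, but in the opposite direction. It first proves (Lemma~\ref{lem:erdos1}) that $\omega(G)=k$ and, more importantly, that if $G$ is non-chordal then every maximum clique of $G$ is one of the given $K_k$-copies. Each copy $L_i$ contains a simplicial vertex $v_i$; deleting $\{v_1,\ldots,v_k\}$ yields $G'$ with $\omega(G')=k-1$ (each $L_i$ loses a vertex, and no other $k$-clique exists). The NC property passes to $G'$, so Corollary~\ref{cor:newclique} gives $\omega(\widehat{G'}(\CCC))\le k$. Re-attaching the $v_i$'s (which remain simplicial and have degree $k-1$) produces a chordal supergraph $G^*$ of $G$ with $\omega(G^*)\le k$, hence $\chi_{DP}(G)\le k$. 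The key idea you are missing is this deletion-then-restoration trick: rather than trying to force equality in Corollary~\ref{cor:newclique}, one absorbs the possible ``$+1$'' by first lowering $\omega$ by one.
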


\noindent
In this section, we show that the above conjecture is true for the graphs satisfying the NC property by deriving the following theorem.

\begin{Thm}\label{thm:erdos}
If a graph $G$ satisfying the NC property is the union of $k$ edge-disjoint copies of $K_k$ for a positive integer $k$, then $\chi_{DP}(G)=k$.
\end{Thm}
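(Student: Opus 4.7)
My plan is to prove the two inequalities $\chi_{DP}(G)\ge k$ and $\chi_{DP}(G)\le k$.  The lower bound is immediate: $G$ contains $K_k$ as a subgraph, so $\chi_{DP}(G)\ge\omega(G)\ge k$.  For the upper bound, let $\CCC$ be a hole cover of $G$ satisfying the NC property; then $\widehat{G}(\CCC)$ is a chordal spanning supergraph of $G$, so by $(\S)$ we have $\chi_{DP}(G)\le\chi_{DP}(\widehat{G}(\CCC))=\omega(\widehat{G}(\CCC))$, and it suffices to prove $\omega(\widehat{G}(\CCC))\le k$.

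First I would bound $\omega(G)$ via the De Bruijn--Erd\H{o}s theorem.  Edge-disjointness of the copies $Q_1,\ldots,Q_k$ forces any two of them to share at most one vertex.  For any clique $K\subseteq V(G)$, the traces $T_i:=V(Q_i)\cap K$ form a linear space on $K$: every pair of vertices of $K$ is contained in exactly one $T_i$ of size $\ge 2$, namely the $T_i$ corresponding to the unique $Q_i$ that contains the edge joining the pair.  De Bruijn--Erd\H{o}s then gives $|K|\le k$, so $\omega(G)=k$.  Combined with Corollary~\ref{cor:newclique}, this yields $\omega(\widehat{G}(\CCC))\le k+1$.

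Next, I would rule out $\omega(\widehat{G}(\CCC))=k+1$ by assuming equality and deriving a contradiction from condition $(\dag)$: there is $u\in\CCC$ with $u\notin K$ and a maximum clique $K$ of $G$ satisfying $K\subseteq(V(H_u)\cup N_G(u))\setminus\{u\}$, where $V(H_u):=\bigcup_{H\in\HHH(G,u)}V(H)$.  Setting $K_0:=K\cap N_G(u)$ and $K_1:=K\setminus N_G(u)$, one has $K_1\ne\emptyset$ (otherwise $K\cup\{u\}$ would be a $(k{+}1)$-clique of $G$).  Since $|K|=k$, the extremal case of De Bruijn--Erd\H{o}s applied to the linear cover of $K$ forces $K$ into one of the configurations (I) $K=V(Q_{i_0})$ for some $i_0$, (II) a near-pencil, or (III) a projective plane of order $q$ (possible only when $k=q^2+q+1$).

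The main obstacle will be eliminating each of (I)--(III).  Case (I) illustrates the method: fix $v\in K_1$ and a hole $H_v=u\,a_1\cdots a_p\,v\,b_q\cdots b_1\,u$ lying in $\HHH(G,u)\cap\HHH(G,v)$.  The edges $a_pv$ and $vb_q$ must lie in distinct copies $Q_{j_1},Q_{j_2}$, else the chord $a_pb_q$ would appear; by linearity $V(Q_{j_1})\cap V(Q_{j_2})=\{v\}$.  The hypothesis $v\notin N_G(u)$ rules out any $Q_j$ containing both $u$ and $v$, restricting where $a_p,b_q$ can live, and any $w\in K_0$ occurring on $H_v$ would create the chord $uw$.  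Iteratively applying these chord-chasing constraints along both sides of $H_v$ (using that any $x\in V(Q_{i_0})$ non-consecutive to $v$ on $H_v$ would create the chord $vx$) is expected to force either a direct chord of $H_v$, or to expose a second hole (obtained by replacing the $u$-arc of $H_v$ by an alternative path through the copies containing $u$) that shares two consecutive edges with $H_v$ while avoiding $u$, violating the NC property of $u$.  Cases (II) and (III) are expected to yield to analogous but technically more delicate analyses exploiting their extremal structures.  Once $\omega(\widehat{G}(\CCC))\le k$ is established, the theorem follows.
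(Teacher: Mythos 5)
Your reduction (the lower bound, plus $\chi_{DP}(G)\le\chi_{DP}(\widehat{G}(\CCC))=\omega(\widehat{G}(\CCC))$) and your De Bruijn--Erd\H{o}s argument that $\omega(G)=k$ are fine, and the inequality $\omega(\widehat{G}(\CCC))\le k$ you are aiming for is in fact true; but your proposal does not prove it. Ruling out the equality case $(\dag)$ of Corollary~\ref{cor:newclique} is precisely the hard content of the theorem, and you leave it unproved: case (I) ends with ``is expected to force \dots'', and cases (II) and (III) are deferred to ``analogous but technically more delicate analyses.'' Those cases are not routine: they amount to the ``furthermore'' part of the paper's Lemma~\ref{lem:erdos1}, which shows by a lengthy counting and structure argument that if a maximal clique of size $k$ in $G$ is \emph{not} one of the $k$ copies of $K_k$ (exactly your near-pencil and projective-plane configurations), then $G$ is already chordal --- after which there is nothing left to rule out, since then $\widehat{G}(\CCC)=G$. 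Nothing in your sketch indicates how chord-chasing along a single hole through $u$ and $v$, or a single violation of the NC property of $u$, would reproduce that analysis, so the main step is a genuine gap.

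It is also worth noting that the part of your plan that does work can be done far more cheaply, and that the paper sidesteps $(\dag)$ altogether. In your case (I), if $K=V(Q_{i_0})$, then $Q_{i_0}$ contains a vertex $v$ lying in no other copy (it has $k$ vertices and meets each of the other $k-1$ copies in at most one vertex); every neighbour of $v$ then lies in $Q_{i_0}$, so $v$ is simplicial, hence lies on no hole of $G$, and $(\dag)$ forces $v\in N_G(u)$; the edge $uv$ lies in some copy, which must be $Q_{i_0}$, giving $u\in K$ and contradicting $u\notin K$. The same one-line argument disposes of (II) and (III) as soon as one knows that in the non-chordal case every maximum clique of $G$ is one of the copies --- which is exactly the missing lemma. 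The paper avoids the equality analysis entirely: it deletes one simplicial vertex from each copy, chordalizes the remaining graph $G'$ (where $\omega(G')=k-1$, so Corollary~\ref{cor:newclique} gives clique number at most $k$), and then re-attaches the simplicial vertices to obtain a chordal completion $G^*$ with $\omega(G^*)\le k$. If you want to keep your route, you must supply a complete proof that the near-pencil and projective-plane configurations either cannot occur as maximum cliques of a non-chordal $G$ or force $G$ to be chordal.
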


We start by showing the following lemmas.
A vertex is said to be a \emph{simplicial vertex} if its neighbors form a clique.

\begin{Lem}\label{lem:erdos0}
Let $G$ be a graph and $L$ be a maximal clique of $G$.
Suppose that every vertex in $G-L$ is a simplicial vertex in $G$.
Then $G$ is chordal.
\end{Lem}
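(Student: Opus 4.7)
The plan is to argue by contradiction, exploiting the elementary observation that a vertex lying on a hole cannot be simplicial. Specifically, suppose $G$ is not chordal and let $H = v_1 v_2 \cdots v_n v_1$ with $n \ge 4$ be a hole of $G$. For any vertex $v_i$ on $H$, its two neighbors on $H$, namely $v_{i-1}$ and $v_{i+1}$, are nonadjacent in $G$ because $H$ is induced and has length at least four. Hence the neighborhood of $v_i$ in $G$ is not a clique, so $v_i$ is not simplicial.

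By the hypothesis, every non-simplicial vertex of $G$ lies in $L$. Therefore each $v_i$ must belong to $L$. Since $L$ is a clique, this forces $v_1, v_2, \ldots, v_n$ to be pairwise adjacent in $G$, which contradicts the fact that $H$ is a hole of length at least four (for instance, $v_1 v_3$ would be a chord). This contradiction shows that $G$ has no hole, i.e., $G$ is chordal.

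I do not anticipate any genuine obstacle here; the only point that requires a sentence of justification is the fact that every vertex on a hole has two nonadjacent neighbors, which is immediate from the definition of a hole. The role of the maximality of $L$ in the statement is not actually needed for this proof, so I would simply remark that the conclusion holds already from the assumption that every vertex outside $L$ is simplicial and $L$ is a clique.
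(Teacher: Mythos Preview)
Your proof is correct and is in fact cleaner than the one in the paper. The key observation---that every vertex on a hole has two nonadjacent neighbours on that hole and hence cannot be simplicial---immediately forces $V(H)\subseteq L$, and then the clique property of $L$ gives the contradiction in one line. You are also right that maximality of $L$ plays no role; any clique $L$ with the stated property on $G-L$ suffices.

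The paper takes a slightly longer route: it first bounds $|V(H)\cap L|\le 2$ (since a hole contains no triangle and $L$ is a clique), then argues that $V(H)\setminus L$ must be small because its vertices are simplicial, pins $|V(H)|$ down to exactly four with two vertices in $L$ and two outside, and finally derives a contradiction from this specific configuration. Your argument short-circuits all of this by applying the ``hole vertices are non-simplicial'' observation uniformly to every vertex of $H$ rather than only to those outside $L$. The payoff is a proof that is shorter, avoids the case analysis, and makes transparent that the hypothesis ``$L$ is maximal'' is superfluous.
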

\begin{proof}
It suffices to prove the lemma when $G$ is connected.
Suppose to the contrary that $G$ has a hole $H$.
Since $L$ is complete and $H$ is a hole in $G$, $|V(H) \cap L| \le 2$.
Then $V(H) \setminus L$ forms an induced path in $G$ and, by the hypothesis that any vertex in $G-L$ is a simplicial vertex in $G$, $|V(H) \setminus L| \le 2$.
Since $H$ is a hole, $4 \le |V(H)| = |V(H) \cap L| + |V(H) \setminus L| \le 4$ and so $|V(H) \cap L| = 2$ and $|V(H) \setminus L| = 2$.
Since $V(H) \setminus L := \{u,v\}$ and $V(H) \cap L := \{x,y\}$ are cliques in $G$, $uv$ and $xy$ are edges in $G$.
Since $H$ is a hole, $u$ cannot be a simplicial vertex in $G$ and we reach a contradiction.
\end{proof}

In a graph, we say that \emph{a clique $K$ covers an edge $e$} if $e$ is an edge of $K$.

\begin{Lem}\label{lem:erdos1}
Let $G$ be a union of $k$ edge-disjoint copies of $K_k$ and $\LLL$ be the set of those $k$ copies of $K_k$ for a positive integer $k$. Then $\omega(G) = k$.
Furthermore, if a maximal clique of $G$ with size $k$ does not belong to $\LLL$, then $G$ is chordal.
\end{Lem}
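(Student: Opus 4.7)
The plan is to reduce both assertions of Lemma~\ref{lem:erdos1} to the classical de Bruijn--Erd\H{o}s theorem: if $n$ points are not all collinear and every pair of points lies on exactly one line, then there are at least $n$ lines, with equality precisely when the configuration is a near-pencil or a finite projective plane.

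For the first assertion, $\omega(G) \ge k$ is immediate since each element of $\LLL$ is a $K_k$-clique of $G$. For the reverse inequality, suppose toward a contradiction that $K$ is a clique of $G$ of size $k+1$. Edge-disjointness of the $k$ copies in $\LLL$ together with $G = \bigcup_{L \in \LLL} L$ forces each edge of $K$ to lie in a unique $L \in \LLL$. I would then view $V(K)$ as a set of $k+1$ points and each $V(L) \cap V(K)$ of size at least two as a line; this yields a linear space in which no line contains all $k+1$ points, since $|V(L)| = k$. The de Bruijn--Erd\H{o}s theorem then forces at least $k+1$ lines, contradicting $|\LLL| = k$.

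For the \emph{Furthermore} part, fix a maximal clique $M$ of $G$ with $|M| = k$ and $M \notin \LLL$. Because $M \notin \LLL$, we must have $|V(L) \cap V(M)| \le k-1$ for every $L \in \LLL$: otherwise $V(L) = V(M)$ and $L$ would coincide with the $K_k$ on $V(M)$, namely $M$. Regarding $V(M)$ as $k$ points and $\{V(L) \cap V(M) : L \in \LLL, \, |V(L) \cap V(M)| \ge 2\}$ as lines, I obtain a linear space with at most $k$ lines, none containing all the points. The de Bruijn--Erd\H{o}s theorem forces exactly $k$ lines, placing us in its equality case, which is either a near-pencil or a projective plane. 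In both configurations, any two distinct lines meet in exactly one point, so $|V(L) \cap V(L') \cap V(M)| = 1$ for any two distinct $L, L' \in \LLL$. Combined with the edge-disjointness consequence $|V(L) \cap V(L')| \le 1$, the intersection $V(L) \cap V(L')$ equals exactly one vertex of $V(M)$.

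It follows that any $u \in V(G) \setminus V(M)$ belongs to exactly one $L \in \LLL$, so $N_G(u) = V(L) \setminus \{u\}$ is a clique and $u$ is simplicial in $G$. Applying Lemma~\ref{lem:erdos0} with the maximal clique $M$ then yields that $G$ is chordal. The main step is recognizing that the trace of $\LLL$ on $V(M)$ realizes exactly the equality case of the de Bruijn--Erd\H{o}s theorem; once this dichotomy and its single-intersection property are in hand, both the simpliciality of outside vertices and the conclusion via Lemma~\ref{lem:erdos0} follow routinely.
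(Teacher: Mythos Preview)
Your argument is correct and takes a genuinely different, more conceptual route than the paper.

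The paper proves $\omega(G)\le k$ by direct counting: for a maximal clique $L\notin\LLL$ of size $l$, it fixes a vertex $u^*\in L$ minimizing the number $p$ of cliques in $\LLL$ needed to cover $[u^*,L\setminus\{u^*\}]$, sets $l_i=|L\cap L_i|-1$, and derives the inequalities $p+l_1l_2\le k$ and $p+l_1(p-1)\le k$. These force $l\le k$ via a two-case split ($l_2\ge p$ versus $l_2\le p-1$). The ``furthermore'' part is then handled by analysing when $l=k$, splitting again into $p=2$ and $p\ge 3$; in each case the paper verifies by hand that any two cliques in $\LLL$ meet inside $L$, deduces that every vertex outside $L$ is simplicial, and invokes Lemma~\ref{lem:erdos0}.

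Your approach packages all of this into the de~Bruijn--Erd\H{o}s theorem. The trace of $\LLL$ on the putative clique is a linear space (edge-disjointness gives uniqueness of the line through two points), and the de~Bruijn--Erd\H{o}s bound immediately yields $\omega(G)\le k$. For the second part, equality in de~Bruijn--Erd\H{o}s on the $k$ points of $M$ forces a near-pencil or a projective plane; the paper's cases $p=2$ and $p\ge 3$ are precisely these two configurations (with $k=p^2-p+1$ in the latter). The dual-incidence property ``any two lines meet in one point'', valid in both equality configurations, gives $V(L)\cap V(L')\subset V(M)$ in one stroke, whence simpliciality of outside vertices and Lemma~\ref{lem:erdos0} finish as in the paper.

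What each approach buys: the paper's argument is self-contained and exposes the explicit combinatorics, while yours identifies the structural reason behind the case split and is considerably shorter. One small point you leave implicit: having exactly $k$ lines also forces the map $L\mapsto V(L)\cap V(M)$ to be injective with every trace of size at least two, which is what makes ``any two distinct $L,L'\in\LLL$'' correspond to two distinct lines; this is immediate from edge-disjointness and the count, but worth stating.
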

\begin{proof}
Since $G$ contains $K_k$, $\omega(G) \ge k$.
We prove that any maximal clique of $G$ not belonging to $\LLL$ has size at most $k$ to show $\omega(G) \le k$.
Let $L$ be a maximal clique of $G$ with size $l$ which does not belong to $\LLL$.
For each vertex $u$ in $L$, let $n_u$ be the minimum number of cliques in $\LLL$ needed to cover the edges in the edge cut $[u, L\setminus \{u\}]$.
Since each edge of $G$ is covered by a unique maximal clique in $\LLL$, $n_u$ is the number of cliques in $\LLL$ which share an edge with $L$.
Since $L$ is a maximal clique of $G$ and does not belong to $\LLL$, the edges on $L$ are covered by at least two cliques in $\LLL$ and so $n_{u} \ge 2$ for each $u \in L$.
Now let $u^*$ be a vertex in $L$ with the minimum $p := n_{u^*}$.
By the observation that $n_{u} \ge 2$ for each $u \in L$, $p \ge 2$.
Let $L_1, \ldots, L_p$ be the cliques in $\LLL$ which cover the edges in $[u^*, L\setminus \{u^*\}]$.
Let $l_i = |L \cap L_i| - 1$ for each $i=1, \ldots, p$.
Without loss of generality, we may assume
\begin{equation}\label{eqn:erdos2}
l_1 \ge l_2 \ge \cdots \ge l_p \ge 1.
\end{equation}
Suppose that there exist distinct vertices $u_1$ and $u_2$ in $L \cap L_i$ for some $i \in \{1,\ldots,p\}$ such that an edge in $[u_1, L \setminus L_i]$ and an edge in $[u_2, L \setminus L_i]$ are covered by the same clique $K$ in $\LLL$.
Then $K \neq L_i$.
However, since $K$ is a clique, $u_1u_2$ is covered by $K$, a contradiction to the hypothesis.
Therefore
\begin{itemize}
\item[($\star$)] two edges in $[L \cap L_i,L \setminus L_i]$ are covered by distinct cliques in $\LLL$ for $i=1,\ldots,p$ unless they have a common end in $L \cap L_i$.
\end{itemize}

Since $L_1, \ldots, L_p$ are mutually edge-disjoint,
\begin{align}\label{eqn:erdos1}
l &= \left|L \cap \bigcup_{i=1}^pL_i\right|
= \left|\left( \bigcup_{i=1}^p  (L \cap L_i) \setminus \{u^*\}\right) \cup \{u^*\}\right| \notag \\
&=\sum_{i=1}^p |(L \cap L_i) \setminus \{u^*\}|+1 = \sum_{i=1}^p l_i + 1.
\end{align}
Since $p \ge 2$, $L_1$ and $L_2$ exist.
Each edge in $[(L \cap L_1)\setminus\{u^*\}, (L \cap L_2)\setminus\{u^*\}]$ is covered by exactly one clique in $\LLL$ by the hypothesis.
Since any edge in $[(L \cap L_1)\setminus\{u^*\}, (L \cap L_2)\setminus\{u^*\}]$ is not incident to $u^*$, any clique in $\LLL$ covering an edge in $[(L \cap L_1)\setminus\{u^*\}, (L \cap L_2)\setminus\{u^*\}]$ cannot be $L_i$ for any $i=1,\ldots,p$.
Therefore we need at least $p+l_1l_2$ cliques in $\LLL$ to cover the edges in $[u^*,L\setminus\{u^*\}] \cup [(L \cap L_1)\setminus\{u^*\}, (L \cap L_2)\setminus\{u^*\}]$ and so
\begin{equation}\label{eqn:erdos3}
p + l_1l_2 \le |\LLL| = k.
\end{equation}
For each vertex $u$ in $L \cap L_1$, $n_u \ge p$ and so there are at least $p$ cliques in $\LLL$ needed to cover the edges in $[u,L \setminus \{u\}]$.
By ($\star$), we need at least $p + l_1(p-1)$ distinct cliques in $\LLL$ to cover the edges in $[u^*,L\setminus\{u^*\}] \cup [(L \cap L_1) \setminus \{u^*\}, L \setminus L_1]$ and so
\begin{equation}\label{eqn:erdos4}
p + l_1(p-1) \le k.
\end{equation}


If $l_2 \ge p$, then
\begin{align*}
l &= \sum_{i=1}^p l_i + 1 \tag{by \eqref{eqn:erdos1}} \\
&\le l_1p + 1 \tag{by \eqref{eqn:erdos2}} \\
&< l_1l_2 + p \tag{by the case assumption  and the fact that $p \ge 2$}\\
&\le k \tag{by \eqref{eqn:erdos3}}.
\end{align*}
Therefore we have shown that $l < k$ if $l_2 \ge p$ and so the ``furthermore'' part is vacuously true.

Now assume $l_2 \le p-1$.
Then
\begin{align*}
l &= \sum_{i=1}^p l_i  + 1 \tag{by \eqref{eqn:erdos1}} \\
&\le (p-1)l_1 + l_2 + 1 \tag{by \eqref{eqn:erdos2}} \\
&\le (p-1)l_1 + p \tag{by the assumption that $l_2 \le p-1$} \\
&\le k \tag{by \eqref{eqn:erdos4}}
\end{align*}
To show the ``furthermore'' part, suppose $l=k$.
Then each of the three inequalities above becomes the equality.
Now, if $p=2$, then $l_2 = 1$ and $l = l_1 + l_2 + 1 = l_1 + 2 = k$, which implies $l_1 = k-2$.
If $p \ge 3$, then, by \eqref{eqn:erdos2}, $l_1 = \cdots = l_p = p-1$ and $k=p^2-p+1$.

\noindent
{\it Case 1}. $p=2$.
Let $L \cap L_1 = \{u^*, u_1, u_2, \ldots, u_{k-2}\}$ and $L \cap L_2 = \{u^*, v\}$.
Since $u_i$ and $v$ belong to $L$, $u_iv$ is an edge in $G$ for each $i=1,\ldots,k-2$.
Since $\LLL$ is an edge clique cover of $G$, there is a clique in $\LLL$ covering the edge $u_iv$ for each $i=1, \ldots, k-2$.
By ($\star$), no clique in $\LLL$ contains $u_i, u_j, v$ for $1 \le i < j \le k-2$.
Therefore, by relabelling the cliques in $\LLL$ if necessary, we may assume $L_{i+2}$ is a clique covering $u_iv$ for each $i=1, \ldots, k-2$.
Then $(L_1 \cap L_2) \cap L = \{u^*\}$, $(L_1 \cap L_i) \cap L = \{u_{i-2}\}$ for $i=3,\ldots,k$, and $(L_i \cap L_j) \cap L = \{v\}$ for $2 \le i < j \le k$.
Therefore $L_i$ and $L_j$ share exactly one vertex in $L$ for distinct $i, j$ in $\{1,\ldots,k\}$


\noindent
{\it Case 2}. $p \ge 3$.
Then $l_1 = \cdots = l_p = p-1$ and $k=p^2-p+1$.
Let $L \cap L_1 = \{u^*, v_1, \ldots, v_{p-1}\}$ and $L \cap L_2 = \{u^*, w_1, \ldots, w_{p-1}\}$.
Since $L$ is a clique in $G$, $v_i$ and $w_j$ are adjacent in $G$
and the edge $v_iw_j$ must be covered by a clique in the edge clique cover $\LLL$ for any $i,j \in \{1,\ldots,p-1\}$.
Let $K_{i,j} \in \LLL$ be a clique which covers the edge $v_iw_j$ for $i,j \in \{1,\ldots,p-1\}$ and let $\KKK = \{K_{i,j} \mid i,j \in \{ 1, \ldots, p-1\} \}$.
Suppose $K_{i,j} = L_t$ for some $i,j \in \{1,\ldots,p-1\}$ and  $t \in \{1,\ldots,p\}$.
Then the edges $u^*w_j \in [L \cap L_1, L \setminus L_1]$ and $v_iw_j \in [L \cap L_1, L \setminus L_1]$ are covered by $K_{i,j}$, which is impossible by ($\star$).
Therefore $K_{i,j}$ cannot be any of $L_1, \ldots, L_p$.
By ($\star$), $K_{i,j} \neq K_{i',j'}$ if $(i,j) \neq (i',j')$.
Therefore $|\KKK| = (p-1)^2$ and
\[
|\{L_1, \ldots, L_p\} \cup \KKK| = p + (p-1)^2 = p^2 - p + 1.
\]
Since $|\LLL| = k$  $ = p^2 - p + 1$, $\LLL = \{L_1, \ldots, L_p\} \cup \KKK$.

To apply Lemma~\ref{lem:erdos0}, we first claim that $M \cap N \subset L$ for any distinct cliques $M, N \in \LLL$.
Take two distinct cliques $M$ and $N$ in $\LLL$.
If $M$ and $N$ belong to $\{L_1,\ldots, L_p\}$, then $M \cap N = \{u^*\} \subset L$.
Suppose that one of $M$ and $N$ is in $\{L_1,\ldots, L_p\}$ and the other is in $\KKK$.
Without loss of generality, we may assume $M = L_t := \{u^*, x_1, \ldots, x_{p-1}\}$ and $N = K_{i,j}$ for some $t \in \{1,\ldots,p\}$ and $i,j \in \{1,\ldots,p-1\}$.
By the hypothesis that the cliques in $\LLL$ are mutually edge-disjoint, 
\[
L_t \cap K_{i,j} =
\begin{cases}
  \{v_i\} & \mbox{if $t=1$} \\
  \{w_j\} & \mbox{if $t=2$}.
\end{cases}
\]
Therefore $M \cap N = L_t \cap K_{i,j} \subset L$ for $t=1,2$.
Assume $3 \le t \le p$.
Note that
\begin{equation}\label{eqn:star1}
E_{1t} := [\{v_1, \ldots, v_{p-1}\}, \{ x_1, \ldots, x_{p-1}\}] \subset [L \cap L_1, L \setminus L_1] \cap [L \cap L_t, L \setminus L_t].
\end{equation}
Suppose that an edge $v_rx_s$ is covered by $L_a$ for some $a \in \{1,\ldots,p\}$.
Then the edges $u^*x_s$ and $u^*v_r$ are covered by $L_a$.
However, $u^*$ and $v_r$ belong to $L \cap L_1$,
$\{u^*x_s, v_rx_s\} \in [L \cap L_1, L \setminus L_1]$, and we reach a contradiction to ($\star$).
Therefore each edge in $E_{1t}$ should be covered by a clique in $\KKK$.
Since $\KKK \subset \LLL$, it follows from ($\star$) that each clique in $\KKK$ covers at most one edge in $E_{1t} \subset [L \cap L_1, L \setminus L_1] \cap [L \cap L_t, L \setminus L_t]$.
Since $|\KKK| = (p-1)^2 = |E_{1t}|$, each clique in $\KKK$ covers exactly one edge in $E_{1t}$.
Therefore $K_{i,j}$ covers $v_rx_s$ for some $r,s \in \{1,\ldots,p-1\}$.
Thus $L_t \cap K_{i,j}$ contains the vertex $x_s$.
By the hypothesis that the cliques in $\LLL$ are mutually edge-disjoint, $L_t \cap K_{i,j} = \{x_s\} \subset L$.
Hence $M \cap N \subset L$ for $M = L_t$ and $N = K_{i,j}$.
Finally we suppose that $M$ and $N$ belong to $\KKK$.
Then $M = K_{i,j}$ and $N = K_{i', j'}$ for some $i,i',j,j' \in \{1, \ldots, p-1\}$ with $(i,j) \neq (i',j')$.
If $i=i'$, then $M \cap N= \{v_i\} \subset L$ by the hypothesis.
Suppose $i \neq i'$.
Take a vertex $y \in L \setminus L_1$.
Since $L$ is a clique and $\{v_i, v_{i'}, y\} \subset L$,
$v_iy$ and $v_{i'}y$ are edges of $G$ and should be covered by cliques in $\LLL$.
We note that $L_b$ covers $u^*y$ if $L_b$ covers $v_iy$ or $v_{i'}y$ for any $b \in \{1,\ldots,p\}$.
Therefore, by the hypothesis that the cliques in $\LLL$ are mutually edge-disjoint, $v_iy$ and $v_{i'}y$ are covered by cliques in $\KKK$. 
Let $K_{c,d}$ be a clique in $\KKK$ covering $v_iy$.
Then $v_c, v_i, y$ belong to $K_{c,d}$.
Since $K_{c,d}$ is a clique, $v_c$ and $y$ are adjacent.
Then $v_cy$ and $v_iy$ belong to $[L \cap L_1, L \setminus L_1]$ and are covered by $K_{c,d}$.
Thus, by ($\star$), $v_i=v_c$ and so $i=c$.
Similarly, $v_{i'}y$ is covered by $K_{i',d'}$ for some $d' \in \{1,\ldots,p-1\}$.
By the hypothesis on $\LLL$, $K_{i,d}$ and $K_{i',d'}$ are the unique cliques in $\LLL$ covering $v_iy$ and $v_{i'}y$, respectively.
As $K_{i,d}$ and $K_{i',d'}$ are uniquely determined by $y$, we may denote $K_{i,d}$ and $K_{i',d'}$ by $A(y)$ and $B(y)$, respectively.
Now we define a function $F : L \setminus L_1 \to \{(K_{i,q}, K_{i',q'}) \mid 1 \le q, q' \le p-1\}$ by $F(y) = (A(y), B(y))$ for $y \in L \setminus L_1$.
Then $F$ is well-defined.
By the hypothesis on $\LLL$ again, $A(y) \cap B(y) = \{y\}$ for each $y \in L \setminus L_1$ and so $F$ is injective.
Since the domain and the codomain of $F$ have the same cardinality $(p-1)^2$, $F$ is bijective.
Since $M$ and $N$ belong to $\KKK$, $(M,N)$ is contained in the codomain of $F$ and so there exists a vertex $z \in L \setminus L_1$ such that $F(z) = (M,N)$.
Then $M = A(z)$ and $N = B(z)$, so $M \cap N = A(z) \cap B(z) = \{z\} \subset L$.
Hence we have shown that $M \cap N \subset L$ for any distinct cliques $M$ and $N$ in $\LLL$.

In both cases, we have shown that $M \cap N \subset L$ for any distinct cliques $M$ and $N$ in $\LLL$.
Now we will show that every vertex in $G - L$ is simplicial in $G$.
Take a vertex $v$ in $G - L$.
Suppose to the contrary that $v$ is not a simplicial vertex in $G$.
Then $v$ has two neighbors $z_1$ and $z_2$ which are nonadjacent in $G$.
Since $\LLL$ is an edge clique cover of $G$, $\LLL$ contains a clique covering $vz_1$ and a clique covering $vz_2$.
Since $z_1$ and $z_2$ are nonadjacent, these two cliques are distinct.
However, they share a vertex $v$ which is not in $L$.
This contradicts our claim that the intersection of any two cliques in $\LLL$ is a subset of $L$.
Therefore every vertex in $G-L$ is a simplicial vertex in $G$.
Thus, by Lemma~\ref{lem:erdos0}, $G$ is chordal.
\end{proof}


\emph{A proof of Theorem~\ref{thm:erdos}.}
Let $G$ be a graph satisfying the NC property which is  the union of $k$ edge-disjoint copies $L_1, \ldots, L_k$ of $K_k$.
Obviously $\chi_{DP}(G) \ge k$.
By Lemma~\ref{lem:erdos1}, $\omega(G) = k$.
Let $\LLL = \{L_1, \ldots, L_k\}$.
Then $\LLL$ is an edge clique cover consisting of cliques of size $k$.

Fix $i \in \{1,\ldots,k\}$.
Then $|L_i \cap L_j| \le 1$ for any $j \in \{1,\ldots,k\} \setminus \{i\}$.
Since $L_i$ has $k$ vertices, $L_i$ has a vertex $v$ not contained in $L_j$ for any $j \in \{1,\ldots,k\} \setminus \{i\}$.
Then $v$ is a simplicial vertex of $G$.
Since $i$ is arbitrarily chosen, $L_i$ has a simplicial vertex for any $i=1,\ldots,k$.

If $G$ is chordal, then $\chi_{DP}(G) = \omega(G) = k$ by ($\S$).
Now we suppose that $G$ is non-chordal.
Then, by the ``furthermore part'' of Lemma~\ref{lem:erdos1}, any clique not belonging to $\LLL$ has size less than $k$.
Since $L_i$ has a simplicial vertex of $G$, we may take a simplicial vertex from $L_i$ and denote it by $v_i$ for each $i=1,\ldots,k$.
Let $G' = G - \{v_1, \ldots, v_k\}$.
Then $G'$ still satisfies the NC property.
Since any clique not belonging to $\LLL$ has size less than $k$, $\omega(G') = k-1$.
Let $\CCC$ be a hole cover of $G'$ satisfying the NC property.
Then $\widehat{G'}(\CCC)$ is chordal by definition and, by Corollary~\ref{cor:newclique}, $\omega(\widehat{G'}(\CCC)) \le \omega(G') + 1 = k$.
Let $G^*$ be the graph obtained from $\widehat{G'}(\CCC)$ by adding the vertices $v_1, \ldots, v_k$ and the edges which were incident to $v_1, \ldots, v_k$ in $G$.
Then $G$ is a spanning subgraph of $G^*$.
Since $v_1, \ldots, v_k$ are simplicial vertices of $G$, they are still simplicial vertices of $G^*$.
Therefore, the fact that $\widehat{G'}(\CCC)$ is chordal implies that $G^*$ is chordal.
Moreover, we note that exactly $k-1$ edges are added for $v_i$ for each $i=1,\ldots,k$ to obtain $G^*$ from $\widehat{G'}(\CCC)$.
Then, since $\omega(\widehat{G'}(\CCC)) \le k$,
\[
k \le \chi_{DP}(G) \le \chi_{DP}(G^*) = \omega(G^*) \le k
\]
and so $\chi_{DP}(G) = k$.
\hfill {\large $\Box$}

\section{A minimal chordal completion of a graph}

\subsection{Non-chordality indices of graphs}

Given a graph $G$, we apply a sequence of local chordalizations to obtain a chordal completion $G^*$ of $G$ as follows:
Let $\CCC =\{v_1, \ldots, v_l\}$ be a hole cover of $G$ and $G_0 = G_0^* = G - \CCC$.
By the definition of hole cover, $G_0^*$ is chordal.
Let $G_1$ be the graph with
\[
V(G_1) = V(G_0^*) \cup \{v_1\} \quad \text{and} \quad  E(G_1) = E(G_0^*) \cup E \left( G - \bigcup_{j=2}^l \{v_j\}\right).
\]
Obviously $\{v_1\}$ is a hole cover of $G_1$ satisfying the NC property.
By Corollary~\ref{cor:chordalize}, we obtain the chordal graph $G_1^* = \widehat{G_1}(\{v_1\})$.
Let $G_2$ be the graph with
\[
V(G_2) = V(G_1^*) \cup \{v_2\} \quad \text{and} \quad  E(G_2) = E(G_1^*) \cup E \left( G - \bigcup_{j=3}^l \{v_j\}\right).
\]
Again, $\{v_2\}$ is a hole cover of $G_2$ satisfying the NC property.
Let $G_2^* = \widehat{G_2}(\{v_2\})$ and we repeat this process until we obtain the chordal graph $G_l^* = \widehat{G_{l}}(\{v_{l}\})$ as a desired graph $G^*$.
Then $G_l^*$ is a chordal completion of $G$.
We note that if $G$ is chordal, then $G=G_l^*$.
Now we have shown the following theorem.

In the rest of this paper, for the notation $\bigcup_{j=p}^q S_j$ of a finite union of sets, we assume that it refers to an empty set if $p>q$.

\begin{Thm}\label{thm:existence}
Let $G$ be a graph with a hole cover $\CCC$.
Then $\CCC$ can be partitioned into $\CCC_1, \ldots, \CCC_k$ for some positive integer $k$ so that
\begin{itemize}
  \item[(i)] $\CCC_i$ is a hole cover of the graph $G_{i}$ satisfying the NC property,
  \item[(ii)] $G_{i}^*$ is chordal,
\end{itemize}
where $G_0 = G_0^* = G - \CCC$; $G_{i}$ is the graph defined by $V(G_i) = V(G_{i-1}^*) \cup \CCC_{i}$,
      \[
         E(G_i) =
         E(G_{i-1}^*) \cup E \left( G - \bigcup_{j=i+1}^k \CCC_j\right),
   \]
and $G_i^* = \widehat{G_i}(\CCC_i)$ for each $i=1,\ldots,k$.
\end{Thm}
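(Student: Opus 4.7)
The plan is to establish the partition by taking the trivial finest partition: enumerate $\CCC = \{v_1,\ldots,v_l\}$ in any order and set $\CCC_i := \{v_i\}$ for $i=1,\ldots,l$, so that $k = l$. This is exactly the construction already hinted at in the paragraph preceding the theorem; the task is to verify that (i) and (ii) hold along this sequence. The argument proceeds by induction on $i$, with the key auxiliary claim being the identity $G_i - v_i = G_{i-1}^*$.

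First I would establish the base step. Since $\CCC$ is a hole cover of $G$, the graph $G_0 = G_0^* = G - \CCC$ contains no hole and is therefore chordal. For the inductive step, assume $G_{i-1}^*$ is chordal. I would first verify that $V(G_{i-1}^*) = V(G) \setminus \{v_i, v_{i+1}, \ldots, v_l\}$ by straightforward induction from $V(G_0^*) = V(G)\setminus \CCC$ and the recursion $V(G_j) = V(G_{j-1}^*) \cup \{v_j\}$. Then I would check the edge sets: by definition,
\[
E(G_i) = E(G_{i-1}^*) \cup E\!\left(G - \bigcup_{j=i+1}^l\{v_j\}\right),
\]
and the second term equals $E(G[V(G_{i-1}^*) \cup \{v_i\}])$. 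Removing $v_i$ strips away exactly the edges of this second term that are incident to $v_i$, leaving $E(G[V(G_{i-1}^*)])$, which is already contained in $E(G_{i-1}^*)$ because $G_{i-1}^*$ is a supergraph of $G_{i-1}$ and $E(G_{i-1}) \supseteq E(G[V(G_{i-1}^*)])$ by the analogous recursion one level down. Hence $G_i - v_i = G_{i-1}^*$.

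With this identity in hand, condition (i) is immediate. Any hole in $G_i$ must contain $v_i$: otherwise it would lie in $G_i - v_i = G_{i-1}^*$, which is chordal by the induction hypothesis. This shows $\CCC_i = \{v_i\}$ is a hole cover of $G_i$. The NC property for this singleton is then automatic: every hole of $G_i$ is in $\HHH(G_i, v_i)$, so there is no hole outside $\HHH(G_i, v_i)$ and the non-consecutive-edge condition is vacuously satisfied, while the ``at most one vertex of $\CCC_i$ per hole'' requirement is trivial since $|\CCC_i|=1$. Finally, condition (ii) follows from Corollary~\ref{cor:chordalize}: since $\CCC_i$ is a hole cover of $G_i$ satisfying the NC property, the graph $G_i^* = \widehat{G_i}(\CCC_i)$ is chordal, which closes the induction and produces the required partition.

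The only real content is the edge-set bookkeeping that forces $G_i - v_i = G_{i-1}^*$, and that is where I would be most careful; everything else is a direct appeal to Corollary~\ref{cor:chordalize}. There is no genuine obstacle, since the theorem is an existence statement with no minimality claim on $k$ (the minimality question is deferred to the subsequent definition of $i(G)$).
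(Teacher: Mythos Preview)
Your proposal is correct and follows essentially the same approach as the paper: the paper's proof (given in the paragraph immediately preceding the theorem) also takes the singleton partition $\CCC_i=\{v_i\}$ and inductively appeals to Corollary~\ref{cor:chordalize}. Your write-up is in fact more careful than the paper's, which simply asserts that ``obviously $\{v_i\}$ is a hole cover of $G_i$ satisfying the NC property'' without spelling out the identity $G_i - v_i = G_{i-1}^*$ that you justify explicitly.
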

\noindent
Let $G$ be a graph with a hole cover $\CCC$.
We call an ordered partition $(\CCC_1, \ldots, \CCC_k)$ of a hole cover $\CCC$ satisfying the
conditions (i) and (ii) in Theorem~\ref{thm:existence} a \emph{local chordalization partition} of $\CCC$.
Then the graphs $G_i, G_i^*$ are uniquely determined by the given local chordalization partition $\tilde{\CCC} := (\CCC_1, \ldots, \CCC_k)$ of $\CCC$.
We call the process of obtaining $G_i$ and $G_i^*$ the \emph{chordalization chain} corresponding to $\tilde{\CCC}$.
Especially, we write the process of obtaining $G_i$ from $G_{i-1}^*$ as $G_{i-1}^* <_{\CCC_i} G_i$ (in the context that $G_{i-1}^*$ is a proper subgraph of $G_i$, we use ``strictly less'' notation) for $i=1, \ldots, k$.
Then the chordalization chain corresponding to $\tilde{\CCC}$ may be represented as
\[
G_0 = G_0^* <_{\CCC_1} G_1 \le G_1^* <_{\CCC_2}  G_2 \le G_2^* < \cdots <_{\CCC_k} G_k \le G_k^*.
\]
We note that $G_k^*$ is a chordal completion of $G$.
By the way, the last chordal completion in the chordalization chain corresponding to $\tilde{\CCC}$ is a minimal chordal spanning supergraph of $G$.

\begin{Prop}\label{prop:minimal}
Let $G$ be a graph,
$\tilde{\CCC} = (\CCC_1, \ldots, \CCC_\ell )$ be a local chordalization partition of a hole cover $\CCC$ of $G$,
and $G^*$ be the last graph in the chordalization chain corresponding to $\tilde{\CCC}$.
Then $G^*$ is a minimal chordal completion of $G$.
\end{Prop}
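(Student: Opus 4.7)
The plan is to establish minimality edge by edge: for every fill edge $uv \in E(G^*) \setminus E(G)$, I would exhibit a chordless $4$-cycle in $G^*$ through $uv$ whose only chord is $uv$ itself, so that $G^* - uv$ contains a hole. Since $G^*$ is already a chordal completion of $G$ by Theorem~\ref{thm:existence}, this suffices.

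Fix such an edge $uv$ and let $i \in \{1, \ldots, \ell\}$ be the smallest index with $uv \in E(G_i^*)$. Because $uv \notin E(G)$, the edge $uv$ must arise from the local chordalization $\widehat{G_i}(\CCC_i)$, so after swapping $u$ and $v$ if necessary we may take $u \in \CCC_i$ and pick a hole $H_0 \in \HHH(G_i, u)$ on which $u$ and $v$ are non-consecutive. Writing $H_0 = u\,u_1\,u_2\,\cdots\,u_m\,u$ with $m \ge 3$, let $v = u_j$ with $2 \le j \le m-1$ and set $a := u_{j-1}$, $b := u_{j+1}$. The vertices $u, a, v, b$ are pairwise distinct, and the natural candidate is the $4$-cycle $C := u\,a\,v\,b\,u$.

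For the edges of $C$: the edges $av$ and $vb$ lie on $H_0$, hence in $E(G_i) \subseteq E(G^*)$; the edges $ua$ and $ub$ are either edges of $H_0$ (when $j = 2$ or $j = m-1$) or are produced when $u$ locally chordalizes $H_0$, so they lie in $E(G_i^*) \subseteq E(G^*)$. Thus $C$ is a $4$-cycle of $G^*$, and its only potential chords are $uv$ and $ab$. Once $uv$ is removed, it remains to show $ab \notin E(G^*)$. Since $H_0$ is induced in $G_i$, $ab \notin E(G_i)$; as $a, b \in V(G_i)$, the definition of $E(G_i)$ from Theorem~\ref{thm:existence} then forces $ab \notin E(G)$. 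By the NC property of $\CCC_i$ in $G_i$ we have $V(H_0) \cap \CCC_i = \{u\}$, so $a, b \notin \CCC_i$; and for every $s > i$ the construction of $G_s$ gives $V(G_i) \cap \CCC_s = \emptyset$, so $a, b \notin \CCC_s$ either. Since every edge added to $G_{i-1}^*$ in the remainder of the chain is either an edge of $G$ (ruled out above) or is incident to a vertex of $\CCC_i \cup \cdots \cup \CCC_\ell$, no step can create $ab$.

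The principal obstacle is this last bookkeeping step excluding $ab$ from $E(G^*)$: we must rule out the creation of $ab$ by any later local chordalization. The argument rests on the pair of observations that, by the NC property, $\CCC_i$ meets $V(H_0)$ only at $u$, and that, by construction, no vertex of a later $\CCC_s$ is even present in $V(G_i)$. With $ab \notin E(G^*)$ in hand, $C$ is a chordless $4$-cycle of $G^* - uv$, witnessing that $G^* - uv$ is non-chordal; since $uv$ was an arbitrary fill edge, $G^*$ is a minimal chordal completion of $G$.
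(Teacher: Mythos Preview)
Your argument is correct and takes a genuinely different route from the paper. The paper fixes an arbitrary spanning supergraph $H$ of $G$ with $H \subsetneq G^*$, locates the smallest level $s$ at which $\CCC_s$ meets the deleted edge set $E(G^*)\setminus E(H)$, and argues that a hole of $G_s^*-B$ survives as a hole of $H$; this is self-contained and handles the removal of many fill edges at once. You instead show, for each single fill edge $uv$, that $uv$ is the unique chord of an explicit $4$-cycle $uavbu$ in $G^*$ (the key step being that $a,b\notin\bigcup_{s\ge i}\CCC_s$, so $ab$ can never be created), and then conclude minimality from the fact that $G^*-uv$ is non-chordal for every fill edge. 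Your route is more modular and the per-edge witness is pleasantly explicit; the trade-off is that the implication ``$G^*-e$ non-chordal for every fill edge $e$ $\Rightarrow$ $G^*$ is a minimal chordal completion'' is a genuine theorem (Rose--Tarjan--Lueker; see the Heggernes survey~\cite{heggernes2006minimal} already cited in the paper), not a tautology, so you should name it rather than write only ``this suffices''.
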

\begin{proof}
Let $H$ be a graph that is a spanning supergraph of $G$ and a proper subgraph of $G^*$.
Then $E(G^*) \setminus E(H) \neq \emptyset$.
By definition, each edge of $E(G^*) \setminus E(H)$ is incident to one of vertices in $\CCC$.
Let $s$ be the smallest index such that some vertices in $\CCC_s$ are incident to edges in  $E(G^*) \setminus E(H)$.
Now let $B$ be the set of edges in $E(G^*) \setminus E(H)$ which are incident to vertices in $\CCC_s$.
By the definition of local chordalization, $G_s^* - B$ is not chordal.
Thus there exists a hole  $C$ in $G_s^* - B$.
By the choice of $s$, the edges in $E(G^*)\setminus E(G_s^*)$ are incident to vertices in $\bigcup_{j=s+1}^{\ell} \CCC_{j}$.
By definition, $(\bigcup_{j=s+1}^{\ell} \CCC_{j}) \cap V(G_s^*) =\emptyset$.
Since $V(G_s^*)=V(G_s^*-B)$, the edges in $E(G^*) \setminus E(G_s^*)$ cannot be chords of $C$.
Since $E(H) \subset E(G^*)$, the edges in $E(H) \setminus E(G_s^*)$ cannot be chords of $C$.
Therefore $C$ is a hole in $H$ and so $H$ is not  chordal.
Hence we have shown that $G^*$ is a minimal chordal completion of $G$.
%
\end{proof}

Now we are ready to introduce a parameter of a graph which measures the number of steps of adding new edges to reach one of its chordal completion.

\begin{Defi}\label{def:non-chordality index}
The \emph{non-chordality index} of a graph $G$, denoted by ${i}(G)$, is defined as follows:
If $G$ is chordal, $i(G) = 0$.
If $G$ is not chordal, then $i(G)$ is defined to be the smallest $k$ over all the hole covers of $G$ in Theorem~\ref{thm:existence}.
\end{Defi}

\begin{Rem}\label{rmk:ncproperty}
A graph $G$ satisfies the NC property if and only if $G$ satisfies $i(G) \le 1$.
\end{Rem}

\begin{Ex}
We consider the graph $G$ given in Figure~\ref{fig:noneg}.
Since $G$ does not satisfy the NC property, $i(G) \ge 2$ by Remark~\ref{rmk:ncproperty}.
It is easy to check that $\CCC = \{u, v, w, x\}$ is a hole cover of $G$.
See Figure~\ref{fig:index} for an illustration.
Since $G_2^*$ is a chordal completion of $G$, $i(G) \le 2$.
Thus $i(G) = 2$.

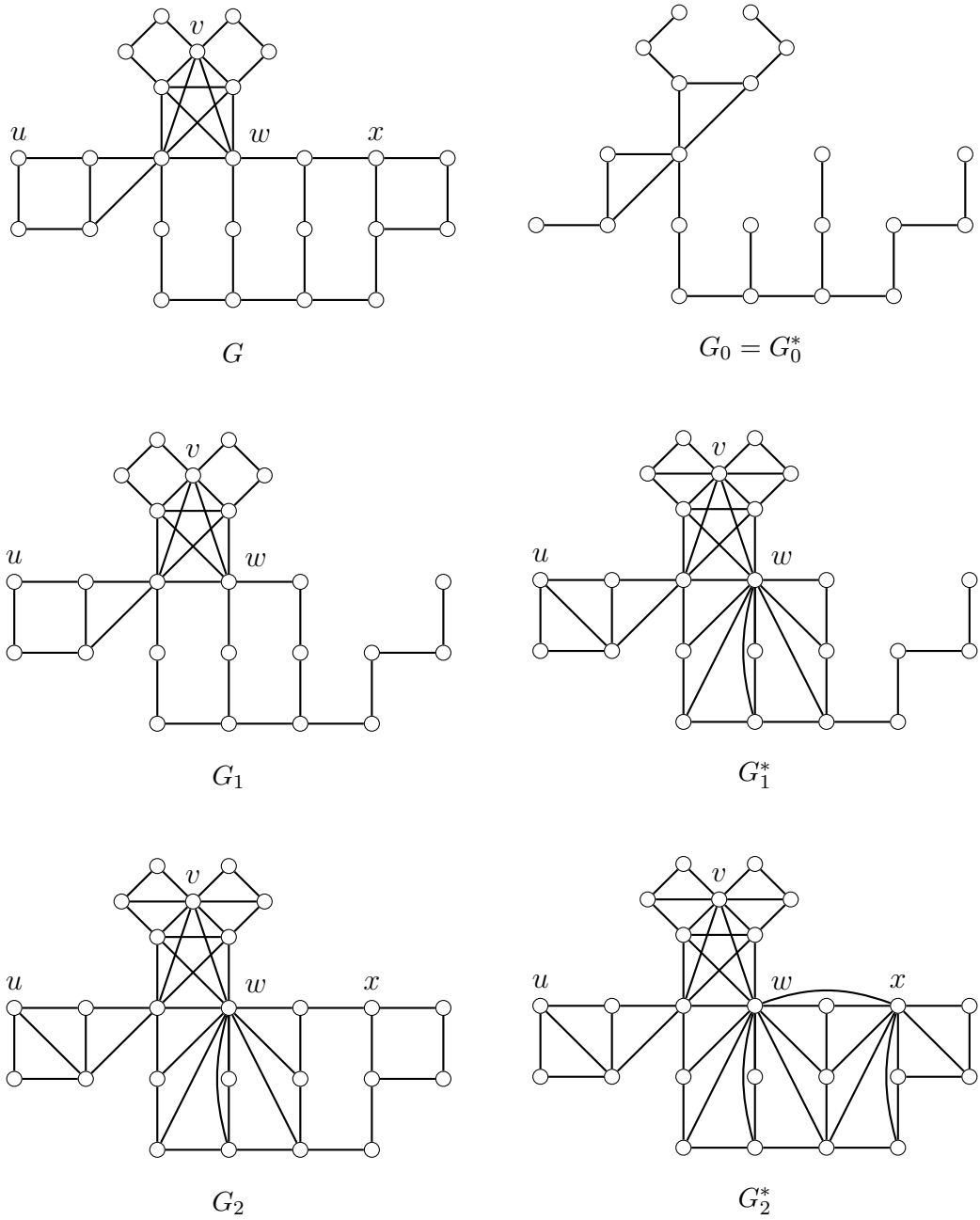
\begin{figure}
\begin{center}

\begin{tikzpicture}[x=1cm, y=1cm]

    \draw (3,-2.75) node{\small $G$};

    \vertex (a1) at (0,0) [label=above:$u$]{};
    \vertex (a2) at (0,-1) [label=above:$$]{};
    \vertex (a3) at (1,-1) [label=above:$$]{};
    \vertex (a4) at (1,0) [label=above:$$]{};

    \vertex (b1) at (2,0) [label=above:$$]{};
    \vertex (b2) at (2,-1) [label=above:$$]{};
    \vertex (b3) at (2,-2) [label=above:$$]{};

    \vertex (b4) at (3,0) [label=above right:$w$]{};
    \vertex (b5) at (3,-1) [label=right:$$]{};
    \vertex (b6) at (3,-2) [label=right :$$]{};

    \vertex (b7) at (4,0) [label=above:$$]{};
    \vertex (b8) at (4,-1) [label=above:$$]{};
    \vertex (b9) at (4,-2) [label=above:$$]{};

    \vertex (b10) at (5,0) [label=above:$x$]{};
    \vertex (b11) at (5,-1) [label=above:$$]{};
    \vertex (b12) at (5,-2) [label=above:$$]{};

    \vertex (b13) at (6,0) [label=above:$$]{};
    \vertex (b14) at (6,-1) [label=above:$$]{};

    \vertex (c1) at (2,1) [label=above:$$]{};
    \vertex (c2) at (1.5,1.5) [label=above:$$]{};
    \vertex (c3) at (2,2) [label=above:$$]{};
    \vertex (c4) at (2.5,1.5) [label=above:$v$]{};
    \vertex (c5) at (3,2) [label=above:$$]{};
    \vertex (c6) at (3.5,1.5) [label=above:$$]{};
    \vertex (c7) at (3,1) [label=above:$$]{};

    \path
    (a1) edge [-,thick] (a2)
    (a2) edge [-,thick] (a3)
    (a3) edge [-,thick] (a4)
    (a4) edge [-,thick] (a1)

    (b1) edge [-,thick] (a3)
    (b1) edge [-,thick] (a4)

    (b1) edge [-,thick] (b2)
    (b2) edge [-,thick] (b3)

    (b4) edge [-,thick] (b5)
    (b5) edge [-,thick] (b6)

    (b7) edge [-,thick] (b8)
    (b8) edge [-,thick] (b9)

    (b10) edge [-,thick] (b11)
    (b11) edge [-,thick] (b12)

    (b13) edge [-,thick] (b14)

    (b1) edge [-,thick] (b4)
    (b4) edge [-,thick] (b7)
    (b7) edge [-,thick] (b10)
    (b10) edge [-,thick] (b13)

    (b11) edge [-,thick] (b14)

    (b3) edge [-,thick] (b6)
    (b6) edge [-,thick] (b9)
    (b9) edge [-,thick] (b12)

    (c1) edge [-,thick] (b1)
    (c1) edge [-,thick] (b4)
    (c4) edge [-,thick] (b1)
    (c4) edge [-,thick] (b4)
    (c7) edge [-,thick] (b1)
    (c7) edge [-,thick] (b4)

    (c1) edge [-,thick] (c2)
    (c2) edge [-,thick] (c3)
    (c3) edge [-,thick] (c4)
    (c4) edge [-,thick] (c1)
    (c4) edge [-,thick] (c5)
    (c5) edge [-,thick] (c6)
    (c6) edge [-,thick] (c7)
    (c7) edge [-,thick] (c1)
    (c7) edge [-,thick] (c4)

	;

\end{tikzpicture}
\qquad
\begin{tikzpicture}[x=1cm, y=1cm]

    \draw (3,-2.75) node{\small $G_0=G_0^*$};

    \vertex (a2) at (0,-1) [label=above:$$]{};
    \vertex (a3) at (1,-1) [label=above:$$]{};
    \vertex (a4) at (1,0) [label=above:$$]{};

    \vertex (b1) at (2,0) [label=above:$$]{};
    \vertex (b2) at (2,-1) [label=above:$$]{};
    \vertex (b3) at (2,-2) [label=above:$$]{};

    \vertex (b5) at (3,-1) [label=above:$$]{};
    \vertex (b6) at (3,-2) [label=above :$$]{};

    \vertex (b7) at (4,0) [label=above:$$]{};
    \vertex (b8) at (4,-1) [label=above:$$]{};
    \vertex (b9) at (4,-2) [label=above:$$]{};

    \vertex (b11) at (5,-1) [label=above:$$]{};
    \vertex (b12) at (5,-2) [label=above:$$]{};

    \vertex (b13) at (6,0) [label=above:$$]{};
    \vertex (b14) at (6,-1) [label=above:$$]{};

    \vertex (c1) at (2,1) [label=above:$$]{};
    \vertex (c2) at (1.5,1.5) [label=above:$$]{};
    \vertex (c3) at (2,2) [label=above:$$]{};
    \vertex (c5) at (3,2) [label=above:$$]{};
    \vertex (c6) at (3.5,1.5) [label=above:$$]{};
    \vertex (c7) at (3,1) [label=above:$$]{};

    \path
    (a2) edge [-,thick] (a3)
    (a3) edge [-,thick] (a4)

    (b1) edge [-,thick] (a3)
    (b1) edge [-,thick] (a4)

    (b1) edge [-,thick] (b2)
    (b2) edge [-,thick] (b3)

    (b5) edge [-,thick] (b6)

    (b7) edge [-,thick] (b8)
    (b8) edge [-,thick] (b9)

    (b11) edge [-,thick] (b12)

    (b13) edge [-,thick] (b14)


    (b11) edge [-,thick] (b14)

    (b3) edge [-,thick] (b6)
    (b6) edge [-,thick] (b9)
    (b9) edge [-,thick] (b12)

    (c1) edge [-,thick] (b1)
    (c7) edge [-,thick] (b1)

    (c1) edge [-,thick] (c2)
    (c2) edge [-,thick] (c3)
    (c5) edge [-,thick] (c6)
    (c6) edge [-,thick] (c7)
    (c7) edge [-,thick] (c1)

	;

\end{tikzpicture}

\vskip0.5cm

\begin{tikzpicture}[x=1cm, y=1cm]

    \draw (3,-2.75) node{\small $G_1$};

    \vertex (a1) at (0,0) [label=above:$u$]{};
    \vertex (a2) at (0,-1) [label=above:$$]{};
    \vertex (a3) at (1,-1) [label=above:$$]{};
    \vertex (a4) at (1,0) [label=above:$$]{};

    \vertex (b1) at (2,0) [label=above:$$]{};
    \vertex (b2) at (2,-1) [label=above:$$]{};
    \vertex (b3) at (2,-2) [label=above:$$]{};

    \vertex (b4) at (3,0) [label=above right:$w$]{};
    \vertex (b5) at (3,-1) [label=above:$$]{};
    \vertex (b6) at (3,-2) [label=above :$$]{};

    \vertex (b7) at (4,0) [label=above:$$]{};
    \vertex (b8) at (4,-1) [label=above:$$]{};
    \vertex (b9) at (4,-2) [label=above:$$]{};

    \vertex (b11) at (5,-1) [label=above:$$]{};
    \vertex (b12) at (5,-2) [label=above:$$]{};

    \vertex (b13) at (6,0) [label=above:$$]{};
    \vertex (b14) at (6,-1) [label=above:$$]{};

    \vertex (c1) at (2,1) [label=above:$$]{};
    \vertex (c2) at (1.5,1.5) [label=above:$$]{};
    \vertex (c3) at (2,2) [label=above:$$]{};
    \vertex (c4) at (2.5,1.5) [label=above:$v$]{};
    \vertex (c5) at (3,2) [label=above:$$]{};
    \vertex (c6) at (3.5,1.5) [label=above:$$]{};
    \vertex (c7) at (3,1) [label=above:$$]{};

    \path
    (a1) edge [-,thick] (a2)
    (a2) edge [-,thick] (a3)
    (a3) edge [-,thick] (a4)
    (a4) edge [-,thick] (a1)

    (b1) edge [-,thick] (a3)
    (b1) edge [-,thick] (a4)

    (b1) edge [-,thick] (b2)
    (b2) edge [-,thick] (b3)

    (b4) edge [-,thick] (b5)
    (b5) edge [-,thick] (b6)

    (b7) edge [-,thick] (b8)
    (b8) edge [-,thick] (b9)

    (b11) edge [-,thick] (b12)

    (b13) edge [-,thick] (b14)

    (b1) edge [-,thick] (b4)
    (b4) edge [-,thick] (b7)

    (b11) edge [-,thick] (b14)

    (b3) edge [-,thick] (b6)
    (b6) edge [-,thick] (b9)
    (b9) edge [-,thick] (b12)

    (c1) edge [-,thick] (b1)
    (c1) edge [-,thick] (b4)
    (c4) edge [-,thick] (b1)
    (c4) edge [-,thick] (b4)
    (c7) edge [-,thick] (b1)
    (c7) edge [-,thick] (b4)

    (c1) edge [-,thick] (c2)
    (c2) edge [-,thick] (c3)
    (c3) edge [-,thick] (c4)
    (c4) edge [-,thick] (c1)
    (c4) edge [-,thick] (c5)
    (c5) edge [-,thick] (c6)
    (c6) edge [-,thick] (c7)
    (c7) edge [-,thick] (c1)
    (c7) edge [-,thick] (c4)

	;

\end{tikzpicture}
\qquad
\begin{tikzpicture}[x=1cm, y=1cm]

    \draw (3,-2.75) node{\small $G_1^*$};

    \vertex (a1) at (0,0) [label=above:$u$]{};
    \vertex (a2) at (0,-1) [label=above:$$]{};
    \vertex (a3) at (1,-1) [label=above:$$]{};
    \vertex (a4) at (1,0) [label=above:$$]{};

    \vertex (b1) at (2,0) [label=above:$$]{};
    \vertex (b2) at (2,-1) [label=above:$$]{};
    \vertex (b3) at (2,-2) [label=above:$$]{};

    \vertex (b4) at (3,0) [label=above right:$w$]{};
    \vertex (b5) at (3,-1) [label=above:$$]{};
    \vertex (b6) at (3,-2) [label=above :$$]{};

    \vertex (b7) at (4,0) [label=above:$$]{};
    \vertex (b8) at (4,-1) [label=above:$$]{};
    \vertex (b9) at (4,-2) [label=above:$$]{};

    \vertex (b11) at (5,-1) [label=above:$$]{};
    \vertex (b12) at (5,-2) [label=above:$$]{};

    \vertex (b13) at (6,0) [label=above:$$]{};
    \vertex (b14) at (6,-1) [label=above:$$]{};

    \vertex (c1) at (2,1) [label=above:$$]{};
    \vertex (c2) at (1.5,1.5) [label=above:$$]{};
    \vertex (c3) at (2,2) [label=above:$$]{};
    \vertex (c4) at (2.5,1.5) [label=above:$v$]{};
    \vertex (c5) at (3,2) [label=above:$$]{};
    \vertex (c6) at (3.5,1.5) [label=above:$$]{};
    \vertex (c7) at (3,1) [label=above:$$]{};

    \path
    (a1) edge [-,thick] (a2)
    (a2) edge [-,thick] (a3)
    (a3) edge [-,thick] (a4)
    (a4) edge [-,thick] (a1)

    (b1) edge [-,thick] (a3)
    (b1) edge [-,thick] (a4)

    (b1) edge [-,thick] (b2)
    (b2) edge [-,thick] (b3)

    (b4) edge [-,thick] (b5)
    (b5) edge [-,thick] (b6)

    (b7) edge [-,thick] (b8)
    (b8) edge [-,thick] (b9)

    (b11) edge [-,thick] (b12)

    (b13) edge [-,thick] (b14)

    (b1) edge [-,thick] (b4)
    (b4) edge [-,thick] (b7)

    (b11) edge [-,thick] (b14)

    (b3) edge [-,thick] (b6)
    (b6) edge [-,thick] (b9)
    (b9) edge [-,thick] (b12)

    (c1) edge [-,thick] (b1)
    (c1) edge [-,thick] (b4)
    (c4) edge [-,thick] (b1)
    (c4) edge [-,thick] (b4)
    (c7) edge [-,thick] (b1)
    (c7) edge [-,thick] (b4)

    (c1) edge [-,thick] (c2)
    (c2) edge [-,thick] (c3)
    (c3) edge [-,thick] (c4)
    (c4) edge [-,thick] (c1)
    (c4) edge [-,thick] (c5)
    (c5) edge [-,thick] (c6)
    (c6) edge [-,thick] (c7)
    (c7) edge [-,thick] (c1)
    (c7) edge [-,thick] (c4)

    (a1) edge [-,thick] (a3)
    (c4) edge [-,thick] (c2)
    (c4) edge [-,thick] (c6)

    (b4) edge [-,thick] (b2)
    (b4) edge [-,thick] (b3)
    (b4) edge [-,thick] (b5)
    (b4) edge [-,bend right=15,thick] (b6)
    (b4) edge [-,thick] (b8)
    (b4) edge [-,thick] (b9)
	;

\end{tikzpicture}

\vskip0.5cm

\begin{tikzpicture}[x=1cm, y=1cm]

    \draw (3,-2.75) node{\small $G_2$};

    \vertex (a1) at (0,0) [label=above:$u$]{};
    \vertex (a2) at (0,-1) [label=above:$$]{};
    \vertex (a3) at (1,-1) [label=above:$$]{};
    \vertex (a4) at (1,0) [label=above:$$]{};

    \vertex (b1) at (2,0) [label=above:$$]{};
    \vertex (b2) at (2,-1) [label=above:$$]{};
    \vertex (b3) at (2,-2) [label=above:$$]{};

    \vertex (b4) at (3,0) [label=above right:$w$]{};
    \vertex (b5) at (3,-1) [label=above:$$]{};
    \vertex (b6) at (3,-2) [label=above :$$]{};

    \vertex (b7) at (4,0) [label=above:$$]{};
    \vertex (b8) at (4,-1) [label=above:$$]{};
    \vertex (b9) at (4,-2) [label=above:$$]{};

    \vertex (b10) at (5,0) [label=above:$x$]{};
    \vertex (b11) at (5,-1) [label=above:$$]{};
    \vertex (b12) at (5,-2) [label=above:$$]{};

    \vertex (b13) at (6,0) [label=above:$$]{};
    \vertex (b14) at (6,-1) [label=above:$$]{};

    \vertex (c1) at (2,1) [label=above:$$]{};
    \vertex (c2) at (1.5,1.5) [label=above:$$]{};
    \vertex (c3) at (2,2) [label=above:$$]{};
    \vertex (c4) at (2.5,1.5) [label=above:$v$]{};
    \vertex (c5) at (3,2) [label=above:$$]{};
    \vertex (c6) at (3.5,1.5) [label=above:$$]{};
    \vertex (c7) at (3,1) [label=above:$$]{};

    \path
    (a1) edge [-,thick] (a2)
    (a2) edge [-,thick] (a3)
    (a3) edge [-,thick] (a4)
    (a4) edge [-,thick] (a1)

    (b1) edge [-,thick] (a3)
    (b1) edge [-,thick] (a4)

    (b1) edge [-,thick] (b2)
    (b2) edge [-,thick] (b3)

    (b4) edge [-,thick] (b5)
    (b5) edge [-,thick] (b6)

    (b7) edge [-,thick] (b8)
    (b8) edge [-,thick] (b9)

    (b10) edge [-,thick] (b11)
    (b11) edge [-,thick] (b12)

    (b13) edge [-,thick] (b14)

    (b1) edge [-,thick] (b4)
    (b4) edge [-,thick] (b7)
    (b7) edge [-,thick] (b10)
    (b10) edge [-,thick] (b13)

    (b11) edge [-,thick] (b14)

    (b3) edge [-,thick] (b6)
    (b6) edge [-,thick] (b9)
    (b9) edge [-,thick] (b12)

    (c1) edge [-,thick] (b1)
    (c1) edge [-,thick] (b4)
    (c4) edge [-,thick] (b1)
    (c4) edge [-,thick] (b4)
    (c7) edge [-,thick] (b1)
    (c7) edge [-,thick] (b4)

    (c1) edge [-,thick] (c2)
    (c2) edge [-,thick] (c3)
    (c3) edge [-,thick] (c4)
    (c4) edge [-,thick] (c1)
    (c4) edge [-,thick] (c5)
    (c5) edge [-,thick] (c6)
    (c6) edge [-,thick] (c7)
    (c7) edge [-,thick] (c1)
    (c7) edge [-,thick] (c4)

    (a1) edge [-,thick] (a3)
    (c4) edge [-,thick] (c2)
    (c4) edge [-,thick] (c6)

    (b4) edge [-,thick] (b2)
    (b4) edge [-,thick] (b3)
    (b4) edge [-,thick] (b5)
    (b4) edge [-,bend right=15,thick] (b6)
    (b4) edge [-,thick] (b8)
    (b4) edge [-,thick] (b9)
	;

\end{tikzpicture}
\qquad
\begin{tikzpicture}[x=1cm, y=1cm]

    \draw (3,-2.75) node{\small $G_2^*$};

    \vertex (a1) at (0,0) [label=above:$u$]{};
    \vertex (a2) at (0,-1) [label=above:$$]{};
    \vertex (a3) at (1,-1) [label=above:$$]{};
    \vertex (a4) at (1,0) [label=above:$$]{};

    \vertex (b1) at (2,0) [label=above:$$]{};
    \vertex (b2) at (2,-1) [label=above:$$]{};
    \vertex (b3) at (2,-2) [label=above:$$]{};

    \vertex (b4) at (3,0) [label=above right:$w$]{};
    \vertex (b5) at (3,-1) [label=above:$$]{};
    \vertex (b6) at (3,-2) [label=above :$$]{};

    \vertex (b7) at (4,0) [label=above:$$]{};
    \vertex (b8) at (4,-1) [label=above:$$]{};
    \vertex (b9) at (4,-2) [label=above:$$]{};

    \vertex (b10) at (5,0) [label=above:$x$]{};
    \vertex (b11) at (5,-1) [label=above:$$]{};
    \vertex (b12) at (5,-2) [label=above:$$]{};

    \vertex (b13) at (6,0) [label=above:$$]{};
    \vertex (b14) at (6,-1) [label=above:$$]{};

    \vertex (c1) at (2,1) [label=above:$$]{};
    \vertex (c2) at (1.5,1.5) [label=above:$$]{};
    \vertex (c3) at (2,2) [label=above:$$]{};
    \vertex (c4) at (2.5,1.5) [label=above:$v$]{};
    \vertex (c5) at (3,2) [label=above:$$]{};
    \vertex (c6) at (3.5,1.5) [label=above:$$]{};
    \vertex (c7) at (3,1) [label=above:$$]{};

    \path
    (a1) edge [-,thick] (a2)
    (a2) edge [-,thick] (a3)
    (a3) edge [-,thick] (a4)
    (a4) edge [-,thick] (a1)

    (b1) edge [-,thick] (a3)
    (b1) edge [-,thick] (a4)

    (b1) edge [-,thick] (b2)
    (b2) edge [-,thick] (b3)

    (b4) edge [-,thick] (b5)
    (b5) edge [-,thick] (b6)

    (b7) edge [-,thick] (b8)
    (b8) edge [-,thick] (b9)

    (b10) edge [-,thick] (b11)
    (b11) edge [-,thick] (b12)

    (b13) edge [-,thick] (b14)

    (b1) edge [-,thick] (b4)
    (b4) edge [-,thick] (b7)
    (b7) edge [-,thick] (b10)
    (b10) edge [-,thick] (b13)

    (b11) edge [-,thick] (b14)

    (b3) edge [-,thick] (b6)
    (b6) edge [-,thick] (b9)
    (b9) edge [-,thick] (b12)

    (c1) edge [-,thick] (b1)
    (c1) edge [-,thick] (b4)
    (c4) edge [-,thick] (b1)
    (c4) edge [-,thick] (b4)
    (c7) edge [-,thick] (b1)
    (c7) edge [-,thick] (b4)

    (c1) edge [-,thick] (c2)
    (c2) edge [-,thick] (c3)
    (c3) edge [-,thick] (c4)
    (c4) edge [-,thick] (c1)
    (c4) edge [-,thick] (c5)
    (c5) edge [-,thick] (c6)
    (c6) edge [-,thick] (c7)
    (c7) edge [-,thick] (c1)
    (c7) edge [-,thick] (c4)

    (a1) edge [-,thick] (a3)
    (c4) edge [-,thick] (c2)
    (c4) edge [-,thick] (c6)

    (b4) edge [-,thick] (b2)
    (b4) edge [-,thick] (b3)
    (b4) edge [-,thick] (b5)
    (b4) edge [-,bend right=15,thick] (b6)
    (b4) edge [-,thick] (b8)
    (b4) edge [-,thick] (b9)

    (b10) edge [-,thick] (b8)
    (b10) edge [-,thick] (b9)
    (b10) edge [-,bend right=15,thick] (b12)
    (b10) edge [-,thick] (b14)
    (b10) edge [-,bend right=20,thick] (b4)
	;

\end{tikzpicture}

\end{center}
\caption{A chordalization chain
$G_0 = G_0^* <_{\{u,v,w\}} G_1 \le G_1^* <_{\{x\}}  G_2 \le G_2^*$ for a local chordalization partition $\tilde{\CCC} = (\{u,v,w\}, \{x\})$ of $G$}
\label{fig:index}
\end{figure}
\end{Ex}

In this section, we prove the following statement.
\begin{Thm}\label{thm:coloring2}
For any graph $G$,
$\chi_{DP}(G) \le \omega(G) + i(G)$.
Especially, if $G$ is non-chordal and $K_n$-minor-free, then
$\chi_{DP}(G) \le n-2 + i(G)$.
\end{Thm}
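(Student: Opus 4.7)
The plan is to take $G^* := G_k^*$ from Theorem~\ref{thm:existence} with $k = i(G)$; since $G \subseteq G^*$ and $G^*$ is chordal, $(\S)$ gives $\chi_{DP}(G) \le \chi_{DP}(G^*) = \omega(G^*)$, so both inequalities reduce to bounds on $\omega(G_k^*)$. I will obtain these by induction on $i$ along the chordalization chain $G_0, G_1, \ldots, G_k$, exploiting Theorem~\ref{thm:newclique} and Theorem~\ref{thm:knminor} together with Corollary~\ref{cor:newclique} and Corollary~\ref{cor:knminor}.

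For the first inequality I claim, by induction on $i \ge 1$, that $\omega(G_i) \le \omega(G) + (i-1)$. The base $i = 1$ is immediate, since $G_1 = G - \bigcup_{j \ge 2} \CCC_j$ is a subgraph of $G$. For the inductive step, take a clique $K$ of $G_i$ and split it as $K = K_0 \cup K_1$ with $K_0 = K \cap V(G_{i-1}^*)$ and $K_1 = K \cap \CCC_i$. The construction of $G_i$ forces every edge of $G_i$ on $V(G_{i-1}^*)$ into $G_{i-1}^*$ and every edge of $G_i$ incident to $\CCC_i$ into $G$; so $K_0$ is a clique of $G_{i-1}^*$, $K_1$ is a clique of $G$, and all $K_0$-to-$K_1$ edges lie in $G$. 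Theorem~\ref{thm:newclique} applied to $G_{i-1}$ with $\CCC_{i-1}$ supplies $u_{i-1} \in K_0 \cap \CCC_{i-1}$ (possibly empty) with $K_0 \setminus \{u_{i-1}\}$ a clique of $G_{i-1}$, and the induction hypothesis yields further $u_j \in \CCC_j$ (possibly empty) for $j < i-1$ so that $K \setminus \{u_1, \ldots, u_{i-1}\}$ is an honest clique of $G$. This gives $|K| \le \omega(G) + (i-1)$, and Corollary~\ref{cor:newclique} at the top step delivers $\omega(G_k^*) \le \omega(G_k) + 1 \le \omega(G) + i(G)$.

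For the $K_n$-minor-free half I need to save one in this count, which forces me to argue at the minor level rather than the clique level. I strengthen the inductive claim: for $i \ge 2$ and every clique $K$ of $G_i$, there exist $u_j \in K \cap \CCC_j$ (possibly empty) for $2 \le j \le i-1$ such that $K \setminus \{u_2, \ldots, u_{i-1}\}$ is a minor of $G$ with all branch sets contained in $V(G_i)$. The base case $i = 2$ is Theorem~\ref{thm:knminor} applied to $G_1$ and $\CCC_1$, which realizes $K_0$ as a minor of $G_1$ with branch sets in $V(G_1)$; since $V(G_1) \cap \CCC_2 = \emptyset$, one may adjoin singletons $\{u\}$ for $u \in K_1 \subseteq \CCC_2$ disjointly, and the between-set edges lie in $G$ by the Part~1 edge analysis. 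The inductive step is Part~1 lifted to the minor level: peel off $u_{i-1}$ by Theorem~\ref{thm:newclique}, invoke the inductive hypothesis on the clique $K_0 \setminus \{u_{i-1}\}$ of $G_{i-1}$ to obtain branch sets in $V(G_{i-1})$, and attach singletons for $K_1 \subseteq \CCC_i$ using $V(G_{i-1}) \cap \CCC_i = \emptyset$. $K_n$-minor-freeness of $G$ forces $|K \setminus \{u_2, \ldots, u_{i-1}\}| \le n-1$, so $\omega(G_i) \le n + i - 3$ and Corollary~\ref{cor:newclique} gives $\omega(G_i^*) \le n - 2 + i$ for $i \ge 2$; the case $i = 1$ is handled by Corollary~\ref{cor:knminor} directly, yielding $\omega(G_1^*) \le n - 1$. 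Setting $i = k = i(G)$ completes the bound.

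The main obstacle is the branch-set-containment invariant in the strengthened statement. It would be overly optimistic to claim that every clique of $G_i$ is itself a minor of $G$, since iterating that would force the bound to be uniformly $n-1$ and essentially yield a form of Hadwiger's conjecture through this machinery; the invariant that the branch sets lie inside $V(G_i)$ is precisely tuned so that the fresh singletons from $\CCC_i$ graft on disjointly at each step. The companion technical point is that every edge used between branch sets has to be a genuine $G$-edge rather than a newly added chordalization edge, which is exactly what the construction of $G_i$ enforces for edges touching $\CCC_i$.
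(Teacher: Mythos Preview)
Your argument is correct in substance and follows the same route as the paper: the paper deduces Theorem~\ref{thm:coloring2} from Theorem~\ref{thm:lco}, whose proof is precisely your peeling induction (packaged there as Theorem~\ref{thm:newclique2}) for the first inequality, together with a single use of Theorem~\ref{thm:knminor} at level~$1$ plus peeling for the second.

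Two points to tighten. In Part~1, the hypothesis you state, $\omega(G_i)\le\omega(G)+i-1$, is weaker than what you actually invoke; you should induct directly on the stronger statement that every clique of $G_i$ becomes a clique of $G$ after deleting at most one vertex from each $\CCC_j$ with $j<i$ (this is exactly the content of the paper's Theorem~\ref{thm:newclique2}). In Part~2, the invariant ``branch sets contained in $V(G_i)$'' is not by itself enough to attach the $\CCC_i$-singletons: the $G$-edge you want from $B_v$ to $\{u\}$ is $vu$, which only works if $v\in B_v$. Add $v\in B_v$ to the inductive invariant; it holds at the base by inspection of the proof of Theorem~\ref{thm:knminor} (every branch set there is a singleton except the one for the distinguished $\CCC_1$-vertex, and that one still contains its vertex), and is trivially preserved when you adjoin singletons at each later step.
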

\noindent
In order to do that, we show the following theorem first.

\begin{Thm}\label{thm:newclique2}
Let $G$ be a graph,
$\tilde{\CCC} = (\CCC_1, \ldots, \CCC_\ell )$ be a local chordalization partition of a hole cover $\CCC$ of $G$,
and $G^*$ be the last graph in the chordalization chain corresponding to $\tilde{\CCC}$.
If a vertex set $K$ of $G$ forms a clique in $G^*$, then there exists a subset $\CCC^*$ of $K \cap \CCC$ such that $K \setminus \CCC^*$ is a clique in $G$ and $|\CCC^* \cap \CCC_i| \le 1$ for each $i=1,\ldots,\ell$.
\end{Thm}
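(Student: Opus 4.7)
The plan is to induct on the length $\ell$ of the local chordalization partition, peeling off at most one vertex of $\CCC_\ell$ at each step via Theorem~\ref{thm:newclique} and then recursing on the truncated chain ending at $G_{\ell-1}^*$.

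For the base case $\ell=1$, we have $\CCC=\CCC_1$, $G_0=G_0^*=G-\CCC$ is chordal, and unwinding the definitions gives $G_1=G$ and $G^*=\widehat{G}(\CCC_1)$. Since $\CCC_1$ satisfies the NC property in $G_1=G$, Theorem~\ref{thm:newclique} directly produces $\CCC^*\subseteq K\cap\CCC_1$ with $|\CCC^*|\le 1$ and $K\setminus\CCC^*$ a clique in $G$.

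For the inductive step, $\CCC_\ell$ is a hole cover of $G_\ell$ with the NC property and $G^*=\widehat{G_\ell}(\CCC_\ell)$, so Theorem~\ref{thm:newclique} applied to $K$ as a clique of $G^*$ yields $A\subseteq K\cap\CCC_\ell$ with $|A|\le 1$ such that $K_\ell:=K\setminus A$ is a clique in $G_\ell$. Unwinding $V(G_i)=V(G_{i-1}^*)\cup\CCC_i$ and $E(G_i)=E(G_{i-1}^*)\cup E(G-\bigcup_{j>i}\CCC_j)$ yields two observations. First, $V(G_{\ell-1}^*)=V(G)\setminus\CCC_\ell$ and $E(G-\CCC_\ell)\subseteq E(G_{\ell-1}^*)$, so every edge of $K_\ell$ between two vertices of $K_\ell\setminus\CCC_\ell$ lies in $E(G_{\ell-1}^*)$, making $K_\ell\setminus\CCC_\ell$ a clique in $G_{\ell-1}^*$. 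Second, since vertices of $\CCC_\ell$ are absent from $G_{\ell-1}^*$, every edge of $G_\ell$ incident to $\CCC_\ell$ comes from $E(G)$; in particular, every edge of $K_\ell$ incident to $K_\ell\cap\CCC_\ell$ is already an edge of $G$.

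It is then routine to verify that $(\CCC_1,\ldots,\CCC_{\ell-1})$ is a local chordalization partition of the hole cover $\CCC\setminus\CCC_\ell$ of $G-\CCC_\ell$, whose chordalization chain coincides with the initial segment of the original chain ending at $G_{\ell-1}^*$. Applying the induction hypothesis to $K_\ell\setminus\CCC_\ell$ as a clique in $G_{\ell-1}^*$ produces $\CCC^{**}\subseteq(K_\ell\setminus\CCC_\ell)\cap(\CCC_1\cup\cdots\cup\CCC_{\ell-1})$ with $|\CCC^{**}\cap\CCC_i|\le 1$ for $i\le\ell-1$ and $(K_\ell\setminus\CCC_\ell)\setminus\CCC^{**}$ a clique in $G-\CCC_\ell$, hence in $G$. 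Setting $\CCC^*:=A\cup\CCC^{**}$, the second observation fills in the remaining edges of $K_\ell\setminus\CCC^{**}$ that are incident to $K_\ell\cap\CCC_\ell$, so $K\setminus\CCC^*=K_\ell\setminus\CCC^{**}$ is a clique in $G$ with $|\CCC^*\cap\CCC_i|\le 1$ for every $i$. The delicate part of the argument is precisely the edge-tracing in the two observations above; once those are in hand the induction is a clean peel-off.
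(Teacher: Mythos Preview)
Your proof is correct and follows essentially the same strategy as the paper's: both iterate Theorem~\ref{thm:newclique} down the chain, peeling off at most one vertex of $\CCC_i$ at each level. The only difference is bookkeeping---the paper introduces spanning supergraphs $H_i$ with $E(H_i)=E(G)\cup E(G_i^*)$ and descends through them directly, whereas you delete $\CCC_\ell$ and recurse on $G-\CCC_\ell$; your two ``observations'' about where edges of $G_\ell$ live play exactly the role of the paper's passage between $H_{i+1}$ and $H_i$.
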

\begin{proof}
Let \[
G_0 = G_0^* <_{\CCC_1} G_1 \le G_1^* <_{\CCC_2}  G_2 \le G_2^* < \cdots <_{\CCC_\ell} G_\ell \le G_\ell^* = G^*
\]
be the chordalization chain corresponding to $\tilde{\CCC}$
for graphs $G_i$ and chordal graphs $G_i^*$.
Then $\CCC_i$ is a hole cover of the graph $G_{i}$ satisfying the NC property for each $i=1$, $\ldots$, $\ell$.
For each $i=0,1,\ldots, \ell$, we add the vertices in $\bigcup_{j=i+1}^\ell \CCC_j$ to $G_i^*$ and then restore the edges in $G$ to obtain $H_i$, that is, $H_i$ is the spanning supergraph of $G$ with the edge set $E(G) \cup E(G_i^*)$.
Then, by the definitions of $G_i$ and $G^*_i$, $H_\ell=G_\ell^*$ and, for each $i=0,\ldots,\ell-1$,

\begin{equation*}
H_i - \bigcup_{j=i+1}^\ell \CCC_j = G^*_i, \quad H_i - \bigcup_{j=i+2}^\ell \CCC_j = G_{i+1},
\end{equation*}
and, since $G^*_{i+1}=\widehat{G_{i+1}}(\CCC_{i+1})$,
\begin{equation}\label{eq:NC}
H_{i+1} - \bigcup_{j=i+2}^\ell \CCC_j =  \widehat{\left(H_i - \bigcup_{j=i+2}^\ell \CCC_j\right)}(\CCC_{i+1}).
\end{equation}
We claim that if $L$ is a clique in $H_{i+1}$ but is not a clique in $H_{i}$, then $L \setminus \{u\}$ is a clique in $H_i$ for some vertex $u \in L \cap \CCC_{i+1}$
for each $i=0, 1,  \ldots, \ell-1$.
Suppose $L$ is a clique in $H_{i+1}$ but not a clique in $H_i$ for some $i \in \{0,1,\ldots, \ell-1\}$.
Then $L^* := L \setminus \bigcup_{j=i+2}^\ell \CCC_j$ is a clique in $H_{i+1} - \bigcup_{j=i+2}^\ell \CCC_j$. Since two vertices joined by an edge in $H_{i+1}$ but not in $H_i$ belong to $V(G^*_{i+1})=V(G) \setminus \bigcup_{j=i+2}^{\ell}\CCC_j$, $L^*$ is not a clique in $H_i -\bigcup_{j=i+2}^\ell \CCC_j$.
We note that \eqref{eq:NC} holds and  $\CCC_{i+1}$ is a hole cover of $G_{i+1}= H_i - \bigcup_{j=i+2}^\ell \CCC_j$ satisfying the NC property. Thus, by Theorem~\ref{thm:newclique},  there exists a vertex $u$ in $L^* \cap \CCC_{i+1}$ such that $L^* \setminus \{u\}$ is a clique in $H_i - \bigcup_{j=i+2}^\ell \CCC_j$.
For the same reason why $L^*$ is not a clique in $H_i -\bigcup_{j=i+2}^\ell \CCC_j$, $L \setminus \{u\}$ is still a clique in $H_i$.

Now we take a clique $L_0 := K$ in $H_\ell$.
For $i=0, \ldots, \ell-1$, we sequentially obtain a clique $L_{i+1}$ in $H_{\ell-i-1}$ in the following way.
If $L_i$ is a clique in $H_{\ell-i-1}$, then we let $L_{i+1} = L_i$.
If $L_i$ is not a clique in $H_{\ell-i-1}$, then, by the claim which has been proven above, there exists a vertex $u \in L_i \cap \CCC_{\ell-i}$ such that $L_i \setminus \{u\}$ is a clique in $H_{\ell-i-1}$  and we let $L_{i+1}  = L_i \setminus \{u\}$.
Let $\CCC^* = K \setminus L_\ell$. Then $K \setminus \CCC^*$ equals $L_\ell$ and so is a clique as $L_\ell$ is a clique in $H_0 = G$.
Moreover, since at most one vertex in $\CCC_{\ell-i}$ was deleted to obtain $L_{i+1}$ from $L_i$, we have $\CCC^* \subset \CCC$ and $|\CCC^* \cap \CCC_i| \le 1$ for each $i=1,\ldots,\ell$, which completes the proof.
\end{proof}

\begin{Thm}\label{thm:lco}
Let $G$ be a graph,
$\tilde{\CCC} = (\CCC_1, \ldots, \CCC_{i(G)} )$ be a local chordalization partition of a hole cover $\CCC$ of $G$,
and $G^*$ be the last graph in the chordalization chain corresponding to $\tilde{\CCC}$.
Then, for an induced subgraph $H$ of $G$,  $\omega(H^*) \le \omega(H) + i(G)$ where $H^*$ is the subgraph of $G^*$ induced by $V(H)$.
Especially, if $G$ is non-chordal and $K_n$-minor-free, then $\omega(G^{*}) \le n-2 + i(G)$.
\end{Thm}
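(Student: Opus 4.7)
My plan is to derive both inequalities from Theorem~\ref{thm:newclique2}. Let $K$ be a maximum clique of $H^*$. Since $H^*$ is the subgraph of $G^*$ induced on $V(H)$, $K$ is also a clique of $G^*$, so Theorem~\ref{thm:newclique2} yields $\CCC^* \subseteq K \cap \CCC$ with $|\CCC^* \cap \CCC_i| \le 1$ for each $i \in \{1,\dots,i(G)\}$ and $K \setminus \CCC^*$ a clique of $G$. Because $H$ is induced in $G$ and $K \subseteq V(H)$, the set $K \setminus \CCC^*$ is in fact a clique of $H$; hence $|K \setminus \CCC^*| \le \omega(H)$ and $|\CCC^*| \le i(G)$ combine to give the first inequality.

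For the $K_n$-minor-free case, specializing $H = G$ and using $\omega(G) \le n-1$ would only yield $\omega(G^*) \le n-1+i(G)$, so I would save one more via case analysis on the decomposition $K = \CCC^* \sqcup K'$, where $K' := K \setminus \CCC^*$. If $|\CCC^*| \le i(G)-1$, then $|K| \le (i(G)-1)+(n-1) = n-2+i(G)$. If $\CCC^* = \emptyset$, then $K = K'$ is a clique of $G$, so $|K| \le n-1 \le n-2+i(G)$ since $G$ is non-chordal and thus $i(G) \ge 1$.

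The delicate case is $|\CCC^*| = i(G) \ge 1$, where I must show $|K'| \le n-2$. I would argue by contradiction: if $|K'| = n-1$, I would construct a $K_n$-minor of $G$, contradicting the hypothesis. Fix $u \in \CCC^*$; if $u$ were adjacent in $G$ to every vertex of $K'$, then $K' \cup \{u\}$ would be a $K_n$ in $G$, so there exists $v \in K'$ with $uv$ a newly added edge of $G^*$, witnessed by a hole in $G_{j-1}^*$ at the step $j$ when $u \in \CCC_j$ is processed. From these holes I would build a connected branch set $S_u$ in $G$ containing $u$, disjoint from $K'$, and adjacent in $G$ to every vertex of $K'$, by choosing $(u,v)$-sections of witnessing holes avoiding $K'$ (in the spirit of Theorem~\ref{thm:knminor}) and, recursively, unfolding any non-$G$ edges on those sections by invoking holes at earlier chordalization steps. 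Together with the singleton branch sets $\{v\}_{v\in K'}$, this would yield a $K_n$-minor of $G$. The main obstacle is preserving disjointness of $S_u$ from $K'$ through the recursive unfolding of newly added edges, which I expect to handle via careful section selection echoing the path-and-section argument in Theorem~\ref{thm:knminor}'s proof.
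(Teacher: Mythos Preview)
Your argument for the first inequality is correct and coincides with the paper's.

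For the $K_n$-minor-free part, your approach diverges from the paper's, and the ``recursive unfolding'' step is a genuine gap, not a detail. If $u\in\CCC_j$ with $j>1$, the hole in $G_j$ witnessing a newly added edge $uv$ may itself use edges added at earlier steps; unfolding such an edge $xy$ replaces it by a section of a hole in some $G_{j'}$ with $j'<j$, and nothing in the Theorem~\ref{thm:knminor} argument prevents that section from meeting $K'$: the avoidance there relies on $K\setminus\{u\}$ being a clique in the \emph{same} graph where the hole lives, and on the hole meeting that clique in at most two vertices, both of which break down once you recurse across levels. Even choosing $u\in\CCC^*\cap\CCC_1$ does not save you, because a vertex $v\in K'$ may lie in $\CCC_j\setminus\CCC^*$ for some $j\ge 2$; then $v\notin V(G_1)$ and the edge $uv$ is created only at step $j$ by chordalizing through $v$, so the witnessing hole is in $G_j$, not in $G$, and you are back in the recursive situation.

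The paper avoids this entirely. It introduces the graph $Z$ with $V(Z)=V(G)$ and $E(Z)=E(G)\cup E(G_1^*)$, observes that $(\CCC_2,\ldots,\CCC_\ell)$ is a local chordalization partition of a hole cover of $Z$ with terminal graph $G^*$, and applies Theorem~\ref{thm:newclique2} to this shorter chain to get $\omega(G^*)\le\omega(Z)+(\ell-1)$. It then shows $\omega(Z)\le n-1$: for any clique $L$ of $Z$, the restriction $L^*:=L\cap V(G_1)$ is a clique of $G_1^*=\widehat{G_1}(\CCC_1)$, and since $\CCC_1$ satisfies the NC property in $G_1$, Theorem~\ref{thm:knminor} realises $L^*$ as a minor of $G_1\subset G$; every remaining edge of $L$ has an endpoint in $\bigcup_{j\ge 2}\CCC_j$ and hence already lies in $E(G)$, so $L$ is a minor of $G$. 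Thus the minor argument is invoked only once, at the single step where the NC property is available, and no recursion is needed.
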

\begin{proof}
If $G$ is chordal, then the first part of the statement is immediately true as we may take $G$ as $G^*$ and the second statement is vacuously true.
Thus we may assume $G$ is non-chordal.
Then $\ell := i(G) \ge 1$.
Let
\[
G_0 = G_0^* <_{\CCC_1} G_1 \le G_1^* <_{\CCC_2}  G_2 \le G_2^* < \cdots <_{\CCC_\ell} G_\ell \le G_\ell^* = G^*.
\]
be the chordalization chain corresponding to $\tilde{\CCC}$.
Clearly $H^*$ is a chordal completion of $H$.
Let $K$ be a maximum clique of $H^*$.
Then $K$ is a clique in $G^*$.
By Theorem~\ref{thm:newclique2}, there exists a subset $\CCC^*$ of $K \cap \CCC$ such that $K \setminus \CCC^*$ is a clique in $G$ and $|\CCC^* \cap \CCC_i| \le 1$ for each $i=1,\ldots,\ell$.
Then
\[|\CCC^{*}|=\left|\CCC^{*} \cap \bigcup_{j=1}^\ell\CCC_j \right| \le \sum_{j=1}^\ell |\CCC^{*} \cap \CCC_j| \le \ell.\]
Now we note that $K \setminus \CCC^*$ is a clique in $G$, $K \subset V(H)$, and $H$ is an induced subgraph of $G$. Thus $K\setminus \CCC^*$ is a clique in $H$ and so $|K \setminus \CCC^*| \le \omega(H)$.
Therefore $\omega(H^*) = |K| \le |K \setminus \CCC^*| + |\CCC^*| \le \omega(H) + \ell$ and so the first statement is true.


To show the ``especially'' part, assume that $G$ is $K_n$-minor-free.
Let $Z$ be the graph with the vertex set $V(G)$ and the edge set $E(G) \cup E(G_1^*)$.
Then $\bigcup_{j=2}^\ell \CCC_j$ is a hole cover of $Z$ and $(\CCC_2, \ldots, \CCC_\ell)$ is a local chordalization partition of $\bigcup_{j=2}^\ell \CCC_j$.
By the definition of non-chordality index, $i(Z) \le \ell-1$.
Let
\[
Z_0 = Z_0^* <_{\CCC_2} Z_1 \le Z_1^* <_{\CCC_3}  Z_2 \le Z_2^* < \cdots <_{\CCC_{\ell}} Z_{\ell-1} \le Z_{\ell-1}^*
\]
be the chordalization chain corresponding to $(\CCC_2, \ldots, \CCC_\ell)$.
By the way, $Z_0 = Z_0^* = G_1^*$, $Z_i = G_{i+1}$ and $Z_i^* = G_{i+1}^*$ for $i=1,\ldots,\ell-1$.
To reach a contradiction, suppose that $Z$ has a clique $L$ of size $n$.
Then $L^* := L \setminus \bigcup_{j=2}^\ell \CCC_j$ is a clique in $G_1^*$. By the definition of $Z$, the edges in $L$ but not in $L^*$ belong to $G$.
Since $\CCC_1$ is a hole cover of $G_1$ satisfying the NC property, by Theorem~\ref{thm:knminor}, $L^*$ is a minor of $G_1$ as $G_1^* = \widehat{G_1}(\CCC_1)$.
As $G_1$ is a subgraph of $G$ and the edges in $L$ but not in $L^*$ belong to $G$, we may conclude that $L$ is a minor of $G$ with size $n$, which is a contradiction.
Therefore $Z$ is $K_n$-free and so $\omega(Z) \le n-1$.
Take a maximum clique $K$ of $G^*$.
If $K$ is a clique of $Z$, then $\omega(G^*) = |K| \le \omega(Z) \le n-1 \le n-2+i(G)$ and so the inequality holds.
Suppose that $K$ is not a clique of $Z$.
By Theorem~\ref{thm:newclique2}, there exists a subset $\CCC^{**}$ of $K \cap \left(\bigcup_{j=2}^\ell \CCC_j\right)$ such that $K \setminus \CCC^{**}$ is a clique in $Z$ and $|\CCC^{**} \cap \CCC_i| \le 1$ for each $i=2,\ldots,\ell$.
Then
\[|\CCC^{**}|=\left|\CCC^{**} \cap \bigcup_{j=2}^\ell\CCC_j \right| \le \sum_{j=2}^\ell |\CCC^{**} \cap \CCC_j| \le \ell-1.\]
Thus
\[
n-1 \ge \omega(Z) \ge |K \setminus \CCC^{**}| \ge |K| - |\CCC^{**}| \ge \omega(G^*) - (\ell-1)
\]
and the ``especially'' part is true.
\end{proof}

\emph{A proof of Theorem~\ref{thm:coloring2}.}
Take a graph $G$ and let $G^*$ be a chordal completion of $G$ given in Theorem~\ref{thm:lco}.
Then, since $G^*$ is chordal, $\chi_{DP}(G^*) = \omega(G^*)$ by ($\S$).
Thus, by Theorem~\ref{thm:lco},
\[
\chi_{DP}(G) \le \chi_{DP}(G^*) = \omega(G^*) \le \omega(G) + i(G)
\]
and, if $G$ is non-chordal and $K_n$-minor-free, then the right hand side of the second inequality above may be  replaced with $n-2+i(G)$.
\hfill {\large $\Box$}

\bigskip

By \eqref{eqn:chi,l,dp}, Theorem~\ref{thm:coloring2} gives $\chi_l(G) \le \omega(G) + i(G)$ for a graph $G$ and $\chi_l(G) \le n-2 + i(G)$ if $G$ is non-chordal and $K_n$-minor-free.
Actually, the inequality $\chi_l(G) \le \omega(G)+i(G)$ is sharp and accordingly so is the first inequality given in Theorem~\ref{thm:coloring2}.
To show it, we need the following proposition.

Given a graph $G$, we denote the independence number and the vertex cover number of $G$ by $\alpha(G)$ and $\beta(G)$, respectively.
It is well known that $\alpha(G)+\beta(G)=|V(G)|$.

\begin{Prop}\label{prop:degenerate}
Every graph $G$ is $\beta(G)$-degenerate.
\end{Prop}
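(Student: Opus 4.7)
The plan is to exploit the defining property of a vertex cover directly: outside the cover, the graph looks like an independent set, and inside the cover, there are only $\beta(G)$ vertices available.

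First I would fix a minimum vertex cover $S$ of $G$, so $|S|=\beta(G)$ and $I:=V(G)\setminus S$ is an independent set in $G$ (since every edge of $G$ has at least one endpoint in $S$). Then, to verify degeneracy, I would take an arbitrary nonempty subgraph $H$ of $G$ and produce a vertex of $H$ with at most $\beta(G)$ neighbors in $H$, splitting into two cases according to whether $V(H)$ meets $I$.

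In the first case, suppose $V(H)\cap I\neq\emptyset$ and pick any $v\in V(H)\cap I$. Every neighbor of $v$ in $G$ lies in $S$ (otherwise $v$ would have a neighbor in $I$, contradicting the independence of $I$), so in particular the neighbors of $v$ in $H$ lie in $V(H)\cap S$. Hence the degree of $v$ in $H$ is at most $|V(H)\cap S|\le |S|=\beta(G)$. In the second case, $V(H)\subseteq S$, so $|V(H)|\le\beta(G)$, and any vertex of $H$ has degree at most $|V(H)|-1\le\beta(G)-1$ in $H$. In either case $H$ contains a vertex of degree at most $\beta(G)$ in $H$, which is exactly the definition of $\beta(G)$-degeneracy.

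There is essentially no obstacle here; the only subtlety is being careful about the empty subgraph (for which the statement is vacuous) and remembering that the independence of $I$ is used with respect to $G$ but transfers automatically to $H$ because $H$ is a subgraph of $G$. No further structural machinery from earlier in the paper (holes, NC property, local chordalization) is needed.
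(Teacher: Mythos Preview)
Your proof is correct and essentially identical to the paper's own argument: the paper fixes a maximum independent set $I$ (equivalently, the complement of a minimum vertex cover) and carries out the same two-case analysis on an arbitrary subgraph $H$, obtaining the same degree bounds in each case.
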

\begin{proof}
Take a graph $G$.
Let $I$ be an independent set of $G$ with size $\alpha(G)$.
Take a subgraph $H$ of $G$.
Suppose $V(H) \cap I \neq \emptyset$.
Then, as $I$ is an independent set of $G$, $V(H) \cap I$ is an independent set of $H$.
Thus any vertex in $V(H) \cap I$ has degree at most $|V(H) \setminus I| \le |V(G) \setminus I| = \beta(G)$.
If $V(H) \cap I = \emptyset$, then $|V(H)| \le |V(G) \setminus I| = \beta(G)$, and so any vertex of $H$ has degree at most $\beta(G)-1$.
Hence $G$ is $\beta(G)$-degenerate.
\end{proof}

\noindent
We recall that if a graph $G$ is $k$-degenerate, then $\chi_{DP}(G) \le k+1$, from which the corollary below is immediately true.
As a matter of fact, the corollary enhances the known inequality $\chi(G) \le \beta(G)+1$ for a graph $G$.

\begin{Cor}\label{cor:independent}
For a graph $G$, $\chi_{DP}(G) \le \beta(G)+1$.
\end{Cor}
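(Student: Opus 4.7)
The plan is to obtain Corollary~\ref{cor:independent} as an immediate consequence of Proposition~\ref{prop:degenerate} combined with the Dvo\v{r}\'{a}k--Postle observation cited earlier in the paper. Recall that this observation states: if a graph $G$ is $k$-degenerate, then $\chi_{DP}(G) \le k+1$.

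First I would invoke Proposition~\ref{prop:degenerate} to conclude that the given graph $G$ is $\beta(G)$-degenerate. Then I would apply the Dvo\v{r}\'{a}k--Postle bound with $k=\beta(G)$ to obtain $\chi_{DP}(G) \le \beta(G)+1$, which is exactly the statement. No further case analysis or auxiliary construction is needed.

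There is essentially no obstacle here: the corollary is a direct chaining of two already-stated results, and the sentence ``We recall that if a graph $G$ is $k$-degenerate, then $\chi_{DP}(G) \le k+1$, from which the corollary below is immediately true'' in the excerpt makes it explicit that the authors intend precisely this one-line derivation. The only thing worth flagging is the remark that this refines the classical bound $\chi(G) \le \beta(G)+1$ by replacing $\chi$ with the larger parameter $\chi_{DP}$, so no separate argument is needed for $\chi$ or $\chi_l$ in view of \eqref{eqn:chi,l,dp}.
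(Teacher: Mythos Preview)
Your proposal is correct and matches the paper's own argument exactly: the paper states just before the corollary that the Dvo\v{r}\'{a}k--Postle bound for $k$-degenerate graphs, together with Proposition~\ref{prop:degenerate}, makes the corollary immediately true. There is nothing to add.
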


Consider a complete graph $K_n$ with $n \ge 2$.
Then $\alpha(K_n)=1$, $\beta(K_n)=|V(K_n)|-1$, and $\chi(K_n) = \chi_l(K_n) = \chi_{DP}(K_n) = |V(K_n)| = \beta(K_n)+1$.
Hence the upper bound for $\chi_{DP}(K_n)$ in Corollary~\ref{cor:independent} is sharp.

For a complete graph $K_n$ with $n \ge 2$, the inequality given in Corollary~\ref{cor:independent} is sharp even for $\chi(K_n)$ and $\chi_l(K_n)$ as we have seen above.
Yet, it is not necessarily in that way as it is known that $\beta(C_4)=2$, $\chi(C_4)=\chi_l(C_4)=2 < \beta(C_4)+1$, and $\chi_{DP}(C_4)=3=\beta(C_4)+1$.

\noindent
Now we are ready to present the following theorem, which implies that the inequality $\chi_l(G) \le \omega(G)+i(G)$ is sharp (and so $\chi_{DP}(G) \le \omega(G)+i(G)$ is sharp).

\begin{Thm}\label{thm:omega+i is sharp}
For a positive integer $s$ and a nonnegative integer $t$, there is a graph $G$ with $\chi(G)=\omega(G)=s+1$, $i(G)=t$, and $\chi_l(G)=s+t+1$.
\end{Thm}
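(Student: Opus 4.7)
The plan is to take $G := K_{s-1} + K_{t+1, N}$ (the graph join), with $N := (s+t)^{s+t}$; when $t=0$ this construction works (and may be replaced by $K_{s+1}$). First I would verify $\omega(G)=\chi(G)=s+1$: every clique of a join splits as a clique in each factor, and $\omega(K_{t+1,N})=2$, yielding $\omega(G)=(s-1)+2=s+1$, realized by $K_{s-1}$ together with any edge of $K_{t+1,N}$; the same split yields a proper $(s+1)$-coloring, hence $\chi(G)=s+1$.

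Next I would compute $i(G)$. Since each vertex of $K_{s-1}$ is adjacent to all other vertices, every cycle meeting $K_{s-1}$ has chords, so the holes of $G$ are precisely the induced $4$-cycles of $K_{t+1,N}$ and $i(G)=i(K_{t+1,N})$. For the upper bound $i(K_{t+1,N})\le t$, write $A=\{a_1,\ldots,a_{t+1}\}$ for the shorter side and consider the ordered partition $(\{a_1\},\ldots,\{a_t\})$ of the hole cover $\CCC=\{a_1,\ldots,a_t\}$. By induction on $i$, after the $i$-th local chordalization step the set $\{a_1,\ldots,a_i,a_{t+1}\}$ forms a clique in $G_i^*$, so every remaining $4$-cycle hole must contain $a_{i+1}$; thus $\{a_{i+1}\}$ is a singleton hole cover trivially satisfying the NC property, and the induction closes. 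The matching lower bound $i(K_{t+1,N})\ge t$ follows from Theorem~\ref{thm:coloring2} applied in reverse together with the classical Erd\H{o}s--Rubin--Taylor bound $\chi_l(K_{t+1,N})\ge t+2$, which holds because $N\ge (t+1)^{t+1}$.

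For $\chi_l(G)=s+t+1$, Theorem~\ref{thm:coloring2} supplies the upper bound $\omega(G)+i(G)=s+t+1$ directly. I would obtain the lower bound by designing a bad $(s+t)$-list assignment: assign the $s-1$ vertices of $K_{s-1}$ and the $t+1$ vertices $a_1,\ldots,a_{t+1}$ pairwise disjoint $(s+t)$-color lists. Any coloring of these $s+t$ vertices must then pick one color from each list, producing a tuple in a Cartesian product of cardinality $(s+t)^{s+t}=N$. For each such tuple I would include one vertex $b$ on the $N$-side with list exactly equal to that tuple; since $b$ is adjacent to all of $K_{s-1}\cup\{a_1,\ldots,a_{t+1}\}$, it is forced to avoid every color appearing in its own list, so no valid color remains. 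This shows $\chi_l(G)\ge s+t+1$ and completes the argument.

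The main obstacle is the lower bound $\chi_l(G)\ge s+t+1$: one must engineer the list assignment so that \emph{every} coloring extension from the ``top layer'' $K_{s-1}\cup\{a_1,\ldots,a_{t+1}\}$ is obstructed by some trap vertex on the $N$-side. Making the lists on the top layer pairwise disjoint is the decisive trick, because it decouples the coloring choices into an independent Cartesian product; setting $N=(s+t)^{s+t}$ then guarantees exactly one trap vertex for each of the $(s+t)^{s+t}$ tuples.
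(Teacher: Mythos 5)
Your graph $K_{s-1}\vee K_{t+1,N}$ with $N=(s+t)^{s+t}$ is precisely the paper's $K_{1+m_1,\ldots,1+m_s,m}$ with $m_1=\cdots=m_{s-1}=0$ and $m_s=t$, and your argument is essentially the paper's: a size-$t$ hole cover split into singletons gives $i(G)\le t$, a disjoint-list (Erd\H{o}s--Rubin--Taylor/Gravier) assignment on the $s+t$ vertices joined to the $N$-side gives $\chi_l(G)\ge s+t+1$ (the paper cites Gravier, you prove it inline), and Theorem~\ref{thm:coloring2} yields $i(G)\ge t$. The proof is correct; just note that the step $i(G)=i(K_{t+1,N})$ is not immediate from ``the holes coincide'' and should be backed by Remark~\ref{rmk:local2} (i.e.\ $i(G)=i(G[\Omega(G)])$), or avoided altogether by applying Theorem~\ref{thm:coloring2} to $G$ itself, as the paper does.
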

\begin{proof}
If $t=0$, then we let $G=K_{s+1}$.
Suppose $t \ge 1$.
We may represent $t$ as the sum of $s$ nonnegative integers, that is, $t=\sum_{i=1}^{s}m_i$ for nonnegative integers $m_1, m_2, \ldots, m_s$.
Let $G$ be a graph isomorphic to $K_{1+m_1, 1+m_2, \ldots, 1+m_s, m}$ where $m = \left(s+t  \right)^{ \left(s+t  \right)}$.
Let $V_1$, $V_2$, $\ldots$, $V_s$, and $V_{s+1}$ be the partite sets of $G$ with $|V_i|=m_i+1$ for $i = 1$, $\ldots$, $s$ and $|V_{s+1}|=m$.
Now we take a vertex $v_i$ from $V_i$ for $i = 1$, $\ldots$, $s$.
Then $\CCC := \bigcup_{i=1}^{s}\left(V_i \setminus \{v_i\}\right)$ is a hole cover of $G$ with size $\sum_{i=1}^{s}m_i = t$.
Let $\CCC_1,  \CCC_2, \ldots, \CCC_t$ be all the singleton subsets of $\CCC$.
Then it is easy to check that $(\CCC_1, \CCC_2, \ldots, \CCC_t)$ is a local chordalization partition of $\CCC$.
Thus $i(G) \le t$.

On the other hand, it is obvious that $\omega(G)=s+1$.
Then, as it is easy to check that a complete multipartite graph is perfect, $$\chi(G)=\omega(G)=s+1.$$
Since $|V_{s+1}|=m$ and $|V(G) \setminus V_{s+1}| = \sum_{i=1}^{s} (1+m_i) = s+t$,
\begin{equation}\label{eqn:chi 1}
\chi_{DP}(G) \le s+t+1
\end{equation}
by Corollary~\ref{cor:independent}.
In addition, $\bigcup_{i=1}^{s}V_i$ and $V_{s+1}$ form two disjoint vertex sets of $G$ with sizes $s+t$ and $(s+t)^{(s+t)}$, respectively, so $G$ contains $K_{s+t, (s+t)^{s+t}}$ as a subgraph.
Then, from the observation made by Gravier~\cite{gravier1996hajos} that $\chi_l(K_{k, k^k}) > k$ for any positive integer $k$, we obtain
\begin{equation}\label{eqn:chi 2}
\chi_l(G) \ge s+t+1.
\end{equation}
Thus, by \eqref{eqn:chi,l,dp}, \eqref{eqn:chi 1}, and \eqref{eqn:chi 2}, $s+t+1 \le \chi_l(G)=\chi_{DP}(G) \le s+t+1$ and so $\chi_l(G)=s+t+1$.
Since $\omega(G)=s+1$, $i(G) \ge t$ by Theorem~\ref{thm:coloring2}.
As we have shown that $i(G) \le t$, we complete the proof.
\end{proof}

It is worthy of attention that Theorem~\ref{thm:omega+i is sharp} guarantees the existence of a graph $G$ with $i(G)=t$ for any nonnegative integer $t$.

We recall that $\omega(G) \le \chi(G) \le \chi_l(G) \le \chi_{DP}(G)$ for a graph $G$ and that the gaps between $\omega(G)$ and $\chi(G)$, between $\chi(G)$ and $\chi_l(G)$, and between $\chi_l(G)$ and $\chi_{DP}(G)$ can be arbitrarily large.
Yet, Theorem~\ref{thm:coloring2} tells us that the sum of those gaps cannot exceed $i(G)$.
Especially, if $G$ satisfies the NC property, then those gaps cannot exceed one and at most one of them can be one.

\subsection{
Making a local chordalization really local}

In this section, we devote ourselves to convincing readers that the ``local'' in our terminology ``local chordalization'' makes a sense.

%



Let $G$ be a non-chordal graph and $\Omega(G) = \bigcup_{H \in \HHH(G)} V(H)$.
We define a relation $\sim_G$ on $\Omega(G)$ so that, for $u, v \in \Omega(G)$,
\begin{align*}
u \sim_G v \Leftrightarrow & \text{ either $u$ and $v$ are on the same hole or there exists a sequence $H_1, \ldots, H_t$ }\\ &\text{ of
 distinct holes in $\HHH(G)$ such that $u \in H_1$, $v \in H_t$, and $H_i$ and $H_{i+1}$ share } \\ &\text{ a vertex for each $i=1,\ldots,t-1$}.
\end{align*}
It is easy to see that $\sim_G$ is an equivalence relation and that, for each hole in $G$, the vertices on the hole belong to the same equivalence class.

\begin{Prop}\label{prop:local}
Let $G$ be a non-chordal graph, $H$ be a hole in $G$, and $S$ be the equivalence class under $\sim_G$ containing $V(H)$.
If adding a chord of $H$ to $G$ yields a new hole $H^*$, then $V(H^*) \subset S$.
\end{Prop}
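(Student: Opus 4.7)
The plan is, for each vertex of $H^*$, to produce a hole of $G$ that places it in the equivalence class $S$ via $\sim_G$. Because $H^*$ is a hole of $G+e$ but not of $G$, it must use the new edge $e=xy$, so I may write $H^* = x\, v_1\, v_2 \cdots v_{k-1}\, y\, x$, where $P := x v_1 \cdots v_{k-1} y$ is an induced $(x,y)$-path of $G$ of length $k \ge 3$. The endpoints $x, y$ lie in $V(H) \subset S$, so it suffices to show $v_i \in S$ for every $1 \le i \le k-1$ with $v_i \notin V(H)$.

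Fix such a $v_i$, and set $a = \max\{j<i : v_j \in V(H)\}$ and $b = \min\{j>i : v_j \in V(H)\}$; both exist because $v_0, v_k \in V(H)$, and $b - a \ge 2$. Since $P$ is induced, $v_a v_b \notin E(G)$, so $v_a$ and $v_b$ are non-adjacent on $H$ and both $(v_a, v_b)$-sections $R_1, R_2$ of $H$ have length at least two. For each $j \in \{1,2\}$, the subpath $v_a v_{a+1} \cdots v_b$ together with $R_j$ reversed forms a simple cycle $C_j$ of length $\ge 4$, and inducedness of $P$ and $H$ forces every chord of $C_j$ in $G$ to run between an internal vertex of $v_a \cdots v_b$ and an internal vertex of $R_j$.

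Suppose first that, for some $j$, $v_i$ is not incident to a chord of $C_j$. Apply Lemma~\ref{lem:chord} with $C = C_j$, $Q = v_a \cdots v_b$, and the length-two subpath $v_{i-1} v_i v_{i+1}$ of $Q$: this produces a hole $H'$ in $G$ with $v_i \in V(H') \subset V(C_j)$ meeting $V(C_j) \setminus V(Q)$, the latter being the set of internal vertices of $R_j$, a subset of $V(H)$. Hence $v_i$ and a vertex of $V(H) \subset S$ both lie on the hole $H'$, giving $v_i \in S$. Otherwise $v_i$ is adjacent to an internal vertex of $R_1$ and to an internal vertex of $R_2$, so it has at least two neighbors on $H$ in distinct sections. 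Let $N := N_G(v_i) \cap V(H)$; since the $H^*$-neighbors of $v_i$ are $v_{i-1}, v_{i+1}$, the edges $v_i x$ and $v_i y$ (when present in $G$) would be chords of $H^*$ except in the cases $i = 1$ and $i = k-1$ respectively, so $\{x,y\} \not\subset N$ and $V(H) \setminus N \ne \emptyset$. Pick an arc $A$ of $V(H) \setminus N$ on $H$ with flanking vertices $w_1', w_2' \in N$. A short case analysis on $|N|$ shows these flanks are non-adjacent on $H$: if $|N| = 2$, then $N$ consists of the two prescribed neighbors in the interiors of $R_1$ and $R_2$, which lie in distinct sections and are therefore non-adjacent on $H$; if $|N| \ge 3$, adjacent flanks would force $V(H) \setminus N$ to be a single arc of size $|H|-2$, contradicting $|N| \ge 3$. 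The $H$-section from $w_1'$ to $w_2'$ through $A$ is then an induced subpath of length $\ge 2$ whose interior lies in $V(H) \setminus N$, so the cycle $v_i w_1' \cdots w_2' v_i$ is a hole of $G$ through $v_i$ and vertices of $V(H) \subset S$; hence $v_i \in S$.

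The main obstacle is the chord-freeness check in the second sub-case — in particular, ruling out a chord $w_1' w_2'$ in the arc-flanking cycle, which is what drives the mini case analysis on $|N|$ — together with the subtler point that the chordlessness of $H^*$ in $G + e$ alone is what forces $V(H) \setminus N$ to be non-empty.
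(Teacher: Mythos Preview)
Your argument is correct. The overall strategy matches the paper's: for each vertex of $H^*$ off $H$, manufacture a hole of $G$ that contains it together with a vertex of $H$, by a case split on whether the vertex sees one or both ``sides'' of $H$. The execution differs in two respects. First, you localize: rather than working with the $(x,y)$-sections of $H$ determined by the chord endpoints, you pass to the nearest $H$-vertices $v_a,v_b$ along $P$ and use the $(v_a,v_b)$-sections, which keeps the auxiliary cycles $C_j$ clean (no stray $P$-vertices appear on the $H$-side). Second, the tools in the two cases are swapped relative to the paper: in the one-sided case you invoke Lemma~\ref{lem:chord}, whereas the paper builds the hole directly by traversing $H^*$ to its first meetings with the avoided section; in the two-sided case you build the hole directly via the arc-and-flank construction, whereas the paper invokes Lemma~\ref{lem:hole}. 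Your arc construction is essentially the proof of Lemma~\ref{lem:hole} specialized to a one-vertex path, so neither route is materially shorter; the localization to $v_a,v_b$ is arguably the tidier setup, and your use of the chordlessness of $H^*$ in $G+e$ to force $V(H)\setminus N\neq\emptyset$ is the exact analogue of the paper's observation that $x$ must be nonadjacent to one of $u,v$.
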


%
%

\begin{proof}
Since $H$ is a hole, there are two nonadjacent vertices $u$ and $v$ on $H$.
Suppose that adding the edge joining $u$ and $v$ to $G$ creates a new hole $H^*$.
Obviously $uv$ is a chord of $H$ in $G+uv$.
Let $x$ be a vertex in $H^*$ other than $u$ and $v$.
It suffices to show $x \in S$ to complete the proof.
If $x$ is on $H$, then we are done.
Thus we may assume that $x$ is not on $H$.

\noindent
{\it Case 1}. $x$ is adjacent to an internal vertex of each of the two $(u,v)$-sections of $H$.
Since $u$, $v$, and $x$ are on the hole $H^*$ with $u$ and $v$ consecutive on $H^*$, $x$ is nonadjacent to one of $u$ and $v$ in $G+uv$.
Without loss of generality, we may assume that $x$ is nonadjacent to $v$ in $G+uv$.
Obviously $x$ is nonadjacent to $v$ in $G$.
By applying Lemma~\ref{lem:hole} for $P=\{x\}$, there exists a hole in $G$ containing $x$ and $v$.
Therefore $x \sim_G v$.
Since $v \in S$, $x \in S$.

\noindent
{\it Case 2}. One of the two $(u,v)$-sections of $H$ has no internal vertex that is adjacent to $x$.
Let $R$ be such a $(u,v)$-section.
Then none of $x$ and its neighbors on $H^*$ is an internal vertex on $R$.
While traversing along the $(x,v)$-section (resp.\ $(x,u)$-section) of $H^*$ not containing $u$ (resp.\ $v$), let $y$ (resp.\ $z$) be the first vertex at which we meet $R$.
Let $Q_1$ be the $(y,z)$-section of $H^*$ containing $x$, $Q_2$ be the $(y,z)$-section of $R$, and $Q = Q_1Q_2$.
By the choices of $y$ and $z$, $Q$ is an induced cycle of $G$ containing $x$ and a vertex on $H$.
Since two neighbors of $x$ on $H^*$ are nonadjacent in $G$, $Q$ is a hole in $G$.
Since $Q$ contains $x$ and a vertex on $H$, $x \in S$.
\end{proof}

\begin{Rem}\label{rmk:local}
Let $G$ be a non-chordal graph and $\Omega(G) / {\sim_G}$ be the set of equivalence classes under $\sim_G$.
Take an equivalence class $S \in \Omega(G) / {\sim_G}$, a hole $H$ with $V(H) \subset S$, and vertices $u$ and $v$ on $H$ which are not consecutive.
Proposition~\ref{prop:local} implies that
the equivalence classes in $\Omega(G)/{\sim_G}$ except $S$ are still  equivalence classes under $\sim_{G+uv}$,
and if there are other equivalence classes under $\sim_{G+uv}$,
they are disjoint subsets of $S$.
Therefore $\Omega(G+uv) \subset \Omega(G)$.
\end{Rem}

\begin{Rem}\label{rmk:local2}
Let $G$ be a non-chordal graph and $\ell = i(G)$.
By the definition of $i(G)$,
there exist a hole cover $\CCC$ of $G$ and a local chordalization partition $\tilde{\CCC} = (\CCC_1, \ldots, \CCC_\ell)$ of $\CCC$.
Let
\begin{equation}\label{eqn:chordalchain}
G_0 = G_0^* <_{\CCC_1} G_1 \le G_1^* <_{\CCC_2}  G_2 \le G_2^* < \cdots <_{\CCC_\ell} G_\ell \le G_\ell^* =: G^*
\end{equation}
be the chordalization chain corresponding to $\tilde{\CCC}$.
Let $H$ be the subgraph of $G$ induced by $\Omega(G)$.
Then, by the definition of induced subgraph, all the holes in $H$ are contained in $G$.
By the definition of $\Omega(G)$, all the holes in $G$ are contained in $H$.
Therefore $\HHH(G) = \HHH(H)$, $\Omega(G)  = \Omega(H)$, and $\CCC$ is a hole cover of $H$.
Thus the equivalence classes under $\sim_G$ are the equivalence classes under $\sim_H$.
We recall that
\begin{align}
G_0 &= G_0^* = G - \CCC; \label{eqn:rmklocal1} \\
V(G_i) &= V(G_{i-1}^*) \cup \CCC_{i}, \ E(G_i) = E(G_{i-1}^*) \cup E \left( G - \bigcup_{j=i+1}^\ell \CCC_j\right); \label{eqn:rmklocal2} \\
G_i^* &= \widehat{G_i}(\CCC_i). \notag
\end{align}
for each $i=1,\ldots,\ell$.
Let $H_0 = H_0^* = H - \CCC$.
Since $H$ is an induced subgraph of $G$, $H_0$ is an induced subgraph of $G_0$ by \eqref{eqn:rmklocal1}.
Furthermore, $G_0$, $G_0^*$, $H_0$, and $H_0^*$ are chordal and so $\HHH(G_0^*) = \HHH(G_0) = \HHH(H_0) = \HHH(H_0^*) = \emptyset$ and $\Omega(G_0^*) = \Omega(G_0) = \Omega(H_0) = \Omega(H_0^*)=\emptyset$.
Let $H_1$ be the graph defined by $V(H_{1}) = V(H_{0}^*) \cup \CCC_{1}$ and
\[
         E(H_{1}) = E(H_{0}^*) \cup E\left(H -  \bigcup_{j=2}^\ell \CCC_j\right).
\]
Since $H$ and $H_0^*$ are induced subgraphs of $G$ and $G_0^*$, respectively, $H_{1}$ is an induced subgraph of $G_{1}$ and $\HHH(H_1) \subset \HHH(G_1)$ by~\eqref{eqn:rmklocal2}.
Take a hole $\Omega_1$ in $G_1$.
Since $G_1$ is an induced subgraph of $G$,
$V(\Omega_1) \subset \Omega(G) \setminus \bigcup_{i=2}^\ell \CCC_i$.
Since $\Omega(G) = V(H)$ and $V(H) \setminus \bigcup_{i=2}^\ell \CCC_i = V(H_1)$, $V(\Omega_1) \subset  V(H_1)$.
Since $H_1$ is an induced subgraph of $G$, $\Omega_1$ is a hole in $H_1$.
Thus we have shown that $\HHH(H_1) = \HHH(G_1)$.
Hence, since $\CCC_1$ is a hole cover of $G_1$ satisfying the NC property, it is a hole cover of $H_1$ satisfying the NC property and so we obtain $\widehat{H_1}(\CCC_1) =: H_1^*$.
Since $\HHH(H_1) = \HHH(G_1)$ and $H_1$ is an induced subgraph of $G_1$, $H_1^*$ is an induced subgraph of $G_1^*$.
Let $H_2$ be the graph defined by $V(H_{2}) = V(H_{1}^*) \cup \CCC_{2}$ and
\[
         E(H_{2}) = E(H_{1}^*) \cup E\left(H -  \bigcup_{j=3}^\ell \CCC_j\right).
\]
Then  $\Omega(G)\setminus \bigcup_{i=3}^\ell \CCC_i = V(H_2)$.
Since $H$ and $H_1^*$ are induced subgraphs of $G$ and $G_1^*$, respectively, $H_{2}$ is an induced subgraph of $G_{2}$ and $\HHH(H_2) \subset \HHH(G_2)$ by~\eqref{eqn:rmklocal2}.
Take a hole $\Omega_2$ in $G_2$.
Since $G_1^*$ is chordal, $\Omega_2$ must contain a vertex $v$ in $\CCC_2$.
By the way, since $\CCC_2$ is a hole cover of $G_2$ satisfying the NC property, $\Omega_2$ contains exactly one vertex in $\CCC_2$ and so $v$ is the only vertex on $\Omega_2$ that is contained in $\CCC_2$.

Since $G$ is non-chordal, there exist a hole in $G$.
The chain given in \eqref{eqn:chordalchain} is the shortest, one of the holes in $G$ must be in $G_1$.
Thus there exists an edge in $E(G_1^*) \setminus E(G_1)$.
Take an edge $e$ in $E(G_1^*) \setminus E(G_1)$.
Then there is a hole in $G$ such that $e$ is its chord in $G+e$.
By Proposition~\ref{prop:local}, $\Omega(G+e) \subset \Omega(G)$.

If $E(G_1^*) \setminus E(G_1) = \{e\}$, then, by Proposition~\ref{prop:local},
$V(\Omega_2) \subset \Omega(G_2) \subset \Omega(G +e) \subset \Omega(G)$ and so $V(\Omega_2) \subset \Omega(G)$.
Suppose that $E(G_1^*) \setminus (E(G_1) \cup \{e\}) \neq \emptyset$ and take an edge $e'$ in $E(G_1^*) \setminus (E(G_1) \cup \{e\})$.
Then there is a hole $C$ in $G$ such that $e'$ is its chord in $G+e'$.
Now there is a hole in $G+e$ such that $e'$ is its chord in $G \cup \{e,e'\}$.
For, if $C$ is a hole in $G+e$, then it is such a hole.
Otherwise, by the definition of local chordalization, $e$ is a chord of $C$ and $e'$ is a chord of a hole from $C+e$.

By applying Proposition~\ref{prop:local} for $G+e$ and an edge $e'$, $\Omega(G \cup \{e,e'\}) \subset \Omega(G)$.
We may repeat this argument to conclude that $\Omega(G \cup (E(G_1^*) \setminus E(G_1))) \subset \Omega(G)$.
Since $G_2$ is an induced subgraph of $G \cup (E(G_1^*) \setminus E(G_1))$ and $\Omega_2$ is a hole in $G_2$,
\[
V(\Omega_2) \subset \Omega(G_2) \subset \Omega(G \cup (E(G_1^*) \setminus E(G_1))) \subset \Omega(G),
\]
and so $V(\Omega_2) \subset \Omega(G)$.
Therefore we have shown that $V(\Omega_2) \subset \Omega(G)$ whether or not $E(G_1^*) \setminus (E(G_1) \cup \{e\}) \neq \emptyset$.
Thus the vertices on $\Omega_2$ belong to $\Omega(G)\setminus \bigcup_{i=3}^\ell \CCC_i$.
Since $\Omega(G)\setminus \bigcup_{i=3}^\ell \CCC_i = V(H_2)$ and $H_2$ is an induced subgraph of $G_2$, $\Omega_2$ is a hole in $H_2$ and so $\HHH(G_2) \subset \HHH(H_2)$.
Thus $\HHH(G_2) = \HHH(H_2)$.
Hence, since $\CCC_2$ is a hole cover of $G_2$ satisfying the NC property, it is a hole cover of $H_2$ satisfying the NC property and so we obtain $\widehat{H_2}(\CCC_2) =: H_2^*$.
We may repeat this process to obtain $H_3, H_3^*, \ldots, H_\ell, H_\ell^*$ such that
\begin{align*}
V(H_i) &= V(H_{i-1}^*) \cup \CCC_{i}, \ E(H_i) = E(H_{i-1}^*) \cup E \left( H - \bigcup_{j=i+1}^\ell \CCC_j\right), \\ 
H_i^* &= \widehat{H_i}(\CCC_i), 
\end{align*}
and $\HHH(G_i) = \HHH(H_i)$ for $i=3,\ldots,\ell$.
Noting that $\HHH(G) = \HHH(H)$ and $G_\ell^*$ (resp.\  $H_\ell^*$) is a chordal completion of $G$ (resp.\ $H$), we may conclude that $i(H) \le \ell = i(G)$.

To show that $i(G) \le i(H)$, we need to introduce the chordalization chain corresponding to a local chordalization partition $\tilde{\CCC'}$ of a hole cover $\CCC'$ of $H$ terminating at $H_{i(H)}^*$.
By mimicking the previous argument constructing the chordalization chain corresponding to $\tilde{\CCC}$ for $H$, we may construct the chordalization chain corresponding to $\tilde{\CCC'}$ for $G$ to conclude $i(G) \le i(H)$.
Thus $i(G) = i(H)$ and it is sufficient to apply local chordalization process to the induced subgraph $H$ of $G$, which is a \emph{local} structure, to obtain a desired chordal completion of $G$.
In this vein, we may claim that the ``local'' in our terminology ``local chordalization'' is meaningful in another respect.
\end{Rem}

\begin{Ex}
The graph $G_2$ in Figure~\ref{fig:local} is obtained from $G_1$ by replacing the vertex $v$ of $G_1$ by the complete graph $K_n$.
Then $\Omega(G_1)=\Omega(G_2)$.
By the argument given in Remark~\ref{rmk:local2}, $i(G_1) = i(G_2)$.
Yet, the treewidths of $G_1$ and $G_2$ are $2$ and $n-1$, respectively.
\end{Ex}

\begin{figure}
\begin{center}
\begin{tikzpicture}[x=1.5cm, y=1.5cm]

    \vertex (x1) at (0,0) [label=above:$$]{};
    \vertex (x2) at (0,1) [label=above:$$]{};
    \vertex (x3) at (1,0) [label=above:$$]{};
    \vertex (v) at (1,1) [label=above:$$]{};
    \vertex (x5) at (1.6,1.6) [label=above right:$v$]{};
    \draw (0.5,-0.5) node{$G_1$};

    \path
    (x1) edge [-,thick] (x2)
    (x1) edge [-,thick] (x3)
    (v) edge [-,thick] (x3)
    (v) edge [-,thick] (x2)
    (v) edge [-,thick] (x5)
	;

    \vertex (y1) at (5,0) [label=above:$$]{};
    \vertex (y2) at (5,1) [label=above:$$]{};
    \vertex (y3) at (6,0) [label=above:$$]{};
    \vertex (v1) at (6,1) [label=above:$$]{};
    \vertex (v5) at (6.6,1.6) [label=above:$$]{};

    \draw[dashed] (7.0,2.0) circle (0.6) [label=right:$K_n$]{};
    \draw (7.0,2.0) node{$K_n$};
    \draw (5.5,-0.5) node{$G_2$};
    \path

    (y1) edge [-,thick] (y2)
    (y1) edge [-,thick] (y3)
    (v1) edge [-,thick] (y3)
    (v1) edge [-,thick] (y2)
    (v1) edge [-,thick] (v5)

    (y1) edge [-,thick] (y2)
    (y1) edge [-,thick] (y3)

    (y1) edge [-,thick] (y2)
    (y1) edge [-,thick] (y3)

	;

\end{tikzpicture}
\end{center}
\caption{$\Omega(G_1)=\Omega(G_2)$, so $i(G_1)=i(G_2)$ by the argument given in Remark~\ref{rmk:local2}.}
\label{fig:local}
\end{figure}
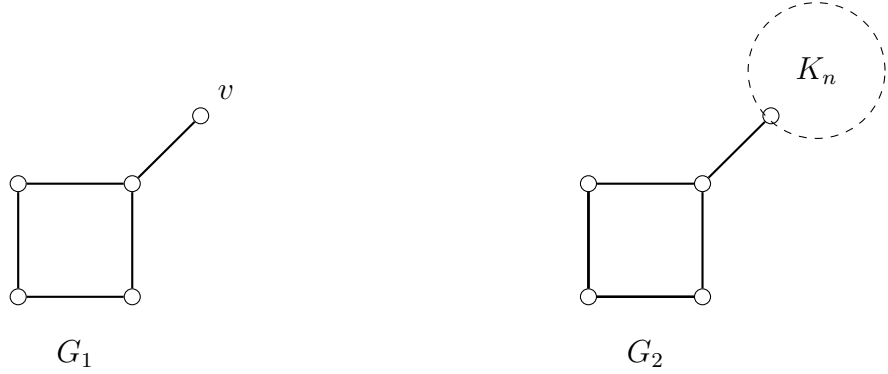

By the argument given in Remark~\ref{rmk:local}, the following proposition is true.

\begin{Prop}\label{prop:local2}
For a non-chordal graph $G$,
$i(G) = \max \{i(G[S_1]), \ldots, i(G[S_r])\}$ where $S_1, \ldots, S_r$ are the equivalence classes under $\sim_G$.
\end{Prop}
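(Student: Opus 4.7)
Plan: Both inequalities rest on the same structural fact, already used implicitly in Remark~\ref{rmk:local2}: every hole of $G$ has its vertex set contained in a single equivalence class $S_j$, so $\HHH(G)$ decomposes as the disjoint union of the $\HHH(G[S_j])$; and by Proposition~\ref{prop:local} every chord added in a local chordalization creates only new holes whose vertex set again lies in that same $S_j$. Consequently, at every stage of any chordalization chain, all holes of the current graph are confined to single classes of $\sim_G$, so local chordalization steps performed in different classes do not interact. Note that each $S_j$ contains the vertex set of at least one hole of $G$, which remains an induced cycle in $G[S_j]$; hence each $G[S_j]$ is non-chordal and $i(G[S_j])\ge 1$.

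For the inequality $\max_{j} i(G[S_j]) \le i(G)$, I would fix an optimal local chordalization partition $(\CCC_1, \ldots, \CCC_\ell)$ of a hole cover $\CCC$ of $G$ with $\ell = i(G)$, and for each class $S_j$ set $\CCC^{(j)} := \CCC \cap S_j$ and $\CCC^{(j)}_i := \CCC_i \cap S_j$. Discarding the empty $\CCC^{(j)}_i$ produces an ordered tuple of non-empty sets of length at most $\ell$ whose union is $\CCC^{(j)}$, and $\CCC^{(j)}$ is a hole cover of $G[S_j]$ because every hole of $G[S_j]$ is a hole of $G$ that meets $\CCC$ only inside $S_j$. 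By induction on $i$, the induced subgraph of each $G_i$ and $G_i^*$ in the chain for $G$ on $V(G_i) \cap S_j$ is exactly the corresponding graph of the chordalization chain for $G[S_j]$ with the reduced partition: at indices with $\CCC^{(j)}_i = \emptyset$ no new edge inside $S_j$ is added, while at indices with $\CCC^{(j)}_i \ne \emptyset$ the newly added edges inside $S_j$ are precisely those produced by locally chordalizing the restriction by $\CCC^{(j)}_i$. The NC property of $\CCC_i$ in $G_i$ restricts to the NC property of $\CCC^{(j)}_i$ in the restriction because all holes in question lie in $S_j$. Since the final graph $G_\ell^*$ is chordal, so is its induced subgraph on $S_j$, which is the terminal graph of the reduced chain; hence the reduced tuple is a local chordalization partition of $\CCC^{(j)}$ in $G[S_j]$ and $i(G[S_j]) \le \ell$.

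For the reverse inequality $i(G) \le t := \max_{j} i(G[S_j])$, I would choose for each $j$ a local chordalization partition $(\PPP^{(j)}_1, \ldots, \PPP^{(j)}_{t_j})$ of a hole cover $\PPP^{(j)}$ of $G[S_j]$ with $t_j \le t$, pad each tuple to length $t$ with empty sets on the right, and define $\CCC_i := \bigcup_{j=1}^r \PPP^{(j)}_i$ for $i = 1, \ldots, t$. Each $\CCC_i$ is non-empty because some $t_j = t$, and $\CCC := \bigcup_{i=1}^t \CCC_i$ is a hole cover of $G$ since every hole of $G$ lies in some $S_j$ and is covered by $\PPP^{(j)}$. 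Running the construction of Theorem~\ref{thm:existence} with $(\CCC_1, \ldots, \CCC_t)$, the argument of the previous paragraph applied in reverse shows that the restriction of the resulting chain to each $S_j$ coincides with the local chain for $G[S_j]$, possibly followed by trivial steps once $i > t_j$. The NC property of $\CCC_i$ in $G_i$ is assembled class-by-class from the NC properties of the $\PPP^{(j)}_i$'s, again using the confinement of holes to single classes, and the chordality of $G_i^*$ follows because any hole of $G_i^*$ would have to lie in some single $S_j$, while $G_i^*[S_j]$ is chordal by construction. Thus $(\CCC_1, \ldots, \CCC_t)$ is a local chordalization partition of $\CCC$ in $G$, giving $i(G) \le t$.

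The main obstacle is purely bookkeeping: verifying carefully that the restriction of the global chordalization chain to each $S_j$ coincides with the local chain for $G[S_j]$, and that the NC property transfers between the global and the local settings. The crucial ingredient throughout is Proposition~\ref{prop:local}, together with the containment $\Omega(G_i) \subset \Omega(G)$ and the refinement-of-classes argument already established in Remark~\ref{rmk:local2}, which together guarantee that no hole ever spreads outside its original equivalence class and that $G$-edges restored between different classes at each step never lie on any hole.
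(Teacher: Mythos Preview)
Your proposal is correct and follows essentially the same approach as the paper: restrict an optimal global local-chordalization partition to each class $S_j$ to get $i(G[S_j])\le i(G)$, and conversely pad and union optimal local partitions to get $i(G)\le \max_j i(G[S_j])$, with the confinement of holes to single classes (via Proposition~\ref{prop:local} and the argument of Remarks~\ref{rmk:local}--\ref{rmk:local2}) ensuring that the chordalization chains factor class-by-class. The paper's proof is terser, simply invoking ``the argument given in Remark~\ref{rmk:local}'' for both directions, whereas you spell out the inductive bookkeeping more explicitly; but the substance is the same.
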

\begin{proof}
Let $G$ be a graph and $\tilde{\CCC} = (\CCC_1, \ldots, \CCC_{i(G)} )$ be a local chordalization partition of a hole cover $\CCC$ of $G$.
Then, for each $j=1, \ldots, i(G)$, $\CCC \cap S_j$ is a hole cover of $G[S_j]$.
In addition, by the argument given in Remark~\ref{rmk:local}, a subset of $\{\CCC_1 \cap S_j, \ldots, \CCC_{i(G)} \cap S_j\}$ forms a local chordalization partition of $\CCC \cap S_j$.
Thus $i(G[S_j]) \le i(G)$ for each $j=1, \ldots, i(G)$ and so $i(G) \ge \max \{i(G[S_1]), \ldots, i(G[S_r])\}$.


Now let $\tilde{\CCC^j}=(\CCC^j_1, \ldots, \CCC^j_{i(G[S_j])})$ be a local chordalization partition of a hole cover $\CCC^j$ of $G[S_j]$ for each $j=1, \ldots, r$.
Clearly $\bigcup_{j=1}^{r}\CCC^j$ is a hole cover of $G$.
In addition, by the argument given in Remark~\ref{rmk:local}, $(\bigcup_{j=1}^{r}\CCC^j_1, \bigcup_{j=1}^{r}\CCC^j_2, \ldots, \bigcup_{j=1}^{r}\CCC^j_{\max \{i(G[S_1]), \ldots, i(G[S_r])\}})$ is a local chordalization partition of $\bigcup_{j=1}^{r}\CCC^j$ where
 $C^j_p = \emptyset$ for any $j=1, \ldots, r$ and any $p$, $i(G[S_j]) < p \le  \max \{i(G[S_1]), \ldots, i(G[S_r])\}$.
Hence $\max \{i(G[S_1]), \ldots, i(G[S_r])\} \ge i(G)$.
\end{proof}

The \emph{join}, denoted by $G_1 \vee G_2$, of two graphs $G_1$ and $G_2$ is the graph with the vertex set $V(G_1) \cup V(G_2)$ and the edge set $E(G_1) \cup E(G_2) \cup \{uv \mid u \in V(G_1) \text{ and } v \in V(G_2)\}$.
We denote by $I_m$ an empty graph with $m$ vertices.

\begin{Thm}\label{kmnfree}
Suppose that a non-chordal graph $G$ does not contain $I_m \vee K_n$ for positive integers $m \ge n$ as a subgraph and $\omega(G[\Omega(G)]) + i(G) \le m$.
Then there is a chordal completion $G^*$ of $G$ with $\omega(G^*) < m+n$.
\end{Thm}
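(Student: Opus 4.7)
The plan is to show that the chordal completion $G^{*}$ produced by Theorem~\ref{thm:lco} already satisfies $\omega(G^{*})<m+n$. Suppose, for contradiction, that some clique $K$ of $G^{*}$ has $|K|\ge m+n$, and split $K=K_{1}\cup K_{2}$ where $K_{1}:=K\cap\Omega(G)$ and $K_{2}:=K\setminus\Omega(G)$.

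The first step bounds $|K_{1}|$. Applying Theorem~\ref{thm:lco} to the induced subgraph $H:=G[\Omega(G)]$ yields $\omega(G^{*}[\Omega(G)])\le\omega(H)+i(G)=\omega(G[\Omega(G)])+i(G)\le m$. Since $K_{1}$ is a clique of $G^{*}[\Omega(G)]$, I conclude $|K_{1}|\le m$, and hence $|K_{2}|\ge n$.

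The second step is the key locality observation: every edge in $E(G^{*})\setminus E(G)$ has both endpoints in $\Omega(G)$. Indeed, any such edge is introduced at some stage of the chordalization chain $G_{0}=G_{0}^{*}<_{\CCC_{1}}G_{1}\le G_{1}^{*}<\cdots<_{\CCC_{\ell}}G_{\ell}\le G_{\ell}^{*}=G^{*}$ as an edge $uv$ with $u\in\CCC_{i}$ and $v$ another vertex of a hole $H'$ of $G_{i}$ containing $u$; by the argument in Remark~\ref{rmk:local2} (which is in turn fueled by Proposition~\ref{prop:local}), the vertex set of every hole of every $G_{i}$ is contained in $\Omega(G)$, so $u,v\in\Omega(G)$. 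Consequently any edge of $G^{*}$ incident to a vertex outside $\Omega(G)$ already lies in $E(G)$, from which I deduce that $K_{2}$ is a clique in $G$ and every vertex of $K_{1}$ is adjacent in $G$ to every vertex of $K_{2}$.

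Finally, I exhibit $I_{m}\vee K_{n}$ as a subgraph of $G$, contradicting the hypothesis. Pick any $B\subseteq K_{2}$ with $|B|=n$ (possible since $|K_{2}|\ge n$); then $B$ is a clique in $G$. Pick any $A\subseteq K\setminus B$ with $|A|=m$ (possible since $|K|\ge m+n$). Every $a\in A$ lies either in $K_{1}$, hence is adjacent in $G$ to all of $K_{2}\supseteq B$, or in $K_{2}\setminus B$, hence is adjacent in $G$ to $B$ by the clique property of $K_{2}$. The subgraph of $G$ on the vertex set $A\cup B$ consisting of the $\binom{n}{2}$ edges inside $B$ together with the $mn$ cross-edges is then exactly $I_{m}\vee K_{n}$, a contradiction. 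The main obstacle is the locality observation of the second step: because the intermediate graphs $G_{i}^{*}$ are genuine supergraphs of induced subgraphs of $G$, the fact that newly added edges cannot escape $\Omega(G)$ is not immediate and must be extracted from Remark~\ref{rmk:local2}; once that is in hand, the rest of the argument is a short cardinality split.
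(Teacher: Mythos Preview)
Your proof is correct and follows essentially the same approach as the paper's: split a hypothetical large clique $K$ of $G^{*}$ into $K\cap\Omega(G)$ and $K\setminus\Omega(G)$, bound the first part by $m$ via Theorem~\ref{thm:lco} applied to the induced subgraph $G[\Omega(G)]$, use the locality observation from Remark~\ref{rmk:local2} to see that every edge of $G^{*}$ touching $K\setminus\Omega(G)$ already lies in $G$, and then assemble an $I_{m}\vee K_{n}$ subgraph of $G$. Your explicit choice of $A$ and $B$ is only a cosmetic variant of the paper's ``move $m-|K\cap\Omega(G)|$ vertices from $K\setminus\Omega(G)$ into $K\cap\Omega(G)$'' phrasing.
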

\begin{proof}
Since $G$ is non-chordal, $i(G) \ge 1$.
Let $H$ be the subgraph of $G$ induced by $\Omega(G)$.
By the argument in Remark~\ref{rmk:local2}, $i(G) = i(H)$.
Let $H^*$ be the subgraph of $G^*$ induced by $\Omega(G)$ where $G^*$ is a chordal completion of $G$ obtained in Remark~\ref{rmk:local2}.
Then $H^*$ is a chordal completion of $H$.

Suppose to the contrary that $\omega(G^*) \ge m+n$.
Then there is a clique $K$ of size $m+n$ in $G^*$.
Clearly $K \cap \Omega(G)$ forms a clique in $G^*$.
Since $H^*$ is an induced subgraph of $G^*$, $K \cap \Omega(G)$ forms a clique in $H^*$.
By Theorem~\ref{thm:lco}, $|K \cap \Omega(G)| \le \omega(H) + i(G)$.
By the hypothesis, $|K \cap \Omega(G)| \le m$.
Since $|K| = m+n$, $|K \setminus \Omega(G)| \ge n$.
By the definition of local chordalization and Remark~\ref{rmk:local}, the end vertices of each of the edges newly added to obtain $G^*$ belong to $\Omega(G)$, so $K \setminus \Omega(G)$ still forms a clique in $G$ and each vertex in $K \cap \Omega(G)$ is adjacent to each vertex in $K \setminus \Omega(G)$ in $G$.
By moving $m-|K \cap \Omega(G)|$ vertices in $K \setminus \Omega(G)$ into $K \cap \Omega(G)$ if $|K \cap \Omega(G)| < m$, we may claim that $G$ contains $I_m \vee K_n$ as a subgraph.
This contradicts the hypothesis, so we conclude that $\omega(G^*) < m+n$.
\end{proof}

\noindent
The following corollary is an immediate consequence of Theorem~\ref{kmnfree}.

\begin{Cor}\label{cor:kmnfree}
Suppose a graph $G$ does not contain $I_m \vee K_n$ for positive integers $m \ge n$ as a subgraph and  $\omega(G[\Omega(G)]) + i(G) \le m$.
Then $\chi_{DP}(G) < m+n$.
\end{Cor}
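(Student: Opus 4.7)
\bigskip
\noindent\textit{Proof proposal for Corollary~\ref{cor:kmnfree}.}
The plan is to split into the chordal and non-chordal cases and in each case reduce bounding $\chi_{DP}(G)$ to bounding the clique number of a chordal graph via ($\S$).

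First, I would handle the trivial case in which $G$ is chordal. Then $\Omega(G) = \emptyset$ and $i(G)=0$, so the hypothesis $\omega(G[\Omega(G)]) + i(G) \le m$ is automatic. Since $I_m \vee K_n$ is a spanning subgraph of $K_{m+n}$ (each vertex of the $I_m$ part is joined to every vertex of the $K_n$ part, and the $K_n$ part is complete), the hypothesis that $G$ does not contain $I_m \vee K_n$ implies in particular that $G$ contains no $K_{m+n}$, whence $\omega(G) \le m+n-1$. By ($\S$) applied to the chordal graph $G$, $\chi_{DP}(G) = \omega(G) < m+n$, as required.

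Next, I would handle the non-chordal case by invoking Theorem~\ref{kmnfree} directly. Its hypotheses are exactly the hypotheses of the corollary, so it produces a chordal completion $G^{*}$ of $G$ with $\omega(G^{*}) < m+n$. Since $G$ is a spanning subgraph of $G^{*}$, any proper DP-coloring of $G^{*}$ restricts to one of $G$, so
\[
\chi_{DP}(G) \le \chi_{DP}(G^{*}).
\]
Because $G^{*}$ is chordal, ($\S$) yields $\chi_{DP}(G^{*}) = \omega(G^{*})$. Combining these gives
\[
\chi_{DP}(G) \le \chi_{DP}(G^{*}) = \omega(G^{*}) < m+n,
\]
which is the claimed bound.

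Since the only substantive input, Theorem~\ref{kmnfree}, has already been proved in the preceding passage, there is really no technical obstacle to overcome here; the main point is bookkeeping the chordal case separately (where Theorem~\ref{kmnfree} is not needed because $i(G)=0$ prevents us from producing $G^{*}$ through its machinery) and then invoking ($\S$) twice. The corollary is therefore genuinely immediate from Theorem~\ref{kmnfree} combined with the chordal-graph identity $\chi_{DP} = \omega$.
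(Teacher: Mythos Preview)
Your proposal is correct and follows essentially the paper's approach: the paper simply states that the corollary is an immediate consequence of Theorem~\ref{kmnfree}, and your non-chordal case is precisely that deduction via $\chi_{DP}(G) \le \chi_{DP}(G^{*}) = \omega(G^{*}) < m+n$. Your separate treatment of the chordal case (which Theorem~\ref{kmnfree} formally excludes) is a welcome bit of extra care that the paper glosses over; the argument that excluding $I_m \vee K_n$ forces $\omega(G) < m+n$, together with ($\S$), handles it cleanly.
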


\begin{Rem}
Since $K_{2,4}$ is non-chordal and has a hole cover which is a singleton, $i(K_{2,4}) = 1$.
Then, by Theorem~\ref{thm:coloring2}, $\chi_{DP}(K_{2,4}) \le 3$.
Yet, $\chi_{DP}(K_{2,4}) \le 5$ by Corollary~\ref{cor:kmnfree}.
Thus, for $\chi_{DP}(K_{2,4})$, Theorem~\ref{thm:coloring2} gives a better upper bound than Corollary~\ref{cor:kmnfree}.

On the other hand, for a certain graph $G$,  Corollary~\ref{cor:kmnfree} gives a better upper bound of $\chi_{DP}(G)$  than Theorem~\ref{thm:coloring2}.
To see why, consider the graph $G$ given in Figure~\ref{fig:complex}.
If $G$ contained a subgraph isomorphic to $I_8 \vee K_4$, then $G$ would have at least four vertices with degree at least $11$, which does not happen in $G$ as the two vertices common to $K_6$ and $K_{11}$ are the only vertices with degree at least $11$.
Hence $G$ does not contain $I_8 \vee K_4$ as a subgraph.

It is easy to check that $\omega(G)=11$ and $\omega(G[\Omega(G)])=5$.
The graph $G[\Omega(G)]$ is represented by using bold edges in Figure~\ref{fig:complex} and happens to be the graph given in Figure~\ref{fig:index}.
Therefore $i(G) = 2$.
Then Theorem~\ref{thm:coloring2} gives rise to $\chi_{DP}(G) \le 11+2$ while Corollary~\ref{cor:kmnfree} gives rise to $\chi_{DP}(G) \le 11$.
Furthermore, since $\omega(G)=11$, $\chi_{DP}(G)$ is actually equal to $11$.

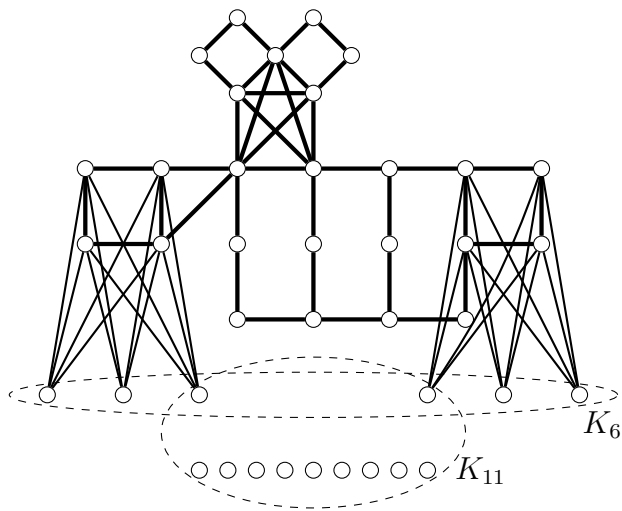
\begin{figure}[h]
\begin{center}

\begin{tikzpicture}[x=1cm, y=1cm]

    \vertex (a1) at (0,0) [label=above:$$]{};
    \vertex (a2) at (0,-1) [label=above:$$]{};
    \vertex (a3) at (1,-1) [label=above:$$]{};
    \vertex (a4) at (1,0) [label=above:$$]{};

    \vertex (b1) at (2,0) [label=above:$$]{};
    \vertex (b2) at (2,-1) [label=above:$$]{};
    \vertex (b3) at (2,-2) [label=above:$$]{};

    \vertex (b4) at (3,0) [label=above:$$]{};
    \vertex (b5) at (3,-1) [label=above:$$]{};
    \vertex (b6) at (3,-2) [label=above:$$]{};

    \vertex (b7) at (4,0) [label=above:$$]{};
    \vertex (b8) at (4,-1) [label=above:$$]{};
    \vertex (b9) at (4,-2) [label=above:$$]{};

    \vertex (b10) at (5,0) [label=above:$$]{};
    \vertex (b11) at (5,-1) [label=above:$$]{};
    \vertex (b12) at (5,-2) [label=above:$$]{};

    \vertex (b13) at (6,0) [label=above:$$]{};
    \vertex (b14) at (6,-1) [label=above:$$]{};

    \vertex (c1) at (2,1) [label=above:$$]{};
    \vertex (c2) at (1.5,1.5) [label=above:$$]{};
    \vertex (c3) at (2,2) [label=above:$$]{};
    \vertex (c4) at (2.5,1.5) [label=above:$$]{};
    \vertex (c5) at (3,2) [label=above:$$]{};
    \vertex (c6) at (3.5,1.5) [label=above:$$]{};
    \vertex (c7) at (3,1) [label=above:$$]{};

    \vertex (d1) at (-0.5,-3) [label=above:$$]{};
    \vertex (d2) at (0.5,-3) [label=above:$$]{};
    \vertex (d3) at (1.5,-3) [label=above:$$]{};
    \vertex (d4) at (4.5,-3) [label=above:$$]{};
    \vertex (d5) at (5.5,-3) [label=above:$$]{};
    \vertex (d6) at (6.5,-3) [label=above:$$]{};

    \vertex (e1) at (1.5,-4) [label=above:$$]{};
    \vertex (e2) at (1.5+1*3/8,-4) [label=above:$$]{};
    \vertex (e3) at (1.5+2*3/8,-4) [label=above:$$]{};
    \vertex (e4) at (1.5+3*3/8,-4) [label=above:$$]{};
    \vertex (e5) at (1.5+4*3/8,-4) [label=above:$$]{};
    \vertex (e6) at (1.5+5*3/8,-4) [label=above:$$]{};
    \vertex (e7) at (1.5+6*3/8,-4) [label=above:$$]{};
    \vertex (e8) at (1.5+7*3/8,-4) [label=above:$$]{};
    \vertex (e9) at (1.5+8*3/8,-4) [label=above:$$]{};

    \draw[dashed] (3,-3) ellipse (4 and 0.3);
    \draw (6.8,-3.4) node{$K_6$};
    \draw[dashed] (3,-3.5) ellipse (2 and 1);
    \draw (5.2,-4) node{$K_{11}$};

    \path
    (a1) edge [-,thick] (a2)
    (a2) edge [-,thick] (a3)
    (a3) edge [-,thick] (a4)
    (a4) edge [-,thick] (a1)

    (b1) edge [-,thick] (a3)
    (b1) edge [-,thick] (a4)

    (b1) edge [-,thick] (b2)
    (b2) edge [-,thick] (b3)

    (b4) edge [-,thick] (b5)
    (b5) edge [-,thick] (b6)

    (b7) edge [-,thick] (b8)
    (b8) edge [-,thick] (b9)

    (b10) edge [-,thick] (b11)
    (b11) edge [-,thick] (b12)

    (b13) edge [-,thick] (b14)

    (b1) edge [-,thick] (b4)
    (b4) edge [-,thick] (b7)
    (b7) edge [-,thick] (b10)
    (b10) edge [-,thick] (b13)

    (b11) edge [-,thick] (b14)

    (b3) edge [-,thick] (b6)
    (b6) edge [-,thick] (b9)
    (b9) edge [-,thick] (b12)

    (c1) edge [-,thick] (b1)
    (c1) edge [-,thick] (b4)
    (c4) edge [-,thick] (b1)
    (c4) edge [-,thick] (b4)
    (c7) edge [-,thick] (b1)
    (c7) edge [-,thick] (b4)

    (c1) edge [-,thick] (c2)
    (c2) edge [-,thick] (c3)
    (c3) edge [-,thick] (c4)
    (c4) edge [-,thick] (c1)
    (c4) edge [-,thick] (c5)
    (c5) edge [-,thick] (c6)
    (c6) edge [-,thick] (c7)
    (c7) edge [-,thick] (c1)
    (c7) edge [-,thick] (c4)


    (a1) edge [-,thick] (d1)
    (a2) edge [-,thick] (d1)
    (a3) edge [-,thick] (d1)
    (a4) edge [-,thick] (d1)
    (a1) edge [-,thick] (d2)
    (a2) edge [-,thick] (d2)
    (a3) edge [-,thick] (d2)
    (a4) edge [-,thick] (d2)
    (a1) edge [-,thick] (d3)
    (a2) edge [-,thick] (d3)
    (a3) edge [-,thick] (d3)
    (a4) edge [-,thick] (d3)

    (b10) edge [-,thick] (d4)
    (b11) edge [-,thick] (d4)
    (b13) edge [-,bend left=7,thick] (d4)
    (b14) edge [-,thick] (d4)
    (b10) edge [-,thick] (d5)
    (b11) edge [-,thick] (d5)
    (b13) edge [-,thick] (d5)
    (b14) edge [-,thick] (d5)
    (b10) edge [-,thick] (d6)
    (b11) edge [-,thick] (d6)
    (b13) edge [-,thick] (d6)
    (b14) edge [-,thick] (d6)

    (a1) edge [-,ultra thick] (a2)
    (a2) edge [-,ultra thick] (a3)
    (a3) edge [-,ultra thick] (a4)
    (a4) edge [-,ultra thick] (a1)

    (b1) edge [-,ultra thick] (a3)
    (b1) edge [-,ultra thick] (a4)

    (b1) edge [-,ultra thick] (b2)
    (b2) edge [-,ultra thick] (b3)

    (b4) edge [-,ultra thick] (b5)
    (b5) edge [-,ultra thick] (b6)

    (b7) edge [-,ultra thick] (b8)
    (b8) edge [-,ultra thick] (b9)

    (b10) edge [-,ultra thick] (b11)
    (b11) edge [-,ultra thick] (b12)

    (b13) edge [-,ultra thick] (b14)

    (b1) edge [-,ultra thick] (b4)
    (b4) edge [-,ultra thick] (b7)
    (b7) edge [-,ultra thick] (b10)
    (b10) edge [-,ultra thick] (b13)

    (b11) edge [-,ultra thick] (b14)

    (b3) edge [-,ultra thick] (b6)
    (b6) edge [-,ultra thick] (b9)
    (b9) edge [-,ultra thick] (b12)

    (c1) edge [-,ultra thick] (b1)
    (c1) edge [-,ultra thick] (b4)
    (c4) edge [-,ultra thick] (b1)
    (c4) edge [-,ultra thick] (b4)
    (c7) edge [-,ultra thick] (b1)
    (c7) edge [-,ultra thick] (b4)

    (c1) edge [-,ultra thick] (c2)
    (c2) edge [-,ultra thick] (c3)
    (c3) edge [-,ultra thick] (c4)
    (c4) edge [-,ultra thick] (c1)
    (c4) edge [-,ultra thick] (c5)
    (c5) edge [-,ultra thick] (c6)
    (c6) edge [-,ultra thick] (c7)
    (c7) edge [-,ultra thick] (c1)
    (c7) edge [-,ultra thick] (c4)

	;

\end{tikzpicture}
\end{center}
\caption{A graph $G$ which shows that Theorem~\ref{kmnfree} may be regarded as an improvement of Theorem~\ref{thm:lco}.
The vertices enclosed by a dotted ellipse form a clique.
}
\label{fig:complex}
\end{figure}

\end{Rem}

\section{New $\chi$-bounded classes}

A class $\FFF$ of graphs is said to be \emph{$\chi$-bounded} if there exists a function $f : \NN \to \RR$ such that for every graph $G \in \FFF$ and every induced subgraph $H$ of $G$,
$\chi(H) \le f(\omega(H))$.

We may extend the notion of $\chi$-boundedness as follows.
A class $\FFF$ of graphs is said to be \emph{$\chi_l$-bounded} (resp.\ \emph{$\chi_{DP}$-bounded}) if there exists a function $f : \NN \to \RR$ such that for every graph $G \in \FFF$ and every induced subgraph $H$ of $G$,
$\chi_l(H) \le f(\omega(H))$ (resp.\ $\chi_{DP}(H) \le f(\omega(H))$.

A graph $G$ is called \emph{perfect graph} if $\chi(H) = \omega(H)$ for every induced subgraph $H$ of $G$.

We may also extend the notion of perfect graph as follows.
We say that a graph $G$ is \emph{list-perfect} (resp.\ \emph{DP-perfect}) if $\chi_l(H)=\omega(H)$ (resp.\ $\chi_{DP}(H)=\omega(H)$) for every induced subgraph $H$ of $G$.

We denote the class of perfect graphs, the class of list-perfect graphs, and the class of DP-perfect graphs by $\PPP$, $\PPP_l$, and $\PPP_{DP}$, respectively.

By \eqref{eqn:chi,l,dp}, a $\chi_{DP}$-bounded graph class is $\chi_l$-bounded and a $\chi_l$-bounded graph class is $\chi$-bounded.
In the proof of Theorem~\ref{thm:omega+i is sharp}, we have shown that for any positive integer $s$ and any nonnegative integer $t$, there exist a complete multipartite graph $G$ with $\omega(G)=s+1$ and $\chi_l(G)=s+t+1$, which implies that the class of complete multipartite graphs is not $\chi_l$-bounded.
Any complete multipartite graph is, however, perfect, which implies that the class of complete multipartite graphs is $\chi$-bounded.
Accordingly, a $\chi$-bounded class is not necessarily $\chi_l$-bounded.
Furthermore, $\PPP_{DP} \subset \PPP_l \subset \PPP$ by \eqref{eqn:chi,l,dp}.
Yet, $\PPP_{DP} \subsetneq \PPP_l \subsetneq \PPP$ as $K_{2, 4}$ is perfect but not list-perfect and $C_4$ is list-perfect but not DP-perfect.

Note that $\omega(C_n)=2$ and $\chi_{DP}(C_n)=3$ for even integer $n \ge 4$.
Thus no graph in $\PPP_{DP}$ contains a hole of even length.
Since a graph containing a hole of odd length is not perfect, no graph in $\PPP_{DP}$ contains a hole of odd length.
Therefore $\PPP_{DP}$ is included in the class of chordal graphs.
Thus, by ($\S$), $\PPP_{DP}$ is the class of chordal graphs.

%

Now we present new $\chi$-bounded classes.

\begin{Thm}\label{thm:chibound}
A family of graphs the non-chordality index of each of which does not exceed $k$ for some nonnegative integer $k$ is $\chi_{DP}$-bounded.
\end{Thm}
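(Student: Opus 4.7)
The plan is to take $f(n) = n + k$ and verify that every induced subgraph $H$ of every graph $G$ in the family satisfies $\chi_{DP}(H) \le \omega(H) + k$. This will exhibit the family as $\chi_{DP}$-bounded with bounding function $f$.

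Fix $G$ in the family (so $i(G) \le k$) and an induced subgraph $H$ of $G$. The first step is to produce a chordal completion of $H$ whose clique number is controlled by $\omega(H)$. Choose a local chordalization partition $\tilde{\CCC} = (\CCC_1, \ldots, \CCC_{i(G)})$ of a hole cover of $G$ that realizes $i(G)$, and let $G^*$ be the last graph of the corresponding chordalization chain. Set $H^* := G^*[V(H)]$. Theorem~\ref{thm:lco} then gives
\[
\omega(H^*) \le \omega(H) + i(G) \le \omega(H) + k.
\]

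Next, I would verify that $H^*$ is a chordal completion of $H$. Since $G^*$ is chordal, the induced subgraph $H^*$ is chordal as well; since $V(H^*) = V(H)$ and $H$ is an induced subgraph of $G$ with $G \subseteq G^*$, the graph $H^*$ is a spanning supergraph of $H$. Thus by observation~($\S$) applied to the chordal graph $H^*$, $\chi_{DP}(H^*) = \omega(H^*)$, while edge-monotonicity of $\chi_{DP}$ (used in the same manner as in the proofs of Theorems~\ref{thm:coloring1} and \ref{thm:coloring2}) yields $\chi_{DP}(H) \le \chi_{DP}(H^*)$. Chaining the inequalities,
\[
\chi_{DP}(H) \le \chi_{DP}(H^*) = \omega(H^*) \le \omega(H) + k,
\]
so $f(n) = n + k$ is a $\chi_{DP}$-bounding function for the family.

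I do not foresee a genuine obstacle: the theorem is essentially a packaging of Theorem~\ref{thm:lco}, which controls $\omega(H^*) - \omega(H)$ uniformly for all induced subgraphs $H$ of $G$, together with ($\S$), which collapses $\chi_{DP}$ to $\omega$ on chordal graphs. The crucial feature is that one does not need to know anything about $i(H)$ itself, only about $i(G)$; this is exactly what makes the hypothesis $i(G) \le k$ (which need not be hereditary in any obvious way) strong enough to give a bound that is uniform in the whole family and a function of $\omega(H)$ alone.
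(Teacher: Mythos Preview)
Your proof is correct and follows essentially the same route as the paper: define $f(x)=x+k$, invoke Theorem~\ref{thm:lco} to obtain a chordal completion $H^*$ of any induced subgraph $H$ with $\omega(H^*)\le\omega(H)+i(G)\le\omega(H)+k$, and then use ($\S$) together with monotonicity of $\chi_{DP}$ to conclude. The only difference is cosmetic---you spell out explicitly why $H^*=G^*[V(H)]$ is a chordal spanning supergraph of $H$, whereas the paper absorbs this into the statement and proof of Theorem~\ref{thm:lco}.
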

\begin{proof}
Take a family $\FFF$ of graphs the non-chordality index of each of which
does not exceed $k$ for a nonnegative integer $k$.
Let $f : \NN \to \RR$ be a function defined by $f(x)= x+k$.
Take a graph $G$ in $\FFF$.
Then $i(G) \le k$.
Let $H$ be an induced subgraph of $G$.
By the first part of Theorem~\ref{thm:lco},
there exists a chordal completion $H^*$ of $H$ such that $\omega(H^*) \le \omega(H) + i(G)$.
Thus $\chi_{DP}(H) \le \chi_{DP}(H^*) = \omega(H^*)  \le \omega(H) + i(G) \le f(\omega(H))$.
Hence the theorem is true.
\end{proof}

\noindent
By Remark~\ref{rmk:ncproperty}, the following corollary is immediately true.

\begin{Cor}
The class of graphs with the NC property is $\chi_{DP}$-bounded.
\end{Cor}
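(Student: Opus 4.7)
The plan is to exhibit an explicit bounding function and reduce the statement directly to Theorem~\ref{thm:lco} together with the fundamental equality $(\S)$ for chordal graphs. Given the family $\FFF$ of graphs with non-chordality index at most $k$, I would define $f : \NN \to \RR$ by $f(x) = x + k$ and verify the $\chi_{DP}$-boundedness condition for this $f$.

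Concretely, I would pick an arbitrary $G \in \FFF$ and an arbitrary induced subgraph $H$ of $G$. The first step is to apply Theorem~\ref{thm:lco}: taking a local chordalization partition $\tilde{\CCC} = (\CCC_1, \ldots, \CCC_{i(G)})$ of a hole cover of $G$ and letting $G^*$ be the last graph in the corresponding chordalization chain, the subgraph $H^*$ of $G^*$ induced by $V(H)$ is a chordal completion of $H$ satisfying $\omega(H^*) \le \omega(H) + i(G)$. The second step is to observe that since $H^*$ is chordal, the chain of inequalities from $(\S)$ gives $\chi_{DP}(H^*) = \omega(H^*)$. Combining these with the monotonicity $\chi_{DP}(H) \le \chi_{DP}(H^*)$, which follows from $H$ being a spanning subgraph of $H^*$, yields
\[
\chi_{DP}(H) \le \chi_{DP}(H^*) = \omega(H^*) \le \omega(H) + i(G) \le \omega(H) + k = f(\omega(H)).
\]

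There is essentially no obstacle here: Theorem~\ref{thm:lco} was tailored precisely to deliver this kind of bound for every induced subgraph simultaneously, so the work of the proof has already been done upstream. The only thing to be careful about is that the relevant clique bound must hold for induced subgraphs $H$ and not merely for $G$ itself, which is exactly the ``induced subgraph $H$'' formulation of Theorem~\ref{thm:lco} rather than the ``especially'' clause. Once the right statement is cited, the argument is a one-line chain of inequalities.
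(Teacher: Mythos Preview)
Your argument is correct and is exactly the paper's proof of Theorem~\ref{thm:chibound}; the only missing link is that you never connect the NC property (the actual hypothesis of the Corollary) to the bounded non-chordality index hypothesis you use. By Remark~\ref{rmk:ncproperty}, a graph has the NC property if and only if $i(G)\le 1$, so specializing your argument to $k=1$ yields the Corollary --- which is precisely how the paper derives it.
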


\section{Acknowledgement}
The second and the third authors' research was supported by
the National Research Foundation of Korea(NRF) funded by the Korea government(MEST) (No.\ NRF-2017R1E1A1A03070489) and by the Korea government(MSIP)
(No.\ 2016R1A5A1008055).
The first author's research was supported by Basic Science Research Program through the National Research Foundation of Korea(NRF) funded by the Ministry of Education(NRF-2018R1D1A1B07049150).


\end{document}